\documentclass[11pt,a4paper,reqno]{amsart}
\usepackage{epsfig}
\usepackage{color}
\usepackage{amssymb,amsmath,amsthm,amstext,amsfonts}
\usepackage[normalem]{ulem}
\usepackage{amsmath,amscd,amstext,amsthm,amsfonts,latexsym}
\usepackage[colorlinks=true,linkcolor=blue, urlcolor=red, citecolor=blue, backref]{hyperref}	
%\usepackage[colorlinks=true,linkcolor=blue, urlcolor=red, citecolor=blue]{hyperref}	
% Links in the pdf. Backref option: each
%\usepackage[dvips]{graphicx}
%\usepackage{graphicx, graphics}
%\magnification= magstep1
%\parindent=1em
%\baselineskip 15pt \hsize=12.3cm \vsize=18.5cm
%\usepackage{amsmath,amssymb,amsthm}\pagestyle{myheadings}
%\usepackage{hyperref}
\usepackage{amsmath,amssymb,amsthm}\pagestyle{myheadings}
\usepackage{color}
\usepackage[active]{srcltx}
\usepackage{amssymb, amsmath}
\usepackage{graphicx}
\usepackage{float}
\usepackage[active]{srcltx}

%-------Packages---------
%\usepackage{amssymb,amsfonts}
\usepackage[all,arc]{xy}
\usepackage{enumerate}
\usepackage{mathrsfs}
%\usepackage[utf8]{inputenc}
%\usepackage{psfrag,euscript}
%\usepackage{graphicx}
%\usepackage{tikz}
%\usepackage{graphicx,color}
%\usepackage{pgfplots,caption}

%%%% pacotes de desenho

%\usetikzlibrary[arrows]
%\usetikzlibrary{arrows}
%\usepgflibrary[arrows]
%\usepgflibrary{arrows}
%\usetikzlibrary{automata}
%\usetikzlibrary[automata]

%
%\pgfplotsset{compat=1.9}

%
%--------Theorem Environments--------
%theoremstyle{plain} --- default
%\theoremstyle{remark}
\newtheorem{theorem}{Theorem}[section]
\newtheorem{corollary}[theorem]{Corollary}
\newtheorem{proposition}[theorem]{Proposition}
\newtheorem{lemma}[theorem]{Lemma}

\theoremstyle{definition}
\newtheorem{maintheorem}{Theorem}

\newtheorem{maincorollary}[maintheorem]{Corollary}

\newtheorem{definition}[theorem]{Definition}

\newtheorem{example}[theorem]{Example}

\newtheorem{remark}[theorem]{Remark}

%--------New comands--------

%\newcommand{\diam}{\text{diam}}
%\newcommand{\dist}{\text{dist}}

\newcommand{\htop}{h_{\topp}}

\newcommand{\vep}{\varepsilon}

\newcommand{\topp}{\operatorname{top}}

\newcommand{\Ptop}{P_{\topp}}

\newcommand{\cP}{\mathcal{P}}

\newcommand{\cE}{\mathcal{E}}

\newcommand{\cF}{\mathcal{F}}

\usepackage[normalem]{ulem}%$Para riscar: \sout{Riscado} ou\xout{Riscos Transversais}

\newcounter{main}

\makeatletter
\let\c@equation\c@theorem
\makeatother
\numberwithin{equation}{section}

\bibliographystyle{plain}

\setlength{\topmargin}{0cm}
\setlength{\headsep}{1.5cm}
\setlength{\textwidth}{16.cm}
\setlength{\textheight}{21.0cm}
\setlength{\headheight}{-1cm}
\setlength{\oddsidemargin}{0.0cm}
\setlength{\evensidemargin}{0.0cm}
\setlength{\footskip}{0cm}

\title[Entropy functions, variational principles and equilibrium states]{A Convex Analysis approach to Entropy functions, variational principles and equilibrium states}

\author[A. Bi\'s]{Andrzej Bi\'s}
\address{Faculty of Mathematics, {\L}\'od\'z University, Poland}
\email{andrzej.bis@wmii.uni.lodz.pl}

\author[M. Carvalho]{Maria Carvalho}
\address{CMUP \& Faculdade de Ci\^encias da Universidade do Porto, Portugal.}
\email{mpcarval@fc.up.pt}

\author[M. Mendes]{Miguel Mendes}
\address{CMUP \& Faculdade de Engenharia da Universidade do Porto, Portugal.}
\email{migmendx@fe.up.pt}

\author[P. Varandas]{Paulo Varandas}
\address{CMUP \& Departamento de Matem\'atica, Universidade Federal da Bahia, Salvador, Brazil.}
\email{paulo.varandas@ufba.br}

\keywords{Finitely additive set function; Pressure function; Variational principle; Equilibrium state; Phase transition; Transfer operator;
Sub-additive sequence; Linear cocycle; Lyapunov exponent; Singular value potential} 
\subjclass[2010]{Primary:
37D35, %: "Thermodynamic formalism, variational principles, equilibrium states for dynamical systems" (MSC2020)
28D20, %: "Entropy and other invariants" (MSC2020)
%37B40, %: "Topological entropy" (MSC2020)
26A51; % "Convexity of real functions in one variable, generalizations" (MSC2020)
Secondary:
%37C85, %: "Dynamics induced by group actions other than Z and R, and C" (MSC2020)
%43A05, %: "Measures on groups and semigroups, etc." (MSC2020)
37D25, %: "Nonuniformly hyperbolic systems (Lyapunov exponents, Pesin theory, etc.)" (MSC2020)
37C30, %: "Functional analytic techniques in dynamical systems; zeta functions, (Ruelle-Frobenius) transfer operators, etc." (MSC2020)
%37C45. %: "Dimension theory of smooth dynamical systems" (MSC2020)
}

\date{\today}

\begin{document}

\begin{abstract}
Using methods from Convex Analysis, for each generalized pressure function we define an upper semi-continuous affine entropy-like map, establish an abstract variational principle for both countably and finitely additive probability measures and prove that equilibrium states always exist. We show that this conceptual approach imparts a new insight on dynamical systems without a measure with maximal entropy, may be used to detect second-order phase transitions, prompts the study of finitely additive ground states for non-uniformly hyperbolic transformations and grants the existence of finitely additive Lyapunov equilibrium states for singular value potentials generated by linear cocycles over continuous self-maps.
\end{abstract}

\maketitle

\iffalse
\tiny
\tableofcontents
\normalsize
\fi

\section{Introduction}\label{se:introd}

\subsubsection*{\emph{\textbf{Classical thermodynamic formalism}}}

The modern theory of Dynamical Systems has its origins in the end of the nineteenth century with the pioneering work of Poincar\'e, who aimed at a complete description of the solutions of the differential equations modeling the three body problem in Celestial Mechanics. It is in the course of this investigation that Poincar\'{e} encounters the intriguing phenomenon of what was later named homoclinic tangencies. Several decades after this first surfacing of such complex dynamical behavior a common opinion had grown among researchers that geometric methods were insufficient to fully describe the asymptotic behavior of dynamical systems in general. In the meantime, some of the ideas from Statistical Mechanics (e.g. Boltzman ergodic hypothesis), dated from the previous century, were spreading around, and the main ergodic results of that period, namely von Neumann's and Birkhoff's ergodic theorems, were published. Yet, Kolmogorov's groundbreaking proposal made in the late 1950's of bringing both Probability and Entropy Theories into the realm of Dynamical Systems faced intrinsic difficulties, inasmuch as the latter domain of study had not yet been established as an independent domain of research.

The theory of uniformly hyperbolic dynamical systems, essentially born in the sixties and building over the existence of invariant foliations with exponential contracting and expanding behavior, brought forward both the models and the axiomatic framework which granted some understanding of the homoclinic behavior and support to the realization of Kolmogorov's insight. Meanwhile, the paradigmatic example of Smale's horseshoe made clear that subshifts of finite type could codify the hyperbolic dynamical systems, and that such coding was possible due to the existence of finite partitions with a Markov property. What's more, the finite Markov partitions for hyperbolic dynamical systems (both diffeomorphisms and flows) had a further benefit: they conveyed the thermodynamic formalism from Statistical Mechanics to Dynamical Systems, through keystone contributions by Sinai, Ruelle, Bowen, Ratner and Walters \cite{Bo, Bo-flows, Ra, Ru78, Si72, Wa}, among others.

The thermodynamic formalism for dynamical systems aims to prove the existence of invariant probability measures which maximize the topological pressure, besides reporting on their statistical properties. Such measures, called equilibrium states, are often Gibbs, and include as specific examples both probability measures absolutely continuous with respect to Lebesgue and measures of maximal entropy. For uniformly hyperbolic diffeomorphisms and flows, when restricted to a basic piece of the non-wandering set, equilibrium states exist and are unique for every H\"older continuous potential (cf.  \cite{Bo75,Ru78,Si72}). In the case of diffeomorphisms, the basic known strategy to prove this remarkable fact is to (semi-)conjugate the dynamics to a subshift of finite type, \emph{via} a finite Markov partition. Finer properties, including the analiticity of the pressure map and the relation between pressure, periodic orbits and dynamical zeta functions, were later addressed by Parry and Pollicott~\cite{PP90}.

\subsubsection*{\emph{\textbf{Non-uniform hyperbolicity and phase transitions}}}

The study of dynamical systems with weaker forms of hyperbolicity, such as partial hyperbolicity and dominated splittings, is still under development (cf. \cite{BDV}). In particular, an extension of the thermodynamic formalism beyond the scope of uniform hyperbolicity has been facing several intricacies. A very fruitful strategy to overcome some of them is the construction of induced and tower dynamics with hyperbolic behavior (cf. \cite{Pi,Sar13}). These induced maps can be well described by countable shifts, for which the thermodynamic formalism is more or less settled \cite{Sar15}, but introduces two new hindrances. On the one hand, not all invariant probability measures 
may be lifted to the induced tower dynamics (see \cite{BT} and references therein); on the other hand, equilibrium states for the tower dynamics may induce probability measures that are $\sigma$-finite on the phase space but this transfer process depends on the integrability of the return time function. While in the case of $C^\infty$-surface diffeomorphisms with positive entropy there are finitely many measures of maximal entropy (cf. \cite{BCS}), the previous two issues are among the reasons why the theory of thermodynamic formalism for non uniformly hyperbolic dynamical systems remains incomplete even for partially hyperbolic systems.

It is in this context that phase transitions (characterized either by the non-analiticity of the pressure map or by a discontinuity on the number of its equilibrium states) have been thoroughly studied. Phase transitions are reasonably well understood in the one-dimensional context (see \cite{CR19,IT,PR,PRS} and references therein) due to the recent tools to detect and characterize the sources of non-hyperbolic behavior. Beyond hyperbolicity even the existence of equilibrium states is far from being established, though the non-uniform hyperbolicity may be enough in special contexts, as happens in the case of Sinai billiard maps (cf. \cite{BD20}). Even so, Newhouse described in \cite{New89} sufficient conditions for the entropy map to be upper semi-continuous, which in particular ensure that $C^\infty$ maps of compact manifolds have equilibrium states for every continuous potential. In addition, a codification mechanism by symbolic extensions, and its existence for smooth dynamics, were also explored by Downarowicz and Newhouse in \cite{DN}. Nonetheless, one can ask under what general conditions do equilibrium states exist, seeing that there are $C^r$-surface diffeomorphisms, $1 \leqslant r < +\infty$, having no measure with maximal entropy (cf. \cite{Bu14}). One of the main goals in this paper is to prove that they always exist if one drops the requirement that they are countably additive.

\subsubsection*{\emph{\textbf{The use of Convex Analysis}}}

One of the fundamental mathematical tools used in Thermodynamics, Statistical Physics and Stochastic Finance is convexity. For instance, regarding the theory of lattice gases, Israel observes in \cite{Isr} that the pressure exists in the thermodynamic limit as a convex function in the space of interactions; and uses the pioneer work of Bishop and Phelps on Convex Analysis \cite{BiP} to construct equilibrium states, to prove that they satisfy Dobrushin-Lanford-Ruelle condition and to describe phase transitions. The aforesaid paper \cite{BiP} is also a cornerstone for several recent applications in Economics, namely on equilibrium theory, risk measures and stochastic finance (cf. \cite{FS-B} and references therein), where abstract variational principles appear associated to convex risk measures and penalty functions.

An important application of the Convex Analysis methods in the thermodynamic formalism within statistical mechanics is due to Israel and Phelps~\cite{IP} who, inspired by mathematical models in the classical theory of lattice gases \cite{Ru78}, investigated the differentiability, the tangent functionals and  the variational entropy (defined by means of the Legendre-Fenchel transform) in the case of generalized pressure functions acting on the space of affine real-valued continuous functions whose domain is a compact convex set. Apart from the natural convexity and continuity assumptions, and the fact that the pressure function acts on a set of affine maps, such pressure maps were assumed to also satisfy a strong positivity condition (cf. \cite[p.136]{IP}), a hypothesis which seldom occurs in the thermodynamic formalism context. Encouraged by the works of Israel and Phelps \cite{IP} and Lopes et al \cite{Lopes}, recent formulations of variational principles using Convex Analysis have appeared in  \cite{CER,CES,GKLM}, extending \cite{IP} to the dynamical context of shifts on Borel standard spaces and dynamical systems whose transfer operators have a spectral gap. Since the usual Kolmogorov-Sinai metric entropy is not adequate to the context of shifts on Borel standard spaces, these authors define a notion of entropy (which they also call variational entropy) for measures that are eigenvectors of a normalized transfer operator's dual, and prove, as a variational principle, that it coincides with the Legendre transform of the spectral radius of a suitable operator. Similar variational principles have been obtained in the context of weighted shift spaces acting on continuous or $L^1$-functions (see \cite{ABL2011,BaLe}). In both situations, it may happen that the pressure function, defined as the logarithm of the spectral radius of the corresponding transfer operator, even if well adapted to the transfer operator, is hardly related to the classical notion of topological pressure for dynamical systems.

In what follows, we will use Banach function spaces that contain non-continuous observable maps, which appear naturally in the context of piecewise smooth dynamical systems (see Subsection~\ref{sec:RPF-bounded}). We will focus on the space $B_m(X)$ of bounded measurable functions on a compact metric space, in which case the Convex Analysis dualities produce an entropy function on the space of finitely additive measures. In the seventies, Salomon Bochner wrote that, contrary to popular mathematical opinions, finitely additive measures were more interesting and perhaps more important than countable additive ones (see \cite{Mah}). Surely, finitely additive measures arise in Analysis, Statistics,  Measure Theory and Dynamical Systems (see \cite{BB, CER} and references therein), and are closely related to 
the Banach-Tarski paradox  (cf. \cite{Wag})
and the characterization of amenable groups (cf. \cite{Pat}). We will show in this paper that they can also be used to detect second-order phase transitions in the classical thermodynamic formalism (see Theorem~\ref{thm:main} and Subsection~\ref{sec:second-order}).

\subsubsection*{\emph{\textbf{Our main contributions}}}

We address the following problem: to find an abstract variational principle, valid for real-valued convex, monotone and translation invariant functions defined on a suitable Banach space of potentials, which is powerful enough to be applied to either the classical topological pressure for a continuous self-map or to the more recently defined notions of pressure for semigroup actions. Inspired by some recent applications of convex analysis to risk measures in stochastic finance \cite{FS-B} and by differentiability results on the classical topological pressure function \cite{Wa2}, we obtain a variational principle under great generality, which conveys an upper semi-continuous entropy-like map acting either on the space of finitely additive normalized set functions or on the space probability measures (cf. Theorem~\ref{thm:main}). Furthermore, we prove that these entropy-like functions, which are Fenchel-Legendre transforms of abstract pressure functions, are affine. This is a crucial property for both the study of differentiability properties of the pressure functions (see Theorem~\ref{thm:differentiable-functionals} and Corollary~\ref{thm:Gateaux}) and the characterization of the entropy-like function determined by the classical topological pressure, when restricted to the set of invariant probability measures, as the upper semi-continuous concave envelope of the measure-theoretic entropy (cf. Subsection~\ref{sse:star} for the precise definitions and Theorem~\ref{thm:second-main}).

In the special case of entropy functions defined over the space of probability measures, our contributions not only recover the classical variational principle for the topological pressure of continuous potentials but also improve the thermodynamic formalism of dynamics which do not admit equilibrium states, and allow us to address the more general family of bounded potentials (see Theorem~\ref{thm:second-main} for the precise statement). The latter turned out to be of special interest in order to consider the pressure functions linked to non-additive sequences of continuous potentials as singular value potentials associated to linear cocycles. Indeed, in this context we show that the pressure function of such non-additive sequences coincides with the pressure function of a single bounded observable, for which our results guarantee the existence of equilibrium states (see Theorem~\ref{cor:additive-2} and Subsection~\ref{subsec:cocycles} for more details).

The need to deal with finitely additive measures (respectively, probability measures) is a consequence of the duality theorem for bounded measurable functions (respectively, continuous functions with compact support) employed in the proof of Theorem~\ref{thm:main}. For finitely additive set functions there are known versions of Poincar\'e recurrence theorem \cite{BR}, Birkhoff ergodic theorem \cite{RamBET}, Central limit theorem \cite{Kar, RamCLT}, a  strong law of large numbers \cite{Chen1, Chen2} and a Radon-Nikodym theorem \cite{May}, besides information on the accumulation points of sequences of finitely additive set functions \cite{Nik}. Other useful properties of these set functions may be found in \cite{PS, Dub, Zub, CKK, Buf} and in the comprehensive book \cite{Tol}.

Finally, we observe that the abstract variational principle in Theorem~\ref{thm:main}  does not summon any dynamics, which makes it applicable in many contexts. In \cite{BCMV-2}, using several suitable notions of pressure, we have applied it to semigroup actions which are finitely generated by continuous maps, a setting which comprises free semigroups, countable sofic groups and groups endowed with a reference measure.

\section{Main results}\label{se:statements}

\subsection{Entropy functions for abstract pressure functions}

Let $(X,d)$ be a metric space and let $\mathfrak{B}$ stand for the $\sigma$-algebra of Borel subsets of $X$. Denote by ${\mathbf B}$ a Banach space over the field $\mathbb{R}$ equal to either
\begin{eqnarray}\label{eq:space-B}
B_m(X) &=& \big\{\varphi: \, X \,\to\, \mathbb{R} \,\,|\,\, \varphi \text{ is Borel measurable and bounded}\big\}\\
\text{ or } \,\,C_b(X) &=&  \big\{\varphi \in B_m(X) \,\,|\,\, \varphi \text{ is continuous}\big\}\nonumber\\
\text{ or else } \,\,C_c(X) &=& \big\{\varphi \in C_b(X) \,\,|\,\, \varphi \text{ has compact support}\big\} \nonumber
\end{eqnarray}
endowed with the norm $\|\varphi\|_\infty = \sup_{x \, \in \, X}\, |\varphi(x)|$. In what follows, $\mathcal{P}_a(X)$ will stand for the set of $\mathfrak{B}$-measurable, regular and normalized finitely additive set functions on
$X$, 
which we will simply call \emph{finitely additive probability measures}, with the total variation norm (cf. \cite[IV.2.15]{DS}). Recall that the total variation norm in $\cP_a(X)$ is given by
$$\|\mu - \nu\| = \sup\, \left\{\Big|\int \psi \, d\mu - \int \psi \, d\nu\Big| \,\colon \,\,\,\psi \in \mathbf B \,\text{ and }\, \|\psi\|_\infty \leqslant 1\right\}.$$
As the set of regular, finitely additive measures endowed with the total variation norm is a Banach space isometrically isomorphic to the topological dual of $C_b(X)$ (cf. \cite[Theorem~14.9]{AB06}),
the set $\mathcal{P}_a(X)$ is compact in the weak$^*$ topology as a consequence of the classical Banach-Alaoglu theorem.
For future use, we denote by $\cP(X) \subset \mathcal{P}_a(X)$ the set of Borel $\sigma$-additive probability measures on $X$ endowed with the weak$^*$ topology, and by $C(X)$ the space of real valued continuous maps whose domain is $X$.
Inspired by \cite{FS-B}, we introduce the following axiomatic definition of pressure function.

\begin{definition}\label{def:pressure-function}
A function $\Gamma: {\mathbf B} \to \mathbb{R}$ is called a \emph{pressure function} if it satisfies the following conditions:
\begin{enumerate}
\item[(C$_1$)] \emph{Monotonicity}: $\varphi \leqslant \psi \,\, \Rightarrow \,\, \Gamma(\varphi) \leqslant \Gamma(\psi) \quad \forall\,\varphi, \, \psi \in {\mathbf B}$.
\medskip
\item[(C$_2$)] \emph{Translation invariance}: $\Gamma(\varphi + c) = \Gamma(\varphi) + c \quad \forall \,\varphi \in {\mathbf B} \quad \forall \,c \in \mathbb{R}$.
\medskip
\item[(C$_3$)] \emph{Convexity}: $\Gamma(t \,\varphi + (1-t) \,\psi) \leqslant t\, \Gamma(\varphi) + (1-t)\, \Gamma(\psi) \quad \forall \,\varphi, \, \psi \in {\mathbf B} \quad \forall\, t \in \,[0,1]$.
\end{enumerate}
\end{definition}

We note that, in comparison to endomorphisms, pressure functions and the abstract framework of \cite{IP}, here we do not require the pressure function to preserve co-boundary type functions. This is crucial in \cite{BCMV-2}, where we deal with very abstract pressure functions associated to group and semigroup actions.

It is not hard to check that properties (C$_1$) and (C$_2$) imply that any pressure function is Lipschitz continuous, meaning that
$| \Gamma(\varphi)  - \Gamma(\psi) | \leqslant \|\varphi-\psi\|_\infty$ for every $\varphi,\, \psi\in {\mathbf B}$.

Our first result, inspired by \cite{FS-B}, establishes a general variational principle and the existence of finitely additive equilibrium states for any pressure function.

\begin{maintheorem}\label{thm:main} \emph{Let $\Gamma: {\mathbf B} \to \mathbb{R}$ be a pressure function. Then
\begin{equation}\label{eq:var}
\Gamma(\varphi) = \max_{\mu \, \in \, \mathcal{P}_a(X)}\, \left\{{\mathfrak h}(\mu) + \int \varphi \, d\mu \right\} \quad \quad \forall\, \varphi \in {\mathbf B}
\end{equation}
where
\begin{equation}\label{eq:def-cone}
{\mathfrak h}(\mu) = \inf_{\varphi \, \in \, \mathcal{A}_\Gamma}\, \left\{\int  \varphi \, d\mu\right\} \quad \quad \text{and} \quad \quad \mathcal{A}_\Gamma = \big\{\varphi \in {\mathbf B} :\, \Gamma(-\varphi) \leqslant 0 \big\}.
\end{equation}
Moreover, ${\mathfrak h}(\mu)$ is affine and upper semi-continuous; and if $\alpha: \mathcal{P}_a(X) \to \mathbb{R} \cup \{-\infty,\, +\infty\}$ is another function taking the role of ${\mathfrak h}$ in \eqref{eq:var}, then $\alpha \leqslant {\mathfrak h}$. In addition, one has
$${\mathfrak h}(\mu) = \inf_{\varphi \, \in \, {\mathbf B}}\, \left\{\Gamma(\varphi) - \int\varphi \, d\mu\right\} \quad \quad \forall \,\mu \in  \mathcal{P}_a(X).$$
If $X$ is locally compact and ${\mathbf B}=C_c(X)$ then the maximum in ~\eqref{eq:var} is attained in $\mathcal P(X)$.}
\end{maintheorem}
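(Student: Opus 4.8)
The plan is to treat \eqref{eq:var} as the assertion that $\Gamma$ equals its Fenchel--Moreau biconjugate, that the supremum is attained, and that the effective domain of the conjugate is exactly $\mathcal P_a(X)$; the identification of that domain is where (C$_1$) and (C$_2$) enter. I would first settle the inequality ``$\Gamma(\varphi)\geqslant{\mathfrak h}(\mu)+\int\varphi\,d\mu$'' and the two auxiliary formulas, which are essentially formal. Fixing $\varphi\in{\mathbf B}$ and $\mu\in\mathcal P_a(X)$, the function $\psi:=\Gamma(\varphi)-\varphi$ satisfies $\Gamma(-\psi)=\Gamma(\varphi-\Gamma(\varphi))=\Gamma(\varphi)-\Gamma(\varphi)=0$ by (C$_2$), hence $\psi\in\mathcal{A}_\Gamma$, and since $\mu$ is normalized $\int\psi\,d\mu=\Gamma(\varphi)-\int\varphi\,d\mu$; this gives ${\mathfrak h}(\mu)\leqslant\Gamma(\varphi)-\int\varphi\,d\mu$, that is ``$\geqslant$'' in \eqref{eq:var}, and also ${\mathfrak h}(\mu)\leqslant\inf_{\varphi\in{\mathbf B}}\{\Gamma(\varphi)-\int\varphi\,d\mu\}$. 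Conversely, for $\psi\in\mathcal{A}_\Gamma$ the choice $\varphi=-\psi$ yields $\Gamma(\varphi)-\int\varphi\,d\mu=\Gamma(-\psi)+\int\psi\,d\mu\leqslant\int\psi\,d\mu$, so, taking infima, ${\mathfrak h}(\mu)=\inf_{\varphi\in{\mathbf B}}\{\Gamma(\varphi)-\int\varphi\,d\mu\}$. The maximality statement then follows at once: any $\alpha$ playing the role of ${\mathfrak h}$ in \eqref{eq:var} obeys $\Gamma(\varphi)\geqslant\alpha(\mu)+\int\varphi\,d\mu$ for every $\varphi$, hence $\alpha(\mu)\leqslant\inf_{\varphi}\{\Gamma(\varphi)-\int\varphi\,d\mu\}={\mathfrak h}(\mu)$.

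For the reverse inequality in \eqref{eq:var} and the attainment of the maximum I would invoke convex duality. Since $\Gamma$ is real-valued, convex and (by the remark preceding the statement) Lipschitz, it is continuous and therefore subdifferentiable at every point of the Banach space ${\mathbf B}$: given $\varphi_0$ there is $\ell\in{\mathbf B}^\ast$ with $\Gamma(\psi)\geqslant\Gamma(\varphi_0)+\ell(\psi-\varphi_0)$ for all $\psi\in{\mathbf B}$. Substituting $\psi+c$ and using (C$_2$) forces $\ell(\mathbf 1)=1$, while substituting $\varphi_0-t\eta$ with $\eta\geqslant 0$, $t>0$ and using (C$_1$) forces $\ell(\eta)\geqslant 0$; thus $\ell$ is a positive normalized continuous functional. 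The duality identifying the dual of $C_b(X)$ with the regular finitely additive measures under total variation (\cite[Theorem~14.9]{AB06}) — or, when ${\mathbf B}=B_m(X)$, the direct definition $\mu_0(A):=\ell(\mathbf 1_A)$ together with uniform approximation by simple functions — then produces $\mu_0\in\mathcal P_a(X)$ with $\ell(\cdot)=\int\cdot\,d\mu_0$. The subgradient inequality becomes $\Gamma(\psi)-\int\psi\,d\mu_0\geqslant\Gamma(\varphi_0)-\int\varphi_0\,d\mu_0$ for all $\psi$, so the infimum computing ${\mathfrak h}(\mu_0)$ is attained at $\varphi_0$ and equals $\Gamma(\varphi_0)-\int\varphi_0\,d\mu_0$; hence ${\mathfrak h}(\mu_0)+\int\varphi_0\,d\mu_0=\Gamma(\varphi_0)$, which combined with the easy bound gives \eqref{eq:var} with the maximum realised at $\mu_0$.

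It then remains to establish the regularity properties of ${\mathfrak h}$ and the locally compact case. Upper semi-continuity and concavity of ${\mathfrak h}$ are immediate from the representation ${\mathfrak h}(\mu)=\inf_{\varphi\in{\mathbf B}}\{\Gamma(\varphi)-\int\varphi\,d\mu\}$ as an infimum of continuous affine functionals of $\mu$. The affinity of ${\mathfrak h}$, namely the reverse inequality ${\mathfrak h}(t\mu_1+(1-t)\mu_2)\leqslant t\,{\mathfrak h}(\mu_1)+(1-t)\,{\mathfrak h}(\mu_2)$, is the point I expect to be the main obstacle; the route I would try is to exploit the attainment just proved together with the description $\mathcal{A}_\Gamma=\{\varphi\in{\mathbf B}:\int\varphi\,d\nu\geqslant{\mathfrak h}(\nu)\ \ \forall\,\nu\in\mathcal P_a(X)\}$, selecting near-optimal potentials for $\mu_1$ and for $\mu_2$ and merging them through (C$_3$) into a single admissible potential. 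Finally, for ${\mathbf B}=C_c(X)$ with $X$ locally compact, the supporting functional is positive on $C_c(X)$ by (C$_1$), hence by the Riesz representation theorem it is integration against a finite Radon measure (its norm being at most $1$), and a normalization step — exhausting $X$ by compacta, in the spirit of the $C_b(X)$ argument — places that measure in $\mathcal P(X)$, so the maximum in \eqref{eq:var} is attained there.
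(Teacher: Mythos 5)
Your first two paragraphs are essentially correct and close in spirit to the paper: the identities ${\mathfrak h}(\mu)=\inf_{\varphi\in{\mathbf B}}\{\Gamma(\varphi)-\int\varphi\,d\mu\}$, the maximality of ${\mathfrak h}$, and the attainment of the maximum are derived correctly, and your existence step (subdifferentiability of the continuous convex $\Gamma$ at $\varphi_0$, with (C$_2$) forcing $\ell(\mathbf 1)=1$, (C$_1$) forcing positivity, and duality representing $\ell$ by an element of $\mathcal{P}_a(X)$) is a legitimate repackaging of the paper's Hahn--Banach argument, which instead separates $\{-\varphi\}$ from the open convex set $\{\psi:\Gamma(-\psi)<0\}$ and proves positivity and $L(\mathbf 1)>0$ in a separate lemma. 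Your version has the merit of exhibiting the maximizer directly as a tangent functional, which is precisely the identification the paper makes later in Theorem~\ref{thm:tangent-functionals}.

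The genuine gap is the affinity of ${\mathfrak h}$, which is part of the statement and which you leave as a ``route I would try''. That route cannot be completed as described: if $\varphi_1,\varphi_2\in\mathcal A_\Gamma$ are near-optimal for $\mu_1,\mu_2$, the only admissible potential that (C$_3$) hands you is $t\varphi_1+(1-t)\varphi_2$, and integrating it against $t\mu_1+(1-t)\mu_2$ produces the cross terms $\int\varphi_1\,d\mu_2$ and $\int\varphi_2\,d\mu_1$, over which you have no control; what (C$_1$)--(C$_3$) deliver by themselves is only concavity of ${\mathfrak h}$, which you already have for free from the inf-of-affine representation. Indeed no manipulation of the axioms alone can yield affinity: $\Gamma(\varphi)=\log\int e^{\varphi}\,d\nu$ is a pressure function in the sense of Definition~\ref{def:pressure-function}, and its conjugate ${\mathfrak h}(\mu)=\inf_{\varphi\in{\mathbf B}}\{\Gamma(\varphi)-\int\varphi\,d\mu\}$ is minus the relative entropy $-D(\mu\,\|\,\nu)$, which is strictly concave (already on a two-point space one computes ${\mathfrak h}(\delta_a)={\mathfrak h}(\delta_b)=-\log 2$ while ${\mathfrak h}\big(\tfrac12\delta_a+\tfrac12\delta_b\big)=0$). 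So whatever proves the affine clause must use input beyond (C$_1$)--(C$_3$). The paper's mechanism is entirely different from your sketch: it writes each $\mu\in\mathcal P_a(X)$ as a barycenter $\mu=\int_{E_a(X)}m\,d\mathbb{P}_\mu(m)$ over the extreme points via the Choquet representation, proves ${\mathfrak h}(\mu)=\int{\mathfrak h}(m)\,d\mathbb{P}_\mu(m)$ by exchanging the infimum over $\mathcal A_\Gamma$ with the barycentric integral (Lemma~\ref{le:Wa(ii)}), and then reads off affinity from the linearity and uniqueness of the decomposition. The crux is exactly that exchange of infimum and integral, which requires a directedness-type ingredient for the family $\big(m\mapsto\int\psi\,dm\big)_{\psi\in\mathcal A_\Gamma}$ and is not supplied by your merging idea; your instinct that affinity is the main obstacle is well founded, but your outline does not close it. (A smaller point: for ${\mathbf B}=C_c(X)$ with $X$ non-compact the constant function is not in ${\mathbf B}$, so the normalization $\ell(\mathbf 1)=1$ and your ``exhaust by compacta'' step need more care to exclude escape of mass; the paper is admittedly also terse there, deferring to Riesz--Markov.)
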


Some comments are in order. Theorem~\ref{thm:main} can be understood as a variant of the Fenchel-Moreau Theorem in Convex Analysis (see Theorem VI.5.3 in \cite{Ellis}). We stress that we do not request compactness of the space $X$, just local compactness when considering ${\mathbf B}=C_c(X)$ due to the use of the Riesz-Markov Theorem, and that the supremum in \eqref{eq:var} can be computed taking only extremal measures (cf. \eqref{eq:vp-extreme}). We also highlight the fact that \eqref{eq:def-cone} asserts that the function ${\mathfrak h}$ is computed by averaging potentials whose additive inverses have non-positive pressure.
Observe, in addition, that
${\mathfrak h}(\mu)\geqslant 0$ if and only if $\Gamma(\varphi) \geqslant \int\varphi \, d\mu \quad \forall\, \mu \in  \mathcal{P}_a(X)$, a condition which extends a characterization of invariant probability measures for maps with upper semi-continuous Kolmogorov-Sinai entropy (cf. \cite[Theorems 9.11 \& 9.12]{Wa}).

\begin{remark}\label{rmk:relation-pressures}
The entropy function $\mathfrak h=\mathfrak h_{\Gamma,\mathbf B}$ depends on both the pressure function $\Gamma$ and its domain. For instance, if $X$ is a compact metric space and
$\Gamma: B_m(X) \to \mathbb R$ is a pressure function then $\Gamma\left|_{C(X)} \right.: C(X) \to\mathbb R$ is a pressure function as well, though with a different domain; and, according to ~\eqref{eq:def-cone}, the corresponding functions $\mathfrak h$ are computed respectively by
$$\mathfrak h_{B_m(X)}(\mu)
= \inf \left\{ \int\varphi \, d\mu \colon \, \varphi \, \in \,B_m(X)\, \;\text{and}\;
 \Gamma(-\varphi) \leqslant 0\right\} \quad \forall \mu\in \cP_a(X)$$
and
$$\mathfrak h_{C(X)}(\mu)
= \inf \left\{\int\varphi \, d\mu \colon \, \varphi \, \in \,C(X)\, \;\text{and}\;
 \Gamma(-\varphi) \leqslant 0\right\}
 \qquad \forall \mu\in \cP(X).$$
In particular, as $\mathfrak h_{B_m(X)}$ is computed on the larger space of finitely additive measures, one has that $\mathfrak h_{B_m(X)}(\mu)  \;\leqslant\;   \mathfrak h_{C(X)}(\mu)$ for every $\mu\in \cP(X)$.
\end{remark}

\begin{remark}
It is straightforward to conclude that $\mu_0$ attains the minimum in \eqref{eq:def-cone} (that is, $\mathfrak{h}({\mu_0}) = \int \varphi_0 \, d\mu_0$ for some $\varphi_0 \in \mathcal{A}_\Gamma$) if and only if $\Gamma(-\varphi_0) = 0$ and $\mu_0$ is an equilibrium state of $-\varphi_0$.  For instance, the minimum is attained when $\mu_0$ is a measure of maximal entropy (in which case $\varphi_0 = \htop(f)$); or when $\mu_0$ is the Sinai-Ruelle-Bowen measure of a $C^2$ Axiom A attractor (in which case $\varphi_0 = -\log \text{Jac}(Df_{|E^u})$; cf. \cite{Bo75}).

\end{remark}

\subsection{Uniqueness of the equilibrium states}

The variational principle stated in Theorem~\ref{thm:main} ensures that there always exist normalized finitely additive measures for which the right-hand side of \eqref{eq:var} attains the supremum; that is, the set
$\cE_\varphi(\Gamma) =\Big\{\mu\in \cP_a(X) \colon \, \Gamma(\varphi)={\mathfrak h}(\mu) + \int \varphi \, d\mu  \Big\}$
is non-empty. This raises the subtle question of whether they are unique. To address this issue we consider functionals tangent to the pressure in our abstract framework.

Given a pressure function $\Gamma: {\mathbf B} \to \mathbb{R}$ and a potential $\varphi \in {\mathbf B}$, we say $\mu \in \cP_a(X)$ is a \emph{tangent functional to $\Gamma$ at $\varphi$} (also known as \emph{sub-differential}) if
\begin{equation}\label{eq:tf1}
\Gamma(\varphi+\psi) -\Gamma(\varphi) \geqslant \int \psi \, d\mu \qquad \forall \, \psi \in {\mathbf B}.
\end{equation}
As in \cite{Wa2}, the continuity of $\Gamma$ and the Hahn-Banach Theorem guarantee that the space $\mathcal T_\varphi(\Gamma)$
of tangent functionals to $\Gamma$ at $\varphi$ is non-empty for every $\varphi$; and it is easily seen to be a convex and weak$^*$ compact set. The use of the Hahn-Banach theorem makes that argument for the existence of such measures is not constructive. The next result states that the space of tangent functionals to $\Gamma$ at $\varphi \in {\mathbf B}$ coincides with the space of finitely additive probability measures attaining the maximum on \eqref{eq:var}, and that typical
potentials have a unique equilibrium state. More precisely:

\begin{maintheorem}\label{thm:tangent-functionals}
\emph{Let $\Gamma: {\mathbf B} \to \mathbb{R}$ be a {pressure function}. Then
$$\cE_\varphi(\Gamma)=\mathcal T_\varphi(\Gamma) \quad \quad \forall\,\varphi\in {\mathbf B}.$$
Moreover, if $\mathbf B=C_b(X)$ or $\mathbf  B=C_c(X)$, then there exists a residual subset $\mathfrak R \subset \mathbf B$ such that $\#\,\cE_\varphi(\Gamma)=1$ for every $\varphi\in \mathfrak R$.}
\end{maintheorem}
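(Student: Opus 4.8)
The plan is to prove the two assertions separately. For the equality $\cE_\varphi(\Gamma)=\mathcal T_\varphi(\Gamma)$, first I would unwind the definitions. If $\mu\in\mathcal T_\varphi(\Gamma)$, then applying \eqref{eq:tf1} with test functions of the form $\psi-\varphi$ (for arbitrary $\psi\in\mathbf B$) gives $\Gamma(\psi)-\Gamma(\varphi)\geqslant\int(\psi-\varphi)\,d\mu$, i.e. $\Gamma(\psi)-\int\psi\,d\mu\geqslant\Gamma(\varphi)-\int\varphi\,d\mu$ for all $\psi$. Taking the infimum over $\psi$ and using the last displayed formula for $\mathfrak h$ in Theorem~\ref{thm:main}, one gets $\mathfrak h(\mu)\geqslant\Gamma(\varphi)-\int\varphi\,d\mu$; combined with the variational inequality $\mathfrak h(\mu)+\int\varphi\,d\mu\leqslant\Gamma(\varphi)$ (which is immediate from \eqref{eq:var}), this forces $\mathfrak h(\mu)+\int\varphi\,d\mu=\Gamma(\varphi)$, so $\mu\in\cE_\varphi(\Gamma)$. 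Conversely, if $\mu\in\cE_\varphi(\Gamma)$ then $\mathfrak h(\mu)=\Gamma(\varphi)-\int\varphi\,d\mu$, and since $\mathfrak h(\mu)=\inf_{\psi}\{\Gamma(\psi)-\int\psi\,d\mu\}\leqslant\Gamma(\varphi+\psi)-\int(\varphi+\psi)\,d\mu$ for every $\psi$, rearranging yields $\Gamma(\varphi+\psi)-\Gamma(\varphi)\geqslant\int\psi\,d\mu$, i.e. $\mu\in\mathcal T_\varphi(\Gamma)$. This part is essentially a routine dual pairing once Theorem~\ref{thm:main} is available.

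For the genericity statement, the strategy is the classical Lanford--Ruelle / Israel argument: on a separable Banach space a continuous convex function is Gâteaux differentiable on a residual set, and at a point of Gâteaux differentiability the subdifferential is a singleton. Concretely, $\Gamma$ is convex and (by the Lipschitz remark preceding the theorem) continuous on $\mathbf B$; when $\mathbf B=C_b(X)$ or $C_c(X)$ the relevant separability is available under the standing hypotheses (here one uses that $X$ is a separable metric space, so these sup-norm spaces are separable). One then invokes the Mazur theorem (a continuous convex function on a separable Banach space has a residual set of points of Gâteaux differentiability) to produce a residual $\mathfrak R\subset\mathbf B$ on which $\Gamma$ is Gâteaux differentiable. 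At each $\varphi\in\mathfrak R$, the set $\mathcal T_\varphi(\Gamma)$ of tangent functionals reduces to a single element, namely the Gâteaux derivative; by the first part this is exactly $\cE_\varphi(\Gamma)$, so $\#\,\cE_\varphi(\Gamma)=1$.

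The main obstacle I anticipate is the separability issue: $C_b(X)$ with the sup norm is separable only for suitable $X$, and one must be careful about which hypotheses on $X$ are in force (metric, and presumably separable, perhaps compact or locally compact in the $C_c$ case), so that Mazur's theorem genuinely applies. A related subtlety is that Mazur's theorem gives Gâteaux, not Fréchet, differentiability, and one should check that Gâteaux differentiability already suffices to collapse the weak$^*$-compact convex set $\mathcal T_\varphi(\Gamma)$ to a point — which it does, since if two distinct measures $\mu_1,\mu_2$ were both tangent at $\varphi$, testing \eqref{eq:tf1} along a direction $\psi$ with $\int\psi\,d\mu_1\neq\int\psi\,d\mu_2$ would contradict the existence of the one-sided directional derivative $\lim_{t\to0^+}\frac1t(\Gamma(\varphi+t\psi)-\Gamma(\varphi))$ being equal to its two-sided counterpart. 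Everything else is bookkeeping with the formulas already established in Theorem~\ref{thm:main}.
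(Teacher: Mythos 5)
Your proposal is correct and follows essentially the same route as the paper: the identity $\cE_\varphi(\Gamma)=\mathcal T_\varphi(\Gamma)$ is obtained by the same rearrangement of the tangent inequality against the formula $\mathfrak h(\mu)=\inf_{\psi\in\mathbf B}\{\Gamma(\psi)-\int\psi\,d\mu\}$ from Theorem~\ref{thm:main}, and the residual uniqueness is deduced from Mazur's theorem on generic single-valuedness of the subdifferential of a continuous convex function on a separable space, exactly as in the paper (which cites \cite{Mazur} and \cite{Phe93}). The separability caveat you raise for $C_b(X)$ is a fair point, but the paper treats it no more carefully than you do.
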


The previous result extends the work of Walters \cite{Wa2} within the context of the classical topological pressure, for which the uniqueness of equilibrium states is tied in with the differentiability of the pressure.

A pressure function $\Gamma: \mathbf B \to \mathbb R$ is said to be \emph{locally affine at $\varphi \in \mathbf B$} if there exist a neighborhood $\mathcal{V}$ of $0$ in $\mathbf B$ and a unique $\mu_\varphi \in \cP_a(X)$ such that
\begin{equation}\label{eq:FD}
\Gamma(\varphi+\psi)-\Gamma(\varphi)=\int \psi \; d\mu_\varphi \quad \quad \forall \, \psi \in \mathcal{V}.
\end{equation}
In particular, $\mathcal T_\varphi(\Gamma) = \{\mu_\varphi\}$. If $\Gamma$ is locally affine at all elements of $\mathbf B$, then $\mu_\varphi$ does not depend on $\varphi$, and $\Gamma$ is said to be \emph{affine}.
We say that the pressure function
 $\Gamma: \mathbf B \to \mathbb R$ is \emph{Fr\'echet differentiable} at $\varphi \in \mathbf B$ if there exists a unique $\mu_\varphi \in \cP_a(X)$ such that
\begin{equation}\label{eq:Frechet}
\lim_{\psi \,\to \,0}\, \frac1{\|\psi\|_\infty}\,\, {\Big|\Gamma(\varphi+\psi)-\Gamma(\varphi) -\int \psi \; d\mu_\varphi\Big|}=0.
\end{equation}
It is known (cf. \cite[Theorem 6]{Wa2}) that the local affine property is equivalent to Fr\'echet differentiability when one considers the classical topological pressure function. We will show that the same statement for general pressure functions is also true.

\begin{maintheorem}\label{thm:differentiable-functionals}
\emph{Let $\Gamma: {\mathbf B} \to \mathbb{R}$ be a {pressure function}. The following assertions are equivalent:
\begin{enumerate}
\item[\emph{(a)}] $\Gamma$ is locally affine at $\varphi$.
\item[\emph{(b)}] There exists a unique tangent functional in $\mathcal T_\varphi(\Gamma)$ and
$$\lim_{\psi \,\to \,0} \,\sup\, \Big\{\|\mu - \mu_\varphi\| \colon \,\,\mu \in \mathcal T_{\varphi+\psi}(\Gamma)\Big\}= 0.$$
\item[\emph{(c)}] $\Gamma$ is Fr\'echet differentiable at $\varphi$.
\end{enumerate}}
\noindent \emph{Therefore, the following statements are mutually equivalent as well: $(\bar{a})$ $\Gamma$ is affine, $(\bar{b})$
$\bigcup_{\varphi\,\in\,\mathbf B} \,\mathcal T_\varphi(\Gamma)$ is a singleton, and $(\bar{c})$
$\Gamma$ is everywhere Fr\'echet differentiable.}
\end{maintheorem}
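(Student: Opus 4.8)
The plan is to prove the cycle of implications $(a)\Rightarrow(b)\Rightarrow(c)\Rightarrow(a)$ and then deduce the global equivalences. The implication $(a)\Rightarrow(b)$ is the quickest: if $\Gamma$ is locally affine at $\varphi$, with neighbourhood $\mathcal V$ and measure $\mu_\varphi$ as in \eqref{eq:FD}, then, shrinking $\mathcal V$ to a ball of half its radius and subtracting two instances of \eqref{eq:FD}, one obtains $\Gamma(\varphi+\psi+\eta)-\Gamma(\varphi+\psi)=\int\eta\,d\mu_\varphi$ for all small $\psi,\eta$. Hence any $\mu\in\mathcal T_{\varphi+\psi}(\Gamma)$ satisfies $\int\eta\,d\mu\leqslant\int\eta\,d\mu_\varphi$ for all small $\eta$, and replacing $\eta$ by $-\eta$ and rescaling forces $\mu=\mu_\varphi$; thus $\mathcal T_{\varphi+\psi}(\Gamma)=\{\mu_\varphi\}$ for all $\psi$ in a neighbourhood of $0$, so the tangent functional at $\varphi$ is unique and the supremum appearing in $(b)$ is eventually $0$.

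For $(b)\Rightarrow(c)$ I would run the standard sandwich estimate (in the spirit of \cite{Wa2}). Since $\mu_\varphi\in\mathcal T_\varphi(\Gamma)$ we have $\Gamma(\varphi+\psi)-\Gamma(\varphi)\geqslant\int\psi\,d\mu_\varphi$; and picking any $\mu_\psi\in\mathcal T_{\varphi+\psi}(\Gamma)$ — which is non-empty for every $\psi$, by the Hahn--Banach argument recalled after \eqref{eq:tf1} — and testing \eqref{eq:tf1} at $\varphi+\psi$ against $-\psi$ gives $\Gamma(\varphi+\psi)-\Gamma(\varphi)\leqslant\int\psi\,d\mu_\psi$. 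Subtracting,
$$0\;\leqslant\;\Gamma(\varphi+\psi)-\Gamma(\varphi)-\int\psi\,d\mu_\varphi\;\leqslant\;\int\psi\,d(\mu_\psi-\mu_\varphi)\;\leqslant\;\|\psi\|_\infty\,\sup\big\{\|\mu-\mu_\varphi\|:\mu\in\mathcal T_{\varphi+\psi}(\Gamma)\big\},$$
so dividing by $\|\psi\|_\infty$ and letting $\psi\to0$ the hypothesis in $(b)$ yields \eqref{eq:Frechet}. Uniqueness of the Fréchet derivative is automatic, because by convexity the difference quotients $t\mapsto\big(\Gamma(\varphi+t\eta)-\Gamma(\varphi)\big)/t$ are nondecreasing, so any Fréchet derivative belongs to $\mathcal T_\varphi(\Gamma)=\{\mu_\varphi\}$.

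I expect the main obstacle to be $(c)\Rightarrow(a)$. I would first upgrade $(c)$ to $(b)$: this is essentially Šmulian's criterion — Fréchet differentiability of a continuous convex function at a point is equivalent to single-valuedness and norm upper semi-continuity of its sub-differential there — and can be checked by running the estimate above in reverse, using the Fréchet remainder bound and optimising over the size of an auxiliary perturbation, so as to get $\sup\{\|\mu-\mu_\varphi\|:\mu\in\mathcal T_{\varphi+\psi}(\Gamma)\}\to0$. The delicate part is then to promote this continuity to the \emph{exact} local identity \eqref{eq:FD}. Here I would exploit the affineness and upper semi-continuity of the entropy map $\mathfrak h$ from Theorem~\ref{thm:main}, together with the identification $\cE_\varphi(\Gamma)=\mathcal T_\varphi(\Gamma)$ of Theorem~\ref{thm:tangent-functionals}: writing $\Gamma(\varphi+\psi)=\mathfrak h(\mu_\psi)+\int(\varphi+\psi)\,d\mu_\psi$ for $\mu_\psi\in\mathcal T_{\varphi+\psi}(\Gamma)$, and using $\mathfrak h(\mu_\psi)\to\mathfrak h(\mu_\varphi)$ and $\int(\varphi+\psi)\,d(\mu_\psi-\mu_\varphi)\to0$, one squeezes $\Gamma(\varphi+\psi)$ against $\Gamma(\varphi)+\int\psi\,d\mu_\varphi$ from below (via $\mu_\varphi\in\mathcal T_\varphi(\Gamma)$) and controls the remaining error from above through the monotonicity and translation invariance of $\Gamma$, which should force it to vanish on a neighbourhood of $0$ and hence give \eqref{eq:FD}. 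This is the step where the full structure of a pressure function — not merely convexity and continuity — has to be brought to bear.

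Finally, the global statement follows at once from the local one. $(\bar a)\Leftrightarrow(\bar c)$ is just $(a)\Leftrightarrow(c)$ applied at every $\varphi$, together with the observation that a function locally affine at each point of the connected space $\mathbf B$ is affine: two affine maps agreeing on a non-empty open set coincide, so the local affine pieces patch into one, and in particular $\mu_\varphi$ does not depend on $\varphi$. Then $(\bar a)\Rightarrow(\bar b)$ is immediate, since $\bigcup_{\varphi\in\mathbf B}\mathcal T_\varphi(\Gamma)$ reduces to the singleton $\{\mu_\varphi\}$; and $(\bar b)\Rightarrow(\bar a)$ because, writing $\{\mu\}=\bigcup_\varphi\mathcal T_\varphi(\Gamma)$ and using that each $\mathcal T_\varphi(\Gamma)$ is non-empty so that $\mathcal T_\varphi(\Gamma)=\{\mu\}$ for all $\varphi$, testing \eqref{eq:tf1} at $\varphi$ against $\psi$ and at $\varphi+\psi$ against $-\psi$ yields $\Gamma(\varphi+\psi)-\Gamma(\varphi)=\int\psi\,d\mu$ for every $\varphi,\psi\in\mathbf B$, i.e. $\Gamma$ is affine.
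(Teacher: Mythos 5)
Your arguments for $(a)\Rightarrow(b)$, $(b)\Rightarrow(c)$ and for the global equivalences $(\bar a)\Leftrightarrow(\bar b)\Leftrightarrow(\bar c)$ are correct and essentially the ones in the paper (the sandwich estimate in $(b)\Rightarrow(c)$ is the same computation, whether one quotes the identity $\mathcal T_{\varphi+\psi}(\Gamma)=\mathcal E_{\varphi+\psi}(\Gamma)$ or just tests \eqref{eq:tf1} at $\varphi+\psi$ against $-\psi$).

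The gap is in $(c)\Rightarrow(a)$, precisely the step you flag as delicate. What you actually establish there is only that the error $\Gamma(\varphi+\psi)-\Gamma(\varphi)-\int\psi\,d\mu_\varphi$ tends to $0$ as $\psi\to 0$ (indeed is $o(\|\psi\|_\infty)$): the convergences $\mathfrak h(\mu_\psi)\to\mathfrak h(\mu_\varphi)$ and $\int(\varphi+\psi)\,d(\mu_\psi-\mu_\varphi)\to 0$ only restate differentiability, and the claim that monotonicity and translation invariance ``should force it to vanish on a neighbourhood of $0$'' is not an argument --- for a general convex function, Fr\'echet differentiability at a point never implies local affineness (think of a smooth strictly convex function on $\mathbb R$), so some genuine rigidity must be invoked, and you never identify it. The mechanism in the paper is the following. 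By Lemma~\ref{le:Wa(ii)} the map $\mathfrak h$ is affine, so the variational principle can be computed over the extreme points $E_a(X)$ of $\cP_a(X)$, as in \eqref{eq:vp-extreme}. The Fr\'echet bound gives Lemma~\ref{le:Wa(i)}: any sequence $(\mu_n)$ with $\mathfrak h(\mu_n)+\int\varphi\,d\mu_n\to\Gamma(\varphi)$ converges to $\mu_\varphi$ in total variation. Since distinct extreme points of $\cP_a(X)$ lie at total-variation distance exactly $2$, an approximating sequence of extreme maximizers must be eventually constant; hence $\mu_\varphi$ is itself extreme and there is a definite gap $a>0$ between $\Gamma(\varphi)$ and $\sup\{\mathfrak h(\mu)+\int\varphi\,d\mu:\mu\in E_a(X),\,\mu\neq\mu_\varphi\}$ (Lemma~\ref{le:Wa(iii)}). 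Finally, the Lipschitz continuity of $\Gamma$ shows that for $\|\psi-\varphi\|_\infty<a/2$ this gap persists, so the only extreme measure attaining the supremum in \eqref{eq:vp-extreme} at $\psi$ is still $\mu_\varphi$, which yields the exact identity $\Gamma(\psi)=\mathfrak h(\mu_\varphi)+\int\psi\,d\mu_\varphi$ and hence \eqref{eq:FD}. It is this discrete separation of extreme points of $\cP_a(X)$ (together with affineness of $\mathfrak h$) that converts the asymptotic estimate into an identity on a whole neighbourhood; without it, your squeeze cannot produce local affineness. (Your appeal to \v{S}muljan's criterion to pass from $(c)$ to $(b)$ is fine in spirit, but note it concerns the full subdifferential in $\mathbf B^*$, so one must also check that every subgradient of a pressure function is positive and normalized, i.e.\ lies in $\cP_a(X)$; this follows from monotonicity and translation invariance, as in Lemma~\ref{le:positive}.)
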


As being affine is a rigid condition, the previous theorem also conveys the information that a pressure function $\Gamma$ is rarely everywhere Fr\'echet differentiable. Consequently, typical pressure functions either exhibit more than one tangent functional at some element of $\mathbf B$, or these do not vary continuously in the operator norm (see \cite{DE} for examples).
The previous discussion prompts us to consider the weaker notion of Gateaux differentiability. A pressure function $\Gamma: \mathbf B \to \mathbb R$ is said to be \emph{Gateaux differentiable at $\varphi \in \mathbf B$} if, for every $\psi \in \mathbf B$ the directional pressure map $t \in \mathbb R  \,\mapsto \,\Gamma(\varphi + t\psi)$ is differentiable, that is,
the limit
$$d\,\Gamma(\varphi)(\psi) \,:= \,\lim_{t\,\to \,0} \,\frac{1}{t} \,[\,\Gamma(\varphi + t\psi)-\Gamma(\varphi)\,]$$
exists and is finite for every $\psi \in \mathbf B$. Concerning real valued convex functions on Banach spaces, Walters proved in \cite[Corollary 2]{Wa2} a criterion for Gateaux differentiability.
In our setting, the corresponding statement reads as follows.

\begin{maincorollary}\label{thm:Gateaux}
\emph{Let $\Gamma: \mathbf B \to \mathbb{R}$ be a {pressure function}. Then $\Gamma$ is Gateaux differentiable at $\varphi$ if and only if  there exists a unique tangent functional in $\mathcal T_\varphi(\Gamma)$.}
\end{maincorollary}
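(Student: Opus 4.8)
The plan is to run the standard convex-analytic equivalence — for a continuous convex function on a Banach space, Gateaux differentiability at a point amounts to the subdifferential there being a single point — while taking care to keep the functionals involved inside $\cP_a(X)$. First I would record the elementary facts about one-sided directional derivatives. Since $\Gamma$ is convex and, by the observation following Definition~\ref{def:pressure-function}, $1$-Lipschitz, for each $\psi \in \mathbf B$ the difference quotients $t \mapsto \tfrac{1}{t}[\Gamma(\varphi + t\psi) - \Gamma(\varphi)]$ are non-decreasing on $(0, +\infty)$ and bounded in modulus by $\|\psi\|_\infty$; hence
$$d^+\Gamma(\varphi)(\psi) := \lim_{t \to 0^+} \tfrac{1}{t}[\Gamma(\varphi + t\psi) - \Gamma(\varphi)] = \inf_{t > 0} \tfrac{1}{t}[\Gamma(\varphi + t\psi) - \Gamma(\varphi)]$$
exists, and $\psi \mapsto d^+\Gamma(\varphi)(\psi)$ is positively homogeneous, subadditive, $1$-Lipschitz, and satisfies $d^+\Gamma(\varphi)(\psi) \leqslant \Gamma(\varphi + \psi) - \Gamma(\varphi)$. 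Two consequences will be used: (i) $\Gamma$ is Gateaux differentiable at $\varphi$ exactly when the always valid inequality $d^+\Gamma(\varphi)(\psi) + d^+\Gamma(\varphi)(-\psi) \geqslant 0$ is an equality for every $\psi$, equivalently when $d^+\Gamma(\varphi)$ is linear; and (ii) by \eqref{eq:tf1} together with the infimum formula above, $\mu \in \mathcal T_\varphi(\Gamma)$ if and only if $\int \psi \, d\mu \leqslant d^+\Gamma(\varphi)(\psi)$ for all $\psi \in \mathbf B$.

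For the implication ``Gateaux differentiable $\Rightarrow$ unique tangent functional'': if $\ell := d^+\Gamma(\varphi)$ is linear, then any $\mu \in \mathcal T_\varphi(\Gamma)$ obeys $\int \psi \, d\mu \leqslant \ell(\psi)$ for all $\psi$, and replacing $\psi$ by $-\psi$ yields the reverse inequality; thus $\int \psi \, d\mu = \ell(\psi)$ for every $\psi \in \mathbf B$. Since $\mathbf B$ separates the points of $\cP_a(X)$ (the total variation norm is defined through $\mathbf B$), the set $\mathcal T_\varphi(\Gamma)$ contains at most one element, and it is non-empty by the Hahn--Banach argument recalled before Theorem~\ref{thm:tangent-functionals}, so it is a singleton.

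For the converse I argue by contraposition. Suppose $\Gamma$ is not Gateaux differentiable at $\varphi$, and pick $\psi_0$ with $a := -d^+\Gamma(\varphi)(-\psi_0) < b := d^+\Gamma(\varphi)(\psi_0)$. On the line $\mathbb{R}\psi_0$ the linear map $s\psi_0 \mapsto sb$ is dominated by the sublinear functional $d^+\Gamma(\varphi)$ (check $s \geqslant 0$ and $s \leqslant 0$ separately, using $a \leqslant b$), so Hahn--Banach extends it to a linear $\ell_1 \leqslant d^+\Gamma(\varphi)$ on $\mathbf B$ with $\ell_1(\psi_0) = b$; similarly one obtains $\ell_2 \leqslant d^+\Gamma(\varphi)$ with $\ell_2(\psi_0) = a$. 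Each $\ell_i$ is $1$-Lipschitz (it lies between $\psi \mapsto -d^+\Gamma(\varphi)(-\psi)$ and $\psi \mapsto d^+\Gamma(\varphi)(\psi)$), is non-negative on non-negative functions by monotonicity (C$_1$), is normalized by translation invariance (C$_2$), and satisfies $\ell_i(\psi) \leqslant \Gamma(\varphi + \psi) - \Gamma(\varphi)$ for all $\psi$; identifying $\ell_i$ with the corresponding $\mu_i \in \cP_a(X)$ via the duality recalled in Section~\ref{se:statements} shows $\mu_i \in \mathcal T_\varphi(\Gamma)$, and then $\int \psi_0 \, d\mu_1 = b \neq a = \int \psi_0 \, d\mu_2$ gives $\#\,\mathcal T_\varphi(\Gamma) \geqslant 2$, as required.

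The step I expect to be most delicate is this last identification of the abstract extensions $\ell_i \in \mathbf B^{*}$ with genuine elements of $\cP_a(X)$: positivity and normalization follow at once from (C$_1$)–(C$_2$), and when $\mathbf B = C_b(X)$ or $\mathbf B = C_c(X)$ the representation is the cited isomorphism (respectively the Riesz--Markov theorem), but for $\mathbf B = B_m(X)$ one must still extract the required regularity of the associated finitely additive set function, which is precisely where the structural facts about $B_m(X)^{*}$ from Section~\ref{se:statements} enter. Everything else is routine manipulation of difference quotients of convex functions.
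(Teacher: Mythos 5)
Your proof is correct and is essentially the argument the paper has in mind: the paper gives no separate proof of this corollary, deferring to the standard convex-analysis criterion of Walters \cite[Corollary 2]{Wa2} (Gateaux differentiability of a continuous convex function at a point is equivalent to uniqueness of the subdifferential there), which is exactly what you reconstruct via one-sided directional derivatives and Hahn--Banach. Your extra care in checking that the Hahn--Banach extensions are positive and normalized, hence represented by elements of $\cP_a(X)$, mirrors the paper's own Lemma~\ref{le:positive} in the proof of Theorem~\ref{thm:main}, so the identification step you flag is handled at the same level of rigor as in the paper.
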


\subsection{Existence of equilibrium states}

The duality results in Theorem~\ref{thm:main} can also be used to obtain a new insight on the thermodynamic formalism of a continuous self-map on a compact metric space. Indeed, in this context we provide the following variational principle regarding the classical topological pressure (we refer the reader to Subsection~\ref{sse:theorem 5} for the definitions).

\begin{maintheorem}\label{thm:second-main}
\emph{Let $f: \,X \to X$ be a continuous transformation of a compact metric space $X$ with $\htop(f) < +\infty$. The upper semi-continuous entropy map ${\mathfrak h}_f:\,\mathcal{P}(X) \, \to \, \mathbb{R}$ given by
\begin{equation*}\label{eq:second-dual}
{\mathfrak h}_\mu(f) = \inf_{\varphi \, \in \, C(X)}\, \left\{\Ptop(f, \varphi) - \int \varphi \, d\mu\right\}\quad \quad \forall \, \mu \in \mathcal{P}(X)
\end{equation*}
satisfies:
\begin{enumerate}
\item[\emph{(a)}] $0 \,\,\leqslant \,\,  h_\mu(f) \,\,\leqslant \,\, 
{\mathfrak h}_\mu(f) \quad \forall \, \mu \in \mathcal{P}_f(X).$
\smallskip
\item[\emph{(b)}] For every continuous potential $\varphi: X\to\mathbb R$,
\begin{equation}\label{eq:newVPf}
\Ptop(f,\varphi) = \max_{\mu \,\in\, \mathcal{P}(X)} \,\Big\{{\mathfrak h}_\mu(f) + \int \varphi \, d\mu \Big\}	
\,=\, \max_{\mu \,\in \,\mathcal{P}_f(X)}\,\left\{{\mathfrak h}_\mu(f) + \int \varphi \, d\mu\right\}
\end{equation}
where $\mathcal{P}_f(X)$ denotes the space of $f$-invariant Borel probability measures on $X$ endowed with the weak$^*$ topology.
\smallskip
\item[\emph{(c)}] Every measure $\mu\in \cP(X)$ which attains the maximum \eqref{eq:newVPf} is $f$-invariant.
\smallskip
\item[\emph{(d)}] The restriction of ${\mathfrak h}_f$ to $\mathcal{P}_f(X)$ is the upper semi-continuous concave envelope of Kolmogorov-Sinai metric entropy.
\end{enumerate}}
\end{maintheorem}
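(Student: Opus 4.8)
The plan is to apply Theorem~\ref{thm:main} to the abstract pressure function $\Gamma:=\Ptop(f,\cdot)$ on $\mathbf B=C(X)$, and then to read off (a)--(d) using only two classical facts about the topological pressure: that it is monotone, translation invariant and convex (so it is a pressure function in the sense of Definition~\ref{def:pressure-function}, and real valued since $\htop(f)<+\infty$), and that it is invariant under continuous coboundaries, $\Ptop(f,\varphi+u\circ f-u)=\Ptop(f,\varphi)$ for $u\in C(X)$. As $X$ is compact we have $C(X)=C_b(X)=C_c(X)$ and $X$ is locally compact, so Theorem~\ref{thm:main} already yields that ${\mathfrak h}_f$ is affine and upper semi-continuous, that ${\mathfrak h}_\mu(f)=\inf_{\varphi\in C(X)}\{\Ptop(f,\varphi)-\int\varphi\,d\mu\}$, and that $\Ptop(f,\varphi)=\max_{\mu\in\mathcal P_a(X)}\{{\mathfrak h}_\mu(f)+\int\varphi\,d\mu\}$ with the maximum attained in $\mathcal P(X)$ through the Riesz--Markov step; this is the first equality in \eqref{eq:newVPf}.

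For (a) I would feed the classical variational principle $\Ptop(f,\varphi)=\sup_{\nu\in\mathcal P_f(X)}\{h_\nu(f)+\int\varphi\,d\nu\}$ into the formula for ${\mathfrak h}_\mu(f)$: for fixed $\mu\in\mathcal P_f(X)$ one has $\Ptop(f,\varphi)-\int\varphi\,d\mu\geq h_\mu(f)$ for every $\varphi$, hence ${\mathfrak h}_\mu(f)\geq h_\mu(f)\geq 0$. For (c) the key computation is the following: if $\mu\in\mathcal P(X)$ is not $f$-invariant, pick $u\in C(X)$ with $\delta:=\int(u\circ f-u)\,d\mu\neq 0$ and evaluate ${\mathfrak h}_\mu(f)=\inf_\psi\{\Ptop(f,\psi)-\int\psi\,d\mu\}$ along $\psi=t(u\circ f-u)$; coboundary invariance gives $\Ptop(f,t(u\circ f-u))=\htop(f)$, so ${\mathfrak h}_\mu(f)\leq\inf_{t\in\mathbb R}\{\htop(f)-t\delta\}=-\infty$. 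Thus ${\mathfrak h}_\mu(f)$ is finite only when $\mu$ is $f$-invariant, which is (c) and, in turn, gives (b): the maximiser over $\mathcal P_a(X)$ furnished by Theorem~\ref{thm:main} lies in $\mathcal P_f(X)$, so both maxima in \eqref{eq:newVPf} equal $\Ptop(f,\varphi)$ (one may also obtain the second equality directly, combining (a) with the classical variational principle and invoking upper semi-continuity of ${\mathfrak h}_f$ on the compact set $\mathcal P_f(X)$ for the attainment).

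The remaining item (d) needs genuine convex-analytic care. Write $E$ for the upper semi-continuous concave envelope of $\nu\mapsto h_\nu(f)$ on the compact convex set $\mathcal P_f(X)$; it is proper because $0\leq h_\nu(f)\leq\htop(f)<+\infty$ for all $\nu$. One inequality is free: by Theorem~\ref{thm:main} the restriction of ${\mathfrak h}_f$ to $\mathcal P_f(X)$ is upper semi-continuous and affine, hence concave, and it dominates $h_\cdot(f)$ by (a), so it dominates $E$. For the reverse inequality I would fix $\mu\in\mathcal P_f(X)$ and $t>E(\mu)$ and separate the point $(\mu,t)$ from the closed convex hypograph of $E$ inside the locally convex space $(C(X)^*,\text{weak}^*)\times\mathbb R$; since the continuous dual of $(C(X)^*,\text{weak}^*)$ is $C(X)$, the separating functional is automatically of the form $\nu\mapsto\int\varphi\,d\nu+c$ with $\varphi\in C(X)$, yielding $\varphi\in C(X)$ and $\beta\in\mathbb R$ with $\int\varphi\,d\mu+t>\beta$ and $E(\nu)+\int\varphi\,d\nu\leq\beta$ for all $\nu$. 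As $E\geq h_\cdot(f)$, the latter forces $\Ptop(f,\varphi)\leq\beta$, whence ${\mathfrak h}_\mu(f)\leq\Ptop(f,\varphi)-\int\varphi\,d\mu\leq\beta-\int\varphi\,d\mu<t$; letting $t\downarrow E(\mu)$ gives ${\mathfrak h}_\mu(f)\leq E(\mu)$. This is precisely the Fenchel--Moreau biconjugation of $E$ relative to the duality between $C(X)$ and $\mathcal P_f(X)$, the same mechanism underlying Theorem~\ref{thm:main}.

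I expect the only conceptually non-routine step to be (c): it is where the dynamics actually enter, through coboundary invariance of the topological pressure, and it is what allows one to replace $\mathcal P(X)$ by $\mathcal P_f(X)$. The other point demanding attention is the bookkeeping in (d): the Hahn--Banach separation must be carried out in the ambient weak$^*$ dual, so that the supporting affine functionals for $E$ are exactly those represented by elements of $C(X)$ and the infimum defining ${\mathfrak h}_\mu(f)$ is the correct biconjugate — which is where local compactness of $X$ and the Riesz--Markov identification reappear.
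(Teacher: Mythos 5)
Your proposal is correct and follows essentially the same route as the paper: apply Theorem~\ref{thm:main} to $\Gamma=\Ptop(f,\cdot)$ on $C(X)$ (compactness giving $C(X)=C_c(X)$ and attainment of the maximum in $\mathcal P(X)$), obtain (a) from the classical variational principle, and prove (d) by the same Walters-type Hahn--Banach separation in $(C(X)^*,\mathrm{weak}^*)\times\mathbb R$ that the paper carries out in Proposition~\ref{prop:max-gamma} — your elided normalization of the separating functional is precisely the paper's verification that the coefficient $\gamma$ of the real coordinate is positive. The only real divergence is item (c): the paper (Corollary~\ref{cor:2dyn}) compares a maximizer against maximizers for $\varphi\pm(\psi\circ f-\psi)$ via coboundary invariance of $\Ptop$, whereas you show directly that ${\mathfrak h}_\mu(f)=-\infty$ for non-invariant $\mu$ (so it cannot attain the finite maximum) by feeding $t(u\circ f-u)$, $t\to+\infty$, into the defining infimum — an equally valid, slightly sharper, use of the same coboundary invariance.
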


The elements of the set
\begin{equation}\label{def:D}
\mathscr D \,=\, \big\{ \mu\in \cP_f(X) \colon 0\leqslant h_\mu(f) < {\mathfrak h}_\mu(f) \big\}
\end{equation}
are the $f$-invariant probability measures where the entropy function $\mu \in  \mathcal{P}_f(X) \,\mapsto \, h_\mu(f)$ fails to be upper semi-continuous. The classical variational principle together with Theorem~\ref{thm:second-main} (b) imply that
$$\sup_{\mu \,\in \,\mathcal{P}_f(X)}\,\left\{h_\mu(f) + \int \varphi \, d\mu\right\}
	\;\;=\;\; \sup_{\mu \,\in \,\mathcal{P}_f(X)}\,\left\{{\mathfrak h}_\mu(f) + \int \varphi \, d\mu\right\}$$
even if $\mathscr D\neq\emptyset$. This shows that the measures in $\mathscr D$ (which play a role for instance if the entropy spectrum $\{h_\mu(f)\colon \mu\in \cP_f(X)\}$ is not an interval) do not affect the supremum. Still, such measures have a say when we look for equilibrium states (see an example in Subsection~\ref{sec:non-standard}). More details on the relation between the star-entropy and the $\mathfrak h$-entropy are given in Proposition~\ref{prop:max-gamma}. Finally, it is worthwhile mentioning that one can obtain a similar statement to the one of Theorem~\ref{thm:second-main} using finitely additive measures instead of probability measures (see Remark~\ref{rmk:otherbound}).

\begin{remark}
When $\htop(f) < + \infty$ and, additionally, the entropy map $\mu \in  \mathcal{P}_f(X) \,\mapsto \, h_\mu(f)$ is upper semi-continuous (as happens, for instance, when $f$ is expansive), Theorem~\ref{thm:second-main} and \cite[Theorem 9.12]{Wa} imply that
${\mathfrak h}_\mu(f) = h_\mu(f)$ for every $\mu \in \mathcal{P}_f(X).$
Thus, under these assumptions, the map ${\mathfrak h}_f: \mathcal{P}(X) \to \mathbb R$ is an extension of the metric entropy (with negative values at $\mathcal{P}(X) \setminus \mathcal{P}_f(X)$).
\end{remark}

The previous results pave the way to the description of multifractal analysis for Birkhoff averages, large deviations or ergodic optimization in both hyperbolic and non-hyperbolic contexts. Actually, while upper semi-continuity of entropy and uniqueness of equilibrium states are useful ingredients to provide a full description of the entropy map, the dimension of the level sets associated to Birkhoff averages and the maximizing probability measures, these properties may fail beyond the realm of uniform hyperbolicity. We will give a simple illustration through an application in ergodic optimization (see Subsection~\ref{sec:erg-opt} for definitions and the concept of zero temperature limits).
Taking into account Theorem~\ref{thm:second-main} 
and \cite[Theorem~4.1]{Je} (replacing the usual entropy function by the upper semi-continuous map $\mu \mapsto {\mathfrak h}_\mu(f)$) we obtain the following information.

\begin{maincorollary}\label{thm:zerotemp}
\emph{Let $f: X \to X$ be a continuous map on a compact metric space $X$ such that $\htop(f)<+\infty$ and let $\varphi \colon X \to\mathbb R$ be a continuous observable. Then
$$\frac1t \Ptop(f,t\varphi) =\max_{\nu\,\in \,\cP_f(X)}\, \Big\{\frac1t \,{\mathfrak h}_\nu(f) + \int \varphi \, d\nu  \Big\} \quad \underset{t\,\to\, +\infty}{\longrightarrow} \quad \max_{\nu\,\in\, \cP_f(X)} \,\int \varphi\, d\nu.$$
Moreover, if for each $t>0$ one chooses an $f$-invariant probability measure $\nu_t \in \cP_f(X)$ satisfying $\Ptop(f,t\varphi) = {\mathfrak h}_{\nu_t}(f) + \int t\, \varphi \, d\nu_t$ then
any weak$^*$ accumulation point $\nu_\infty \in \cP_f(X)$ of $(\nu_t)_{t\, >\, 0}$ as $t\to+\infty$ satisfies:
\smallskip
\begin{enumerate}
\item[\emph{(a)}] $\int \varphi \, d\nu_\infty = \underset{\mu\,\in\, \cP_f(X)}{\max} \, \int \varphi\, d\mu$
\medskip
\item[\emph{(b)}]${\mathfrak h}_{\nu_\infty}(f)
	= \underset{\nu\,\in\, \mathcal{M}_f(X, \,\varphi)}{\max} \,\,{\mathfrak h}_\nu(f) \,\,
	\geqslant\,\, \underset{\nu\,\in\,\, \mathcal{M}_f(X, \,\varphi)}{\max}\, h_\nu(f)$
\medskip
\item[\emph{(c)}] $\underset{t\,\to\, +\infty}{\lim}\,\, {\mathfrak h}_{\nu_t}(f) = {\mathfrak h}_{\nu_\infty}(f)$
\end{enumerate}
where $\mathcal{M}_f(X, \varphi)$ stands for the set of $f$-invariant $\varphi$-maximizing probability measures obtained through zero-temperature limits.}
\end{maincorollary}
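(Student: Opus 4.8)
The plan is to deduce the whole statement from Theorem~\ref{thm:second-main}, running the classical cooling (zero-temperature) argument of \cite[Theorem~4.1]{Je} with the upper semi-continuous map $\mu \mapsto {\mathfrak h}_\mu(f)$ in the role of the Kolmogorov-Sinai entropy. First I would apply Theorem~\ref{thm:second-main}(b) to the continuous potential $t\varphi$ and divide by $t>0$, which gives at once
\begin{equation*}
\frac1t\,\Ptop(f,t\varphi) \,=\, \max_{\nu\,\in\,\cP_f(X)}\,\Big\{\frac1t\,{\mathfrak h}_\nu(f) + \int \varphi\,d\nu\Big\}.
\end{equation*}
Taking $\varphi\equiv 0$ in Theorem~\ref{thm:second-main}(b) shows that $\max_{\nu\in\cP_f(X)}{\mathfrak h}_\nu(f)=\Ptop(f,0)=\htop(f)<+\infty$, so, together with item (a), one has the uniform bound $0\leqslant{\mathfrak h}_\nu(f)\leqslant\htop(f)$ for every $\nu\in\cP_f(X)$. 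Writing $\beta:=\max_{\mu\in\cP_f(X)}\int\varphi\,d\mu$, the previous identity is then squeezed between $\beta$ and $\beta+\htop(f)/t$, which yields the announced limit $\frac1t\,\Ptop(f,t\varphi)\to\beta$ as $t\to+\infty$.

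Next I would set up the accumulation points. For each $t>0$ fix $\nu_t\in\cP_f(X)$ with $\Ptop(f,t\varphi)={\mathfrak h}_{\nu_t}(f)+\int t\varphi\,d\nu_t$, whose existence is guaranteed by Theorem~\ref{thm:second-main}(b)--(c). Since $\cP_f(X)$ is weak$^*$ compact, $(\nu_t)_{t>0}$ has weak$^*$ accumulation points as $t\to+\infty$; let $\nu_\infty$ be one, along a sequence $t_n\to+\infty$. Passing to the limit along $t_n$ in $\frac1{t_n}\Ptop(f,t_n\varphi)=\frac1{t_n}{\mathfrak h}_{\nu_{t_n}}(f)+\int\varphi\,d\nu_{t_n}$, using $0\leqslant\frac1{t_n}{\mathfrak h}_{\nu_{t_n}}(f)\leqslant\htop(f)/t_n$, the continuity of $\varphi$ and the first part, gives $\int\varphi\,d\nu_\infty=\beta$, which is item (a); in particular $\nu_\infty$ is a $\varphi$-maximizing measure, hence $\nu_\infty\in\mathcal{M}_f(X,\varphi)$.

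For item (b), let $\nu\in\cP_f(X)$ be an arbitrary $\varphi$-maximizing measure. Because $\nu_t$ maximizes the right-hand side of Theorem~\ref{thm:second-main}(b) for the potential $t\varphi$, while $\int\varphi\,d\nu_t\leqslant\beta=\int\varphi\,d\nu$, I obtain
\begin{equation*}
{\mathfrak h}_{\nu_t}(f) \,\geqslant\, {\mathfrak h}_{\nu_t}(f) + t\int\varphi\,d\nu_t - t\beta \,\geqslant\, {\mathfrak h}_\nu(f) + t\int\varphi\,d\nu - t\beta \,=\, {\mathfrak h}_\nu(f) \qquad \forall\, t>0.
\end{equation*}
Taking $t=t_n\to+\infty$ and invoking the upper semi-continuity of ${\mathfrak h}_f$ at $\nu_\infty$ (Theorem~\ref{thm:second-main}), together with $\nu_{t_n}\to\nu_\infty$, yields ${\mathfrak h}_{\nu_\infty}(f)\geqslant\limsup_n{\mathfrak h}_{\nu_{t_n}}(f)\geqslant{\mathfrak h}_\nu(f)$. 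As $\nu$ ranges over all $\varphi$-maximizing measures, in particular those in $\mathcal{M}_f(X,\varphi)$, and since $\nu_\infty$ is itself one of them, this gives ${\mathfrak h}_{\nu_\infty}(f)=\max_{\nu\in\mathcal{M}_f(X,\varphi)}{\mathfrak h}_\nu(f)$, the maximum being realized at $\nu_\infty$; the remaining inequality in (b) then follows from ${\mathfrak h}_\nu(f)\geqslant h_\nu(f)$ on $\cP_f(X)$, i.e. Theorem~\ref{thm:second-main}(a).

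Finally, for item (c), observe that the displayed chain of inequalities already gives ${\mathfrak h}_{\nu_t}(f)\geqslant c$ for \emph{every} $t>0$, where $c:=\sup\{{\mathfrak h}_\nu(f)\colon\, \nu\ \varphi\text{-maximizing}\}$; note $c={\mathfrak h}_{\nu_\infty}(f)$ by the argument of (b) (and $c<+\infty$ since ${\mathfrak h}_\nu(f)\leqslant\htop(f)$). If some sequence $s_n\to+\infty$ had ${\mathfrak h}_{\nu_{s_n}}(f)\to L>c$, then, passing to a subsequence with $\nu_{s_n}\to\mu_\infty\in\cP_f(X)$, the measure $\mu_\infty$ would again be a weak$^*$ accumulation point of $(\nu_t)_{t>0}$, so ${\mathfrak h}_{\mu_\infty}(f)=c$ by (a)--(b), whereas upper semi-continuity would force ${\mathfrak h}_{\mu_\infty}(f)\geqslant L>c$, a contradiction. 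Hence $\limsup_{t\to+\infty}{\mathfrak h}_{\nu_t}(f)\leqslant c$, so $\lim_{t\to+\infty}{\mathfrak h}_{\nu_t}(f)=c={\mathfrak h}_{\nu_\infty}(f)$. I do not expect a genuine obstacle here: the whole substance lies in Theorem~\ref{thm:second-main}, and the only point requiring mild care is to verify that the cooling argument of \cite{Je} uses nothing about the entropy beyond its upper semi-continuity, the variational principle of Theorem~\ref{thm:second-main}(b), and the uniform bound $0\leqslant{\mathfrak h}_\nu(f)\leqslant\htop(f)$ — all of which are in hand.
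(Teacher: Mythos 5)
Your argument is correct and is essentially the proof the paper intends: the paper simply combines Theorem~\ref{thm:second-main} with \cite[Theorem~4.1]{Je}, replacing the Kolmogorov--Sinai entropy by the upper semi-continuous map $\mu \mapsto {\mathfrak h}_\mu(f)$, and your write-up is exactly that cooling argument (variational principle applied to $t\varphi$, the bounds $0 \leqslant {\mathfrak h}_\nu(f) \leqslant \htop(f)$ on $\cP_f(X)$, weak$^*$ compactness, and upper semi-continuity) spelled out in detail, including the slightly stronger observation that ${\mathfrak h}_{\nu_\infty}(f)$ dominates ${\mathfrak h}_\nu(f)$ over all $\varphi$-maximizing measures.
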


\subsection{Non-additive thermodynamic formalism}

The previous Convex Analysis conclusions also apply to the thermodynamic formalism and the ergodic optimization for sub-additive sequences of continuous potentials, which include as special cases the singular value potentials
associated to linear cocycles,
a topic that goes beyond the scope of hyperbolic dynamical systems (see Section~\ref{se:thermo-formalism-0} for definitions and a discussion on previous results). This is so in spite of the fact that the Legendre-Fenchel duality is not available in this context, since the space of sub-additive sequences of continuous functions is not a vector space.
As far as we know, this approach brings forth novelty even in the case when $f$ is the one-sided full shift.

In what follows, given a cocycle $A \in C(X, GL(\ell,\mathbb R))$ over a continuous map $f: X\to X$, we denote by $P(f, A,\,\Phi_{\vec\alpha})$ the topological pressure of its singular value potential $\Phi_{\vec\alpha}$ (see Definition~\ref{def:press}, equation~\eqref{eq:Old-VP}
and Subsection~\ref{subsec:lcocs} for the precise definitions).

\begin{maintheorem}\label{thm:non-additive}
\emph{Let $f$ be a continuous self-map of a compact metric space $(X,d)$. There is an upper semi-continuous map ${\mathfrak h}_f: \mathcal{P}_a(X) \to \mathbb{R}$ such that, for every cocycle $A \in C(X, GL(\ell,\mathbb R))$, every vector $\vec \alpha=(\alpha_1, \alpha_2, \dots, \alpha_\ell) \in \mathbb R^\ell$ with $\alpha_1\geqslant \alpha_2 \geqslant \dots \geqslant \alpha_\ell$ and the corresponding non-additive sequence $\Phi_{\vec\alpha}$ of singular value potentials, there exists a map $\psi_{\Phi_{\vec\alpha}} \in B_m(X)$ such that
$$P(f, A,\,\Phi_{\vec\alpha}) \,= \max_{\mu \,\in \,\mathcal P_a(X)}\,\Big\{{\mathfrak h}_\mu(f) + \int  \psi_{\Phi_{\vec\alpha}}\,d\mu \Big\}$$
and
$$\forall\, \mu \in \mathcal{P}_f(X) \quad \quad \psi_{\Phi_{\vec\alpha}}(x) \,\,=\,\, \sum_{i=1}^k \alpha_i \cdot \lambda_i(A,x) \quad \quad \text{at $\mu$-almost every $x \in X$}.$$
Moreover, the set of finitely additive equilibrium states is non-empty for every linear cocycle in $C(X, GL(\ell,\mathbb R))$ and the zero temperature limits of finitely additive equilibrium states have the largest value of ${\mathfrak h}_f$ amongst the Lyapunov optimizing measures.}
\end{maintheorem}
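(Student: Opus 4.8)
The plan is to reduce the whole statement to a single application of the abstract variational principle of Theorem~\ref{thm:main} to one pressure function on $B_m(X)$ depending only on $f$. I would fix
$$\Gamma(\psi)\;=\;\sup_{\mu\,\in\,\mathcal{P}_f(X)}\Big\{h_\mu(f)+\int\psi\,d\mu\Big\},\qquad\psi\in B_m(X)$$
(assuming, as is needed for $\Gamma$ to be real valued and as in Theorem~\ref{thm:second-main}, that $\htop(f)<+\infty$; alternatively one may use the Ruelle--Perron--Frobenius pressure on $B_m(X)$ from Subsection~\ref{sec:RPF-bounded}). One checks directly that $\Gamma$ is monotone, translation invariant and convex --- hence a pressure function in the sense of Definition~\ref{def:pressure-function} --- and that $\Gamma|_{C(X)}=\Ptop(f,\cdot)$ by the classical variational principle. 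Applying Theorem~\ref{thm:main} to $\Gamma$ produces the upper semi-continuous affine map ${\mathfrak h}_f:={\mathfrak h}_{\Gamma,B_m(X)}\colon\mathcal{P}_a(X)\to\mathbb{R}$, which depends only on $f$, together with the identity $\Gamma(\psi)=\max_{\mu\in\mathcal{P}_a(X)}\{{\mathfrak h}_\mu(f)+\int\psi\,d\mu\}$ for all $\psi\in B_m(X)$, the maximum being attained. The entire cocycle dependence of the theorem will be absorbed into the observable $\psi_{\Phi_{\vec\alpha}}$.

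Next I would build $\psi_{\Phi_{\vec\alpha}}$ out of the upper Lyapunov exponents of $A$. For $k=1,\dots,\ell$ set $\overline S_k(x)=\limsup_{n\to\infty}\tfrac1n\log\|\wedge^k A^n(x)\|$, which is a bounded Borel function on $X$ (bounded because $A\in C(X,GL(\ell,\mathbb{R}))$ and $X$ is compact, so $\|A^{\pm1}\|_\infty<+\infty$); put $\overline\lambda_i=\overline S_i-\overline S_{i-1}$, with $\overline S_0\equiv 0$, so that $\overline\lambda_1\geqslant\cdots\geqslant\overline\lambda_\ell$, and define
$$\psi_{\Phi_{\vec\alpha}}\;=\;\sum_{i=1}^{\ell}\alpha_i\,\overline\lambda_i\;\in\;B_m(X).$$
By Oseledets' multiplicative ergodic theorem applied to the exterior powers $\wedge^k A$, for each $\mu\in\mathcal{P}_f(X)$ the limit superior defining $\overline S_k$ is a genuine limit $\mu$-a.e.\ equal to $\sum_{i\leqslant k}\lambda_i(A,\cdot)$, so $\psi_{\Phi_{\vec\alpha}}(x)=\sum_{i=1}^{\ell}\alpha_i\,\lambda_i(A,x)$ for $\mu$-a.e.\ $x$ --- which is the pointwise identity in the statement. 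Writing $\varphi_{\vec\alpha,n}$ for the $n$-th term of the sub-additive sequence $\Phi_{\vec\alpha}$ (sub-additivity uses precisely $\alpha_1\geqslant\cdots\geqslant\alpha_\ell$), the same convergence together with Kingman's sub-additive ergodic theorem (or dominated convergence) gives
$$\mathcal F_*(\Phi_{\vec\alpha},\mu)\;:=\;\lim_{n\to\infty}\tfrac1n\int\varphi_{\vec\alpha,n}\,d\mu\;=\;\int\psi_{\Phi_{\vec\alpha}}\,d\mu\qquad\text{for every }\mu\in\mathcal{P}_f(X).$$

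The heart of the argument is then the identity $P(f,A,\Phi_{\vec\alpha})=\Gamma(\psi_{\Phi_{\vec\alpha}})$ --- the pressure of the sub-additive singular value sequence collapses to the pressure of the single bounded observable $\psi_{\Phi_{\vec\alpha}}$. I would derive it from the sub-additive variational principle of Cao--Feng--Huang, $P(f,A,\Phi_{\vec\alpha})=\sup_{\mu\in\mathcal{P}_f(X)}\{h_\mu(f)+\mathcal F_*(\Phi_{\vec\alpha},\mu)\}$, combined with the last display above, which rewrites the right-hand side as exactly $\Gamma(\psi_{\Phi_{\vec\alpha}})$ for the variational $\Gamma$ chosen above (if the Ruelle--Perron--Frobenius pressure is used instead, one must additionally check that the two pressure functions agree on observables of the form $\psi_{\Phi_{\vec\alpha}}$; cf.\ Theorem~\ref{cor:additive-2}). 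Substituting $\psi=\psi_{\Phi_{\vec\alpha}}$ into the variational principle of Theorem~\ref{thm:main} then gives
$$P(f,A,\Phi_{\vec\alpha})\;=\;\max_{\mu\,\in\,\mathcal{P}_a(X)}\Big\{{\mathfrak h}_\mu(f)+\int\psi_{\Phi_{\vec\alpha}}\,d\mu\Big\},$$
and the attainment clause in Theorem~\ref{thm:main} shows $\cE_{\psi_{\Phi_{\vec\alpha}}}(\Gamma)\neq\emptyset$, i.e.\ finitely additive equilibrium states for $\Phi_{\vec\alpha}$ exist for every $A\in C(X,GL(\ell,\mathbb{R}))$.

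Finally, for the zero-temperature assertion, observe that $\psi_{\Phi_{t\vec\alpha}}=t\,\psi_{\Phi_{\vec\alpha}}$, so a finitely additive equilibrium state $\mu_t$ of $\Phi_{t\vec\alpha}$ maximizes $\mu\mapsto{\mathfrak h}_\mu(f)+t\int\psi_{\Phi_{\vec\alpha}}\,d\mu$ over $\mathcal{P}_a(X)$. Dividing by $t$, using that ${\mathfrak h}_f$ is bounded above by $\htop(f)$ and upper semi-continuous on the weak$^*$-compact set $\mathcal{P}_a(X)$, and running the Jenkinson-type argument underlying Corollary~\ref{thm:zerotemp} with the Kolmogorov--Sinai entropy replaced by ${\mathfrak h}_f$, one obtains that $\tfrac1t P(f,A,\Phi_{t\vec\alpha})\to\sup_{\mu}\int\psi_{\Phi_{\vec\alpha}}\,d\mu$ and that every weak$^*$ accumulation point $\mu_\infty$ of $(\mu_t)_{t>0}$ as $t\to+\infty$ is Lyapunov optimizing (i.e.\ maximizes the $\vec\alpha$-weighted sum of Lyapunov exponents $\mu\mapsto\int\psi_{\Phi_{\vec\alpha}}\,d\mu$) and realizes the largest value of ${\mathfrak h}_f$ among such measures, with ${\mathfrak h}_{\mu_t}(f)\to{\mathfrak h}_{\mu_\infty}(f)$. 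I expect two points to be genuinely delicate: the collapse identity $P(f,A,\Phi_{\vec\alpha})=\Gamma(\psi_{\Phi_{\vec\alpha}})$ --- which needs the Oseledets/Kingman bridge between the sub-additive limit of $\Phi_{\vec\alpha}$ and the Lyapunov exponents and, under the alternative choice, the compatibility of the two ambient pressures --- and the fact that $\psi_{\Phi_{\vec\alpha}}$ is in general genuinely non-continuous, which is exactly what forces the passage to $B_m(X)$ and to finitely additive rather than Borel measures, and which demands care in the zero-temperature step, where $\mu\mapsto\int\psi_{\Phi_{\vec\alpha}}\,d\mu$ is only semicontinuous at best on $\mathcal{P}_a(X)$ so the Lyapunov optimization must be set up accordingly.
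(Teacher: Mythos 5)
Your proposal is correct and follows essentially the same route as the paper: reduce the sub-additive singular value pressure to a single bounded observable via the Kingman/Oseledets identification $\mathcal F_*(\Phi_{\vec\alpha},\mu)=\int\psi_{\Phi_{\vec\alpha}}\,d\mu$ together with the Cao--Feng--Huang variational principle, apply Theorem~\ref{thm:main} to the pressure function $\psi\mapsto\sup_{\mu\in\mathcal P_f(X)}\{h_\mu(f)+\int\psi\,d\mu\}$ on $B_m(X)$, and run the zero-temperature argument with ${\mathfrak h}_f$ in place of the metric entropy. The only (immaterial) differences are your choice of representative $\psi_{\Phi_{\vec\alpha}}$ as a limsup-based combination of exterior-power growth rates instead of the paper's upper semi-continuous $\inf_n\frac1n\varphi_{\vec\alpha,n}$ --- the two agree $\mu$-almost everywhere for every invariant $\mu$, so all integrals and the value of the pressure function coincide --- and your explicit assumption $\htop(f)<+\infty$, which the paper's own proof (and its Theorem~\ref{cor:additive-2}) also uses implicitly.
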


\section{A variational principle: Proof of Theorem~\ref{thm:main}}\label{se:press-functional}

We begin this section by recalling some duality results on the Banach spaces we will consider.
If $X$ is locally compact, it follows from the Riesz-Markov representation Theorem that the dual of $C(X)$ can be identified with the collection of all finite signed measures on $(X, \mathfrak{B})$ equipped with the weak$^*$ topology, and that the subset of its positive normalized elements corresponds to the space $\cP(X)$
(cf. \cite[pp. 253]{KT}). This is clearly the case if $X$ is a compact metric space, where
$C(X)=C_c(X)=C_b(X)$. In the non-compact setting, the dual of $C_b(X)$ is identified with the space of regular
Borel and finitely additive measures endowed with the total variation norm (cf. Theorem~14.9 in \cite{AB06}).
An extension of the Riesz-Markov Theorem informs that the dual of $B_m(X)$ is represented by the space of Borel finitely additive measures with the topology induced by the total variation norm
(see \cite{FK,Hi}), whose subset of positive normalized elements corresponds to $\cP_a(X)$.

For the sake of completeness and rigor, we include a proof of the first part of Theorem~\ref{thm:main} along the lines of that provided by F\"ollmer and Schied (see \cite{FS-B}). 
Let $(X,d)$ be a metric space, ${\mathbf B} = B_m(X)$ and $\Gamma: {\mathbf B} \to \mathbb{R}$ be a pressure function. Define
$$ \mathcal{A}_\Gamma = \big\{\varphi \in {\mathbf B} :\, \Gamma(-\varphi) \leqslant 0 \big\}\quad \quad \text{and} \quad \quad {\mathfrak h}(\mu) \, := \, \inf_{\varphi \, \in \, \mathcal{A}_\Gamma}\, \left\{\int  \varphi \, d\mu\right\}.$$
We start showing that, for every $\varphi \in {\mathbf B}$, one has
\begin{equation}\label{eq:geq}
\Gamma(\varphi) \,\,\geqslant \,\,\sup_{\mu \, \in \, \mathcal{P}_a(X)}\, \left\{{\mathfrak h}(\mu) + \int \varphi \, d\mu \right\}.
\end{equation}
Due to translation invariance it is clear that $\Gamma(\varphi-\Gamma(\varphi))=0$. Thus $\widetilde{\varphi}:= \Gamma(\varphi)-\varphi$ belongs to $\mathcal{A}_\Gamma$. Therefore, for every $\mu \in \mathcal{P}_a(X)$
$${\mathfrak h}(\mu) \,\,\leqslant\,\, \int \widetilde{\varphi} \ d\mu = \Gamma(\varphi)-\int \varphi \ d\mu$$
which implies \eqref{eq:geq}. Conversely, given $\varphi \in {\mathbf B}$ we need to find $\mu_{\varphi} \in \mathcal{P}_a(X)$ such that
\begin{equation}\label{eq:leq}
\Gamma(\varphi) \,\,\leqslant\,\,  {\mathfrak h}(\mu_{\varphi}) + \int \varphi \ d\mu_{\varphi}.
\end{equation}
Yet, it is enough to do it for $\varphi$ such that $\Gamma(\varphi) = 0$ since the general case follows from taking
$\varphi-\Gamma(\varphi)$ and the translation invariance property of $\Gamma$. So, consider $\varphi \in {\mathbf B}$ such that $\Gamma(\varphi) = 0$. Thus the observable $-\varphi$ does not belong to the set
$$\mathcal{B}_{\Gamma}\,= \,\,\left\lbrace \psi \in {\mathbf B}:\, \Gamma(-\psi) < 0 \right\rbrace$$
which is convex and open due to the convexity and continuity of $\Gamma$. Therefore, by the geometric version of the Hahn-Banach Theorem there is a continuous, not identically zero, linear functional
$L: \,{\mathbf B} \, \to \, \mathbb{R}$ which separates the sets $\left\lbrace -\varphi \right\rbrace$ and $\mathcal{B}_\Gamma$ in the sense that
\begin{equation}\label{eq:LL}
L(-\varphi) \,\,\leqslant\,\, \inf_{\psi \,\in \,\mathcal{B}_\Gamma} \, L(\psi).
\end{equation}
By linearity of $L$, this is equivalent to saying that
\begin{equation}\label{eq:L}
L(\varphi) + \inf_{\psi \,\in \,\mathcal{B}_\Gamma} \, L(\psi) \,\geqslant\, 0.
\end{equation}

\begin{lemma}\label{le:positive} $L$ is positive and $L(1) > 0$, where $1$ stands for the constant function equal to one.
\end{lemma}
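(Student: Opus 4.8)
The plan is to extract both assertions from the separation inequality \eqref{eq:LL} together with the monotonicity (C$_1$) and translation invariance (C$_2$) of $\Gamma$. As a preliminary remark I would first note that $\mathcal{B}_\Gamma$ is nonempty: every constant function $c > \Gamma(0)$ lies in it, since (C$_2$) gives $\Gamma(-c) = \Gamma(0) - c < 0$. This guarantees that $\inf_{\psi \in \mathcal{B}_\Gamma} L(\psi)$ is a genuine real number (it is bounded below by $L(-\varphi)$ via \eqref{eq:LL}) and that the geometric Hahn--Banach theorem does produce a functional $L$ which is not identically zero.

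\textbf{Positivity.} Fix $\psi \in {\mathbf B}$ with $\psi \geq 0$ and pick any $\eta \in \mathcal{B}_\Gamma$. The key observation is that $\eta + t\psi \in \mathcal{B}_\Gamma$ for every $t \geq 0$: indeed $-\eta - t\psi \leq -\eta$, so by (C$_1$) we get $\Gamma\big(-(\eta + t\psi)\big) \leq \Gamma(-\eta) < 0$. Plugging $\eta + t\psi$ into \eqref{eq:LL} and using the linearity of $L$ yields $L(\eta) + t\,L(\psi) = L(\eta + t\psi) \geq L(-\varphi)$ for all $t \geq 0$. If $L(\psi)$ were negative, the left-hand side would tend to $-\infty$ as $t \to +\infty$, a contradiction. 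Hence $L(\psi) \geq 0$, i.e. $L$ is a positive functional.

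\textbf{Strict positivity of $L(1)$.} Positivity already forces $L(1) \geq 0$. For the strict inequality, observe that for an arbitrary $\psi \in {\mathbf B}$ the functions $\|\psi\|_\infty \cdot 1 - \psi$ and $\psi + \|\psi\|_\infty \cdot 1$ are both nonnegative, so applying positivity to each gives $|L(\psi)| \leq \|\psi\|_\infty\, L(1)$. Were $L(1) = 0$, this would force $L \equiv 0$, contradicting the fact that the separating functional supplied by Hahn--Banach is not identically zero. Therefore $L(1) > 0$.

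I do not expect a serious obstacle here; the argument is routine convex-analysis bookkeeping. The one point that needs care is the direction of the monotonicity step in the proof of positivity: one must perturb an element of $\mathcal{B}_\Gamma$ by \emph{adding} a nonnegative function, so that the argument of $\Gamma$ decreases and the strict inequality $\Gamma(-\,\cdot\,) < 0$ is preserved, and then let the perturbation parameter go to $+\infty$.
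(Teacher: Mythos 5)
Your proposal is correct and follows essentially the same route as the paper: positivity is obtained by observing that an element of $\mathcal{B}_\Gamma$ perturbed by a large nonnegative multiple of $\psi$ stays in $\mathcal{B}_\Gamma$ (the paper uses the specific constant $c=\Gamma(0)+1$ and multiples $\lambda\psi$, you use an arbitrary $\eta\in\mathcal{B}_\Gamma$ and $\eta+t\psi$), and then the separation inequality \eqref{eq:LL} forces $L(\psi)\geqslant 0$. Your derivation of $L(1)>0$ via the bound $|L(\psi)|\leqslant\|\psi\|_\infty\,L(1)$ is only a cosmetic variant of the paper's decomposition $\psi_0=\psi_0^{+}-\psi_0^{-}$; both rest on positivity plus the non-triviality of the Hahn--Banach functional.
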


\begin{proof}
Consider $\psi \in {\mathbf B}$ such that $\psi \geqslant 0$. Firstly, let us see that for every $\lambda > 0$ we have $\lambda \psi + c \in \mathcal{B}_\Gamma$, where $c=\Gamma(0) + 1$. Indeed, by translation invariance and monotonicity of $\Gamma$ one has
$$\Gamma(-\lambda \psi - c) \,=\,\Gamma(-\lambda \psi) - c \,\,\leqslant\,\,  \Gamma(0) - c  < 0.$$
Due to \eqref{eq:LL}, this in turn implies that
$$L(-\varphi) \,\,\leqslant\,\, L(\lambda \psi + c) \,=\, \lambda L(\psi) + c L(1) \quad \quad \forall \,\lambda>0.$$
Thus, if $L(\psi) < 0$ then $L(-\varphi) = -\infty$, leading to a contradiction
with the fact that $L$ is a bounded functional.
This proves that $L$ is a positive functional. In particular, $L(1) \geqslant 0$.

Let us now prove that $L(1) \neq 0$. As $L$ is linear and not identically zero, we may take $\psi_0 \in {\mathbf B}$ such that $L(\psi_0) > 0$ and $\left\| \psi_0 \right\|_\infty < 1.$ Write $\psi_0 = \psi_0^{+} - \psi_0^{-}$, where $\psi_0^{+} = \max \,\{\psi_0, 0\}$ and $\psi_0^{-} = \max \,\{-\psi_0, 0\}$. Then, as $L$ is positive, we have
$$L(\psi_0^{+}) \,=\, L(\psi_0) + L(\psi_0^{-}) \,\,\geqslant\,\, L(\psi_0) > 0 \quad \quad \text{and} \quad \quad L(1 -\psi_0^{+}) \geqslant 0$$
since $1-\psi_0^{+} \geqslant 0$. Using again both the linearity and the monotonicity of $L$ we finally conclude that
$$L(1) \,=\,  L(1 -\psi_0^{+}) + L(\psi_0^{+}) \,\,\geqslant\,\, L(\psi_0^{+}) > 0.$$
\end{proof}

The previous lemma indicates that the continuous linear operator $\frac{L}{L(1)}$ is positive and normalized.
Therefore, according to an extension of the Riesz-Markov Representation Theorem \cite{DS}, there is a finitely additive probability measure $\mu_{\varphi} \in \mathcal{P}_a(X)$ (which belongs to $\mathcal{P}(X)$ if $\mathbf{B}=C_c(X)$ and $X$ is locally compact) such that
$$\int \psi \ d\mu_{\varphi} \,=\, \frac{L(\psi)}{L(1)} \quad \quad \forall \,\psi \in {\mathbf B}.$$
We are left to show that $\mu_\varphi$ satisfies \eqref{eq:leq}.
Observe that for every $\psi \in \mathcal{A}_{\Gamma}$ and every $\varepsilon> 0$ we have that $\psi + \varepsilon \in \mathcal{B}_\Gamma$. In other words,
$\mathcal{A}_{\Gamma,\,\varepsilon}\,:= \,\,\left\lbrace \psi + \varepsilon: \psi \in \mathcal{A}_\Gamma \right\rbrace \,\subset\, \mathcal{B}_\Gamma.$
Hence
$${\mathfrak h}(\mu_{\varphi}) \,=\, \inf_{\psi \,\in\, \mathcal{A}_{\Gamma}}\, \int \psi \, d\mu_{\varphi} \,
\leqslant\,\, \inf_{\psi \,\in\, \mathcal{B}_\Gamma}\, \int \psi \, d\mu_{\varphi} \,
\,\leqslant\,\, \inf_{\psi \,\in\, \mathcal{A}_{\Gamma,\,\varepsilon}}\, \int \psi \, d\mu_{\varphi} \,=\,  \inf_{\psi \,\in\, \mathcal{A}_{\Gamma}}\, \int \psi \, d\mu_{\varphi} + \varepsilon.$$
Since $\varepsilon>0$ is arbitrary we conclude that
${\mathfrak h}(\mu_{\varphi}) \,=\, \inf_{\psi \,\in \,\mathcal{B}_\Gamma}\, \int \psi \ d\mu_{\varphi}.$
Consequently,
\begin{eqnarray*}
{\mathfrak h}(\mu_{\varphi})  + \int \,\varphi \ d\mu_{\varphi} &=&  \left(\inf_{\psi \,\in \,\mathcal{B}_\Gamma}\, \int \psi \, d\mu_{\varphi}\right) + \frac{L(\varphi)}{L(1)} \,=\,  \left(\inf_{\psi \,\in \,\mathcal{B}_\Gamma} \, \frac{L(\psi)}{L(1)}\right) + \frac{L(\varphi)}{L(1)} \\
&& \medskip\\
&=&  \frac{1}{L(1)}\,\left(L(\varphi) + \inf_{\psi \,\in\, \mathcal{B}_\Gamma} \,L(\psi) \right) \geqslant  0 \,=\, \Gamma(\varphi)
\end{eqnarray*}
where we have used relation \eqref{eq:L} in the last step. This completes the proof of \eqref{eq:leq}.

\smallskip

Endowing $\mathcal{P}_a(X)$ with the total variation distance, the function ${\mathfrak h}$ is upper semi-continuous since it is defined as the infimum of the family of continuous functions $\left(\mu \in \mathcal{P}_a(X)\,  \mapsto  \,\int \varphi \, d\mu\right)_{\varphi \, \in \,\mathcal{A}_\Gamma}$ (cf. \cite{Ba,Bo}).

We proceed by showing the maximality (hence uniqueness) of the function ${\mathfrak h}$ among those which satisfy \eqref{eq:var}. Let $\alpha$ be such a map. Then,
$$\Gamma(-\psi) \,\,\geqslant\,\, \alpha(\mu) + \int -\psi \, d\mu \quad \quad \forall \,\psi \in {\mathbf B} \quad \forall \,\mu \in \mathcal{P}_a(X)$$
or, equivalently,
$$\alpha(\mu) \,\,\leqslant\,\, \Gamma(-\psi) + \int \psi \, d\mu \quad \quad \forall \,\psi \in {\mathbf B} \quad \forall \,\mu \in \mathcal{P}_a(X)$$
which implies that
\begin{equation}\label{eq:gamma-leq}
\alpha(\mu) \,\,\leqslant\,\, \inf_{\psi \,\in\, {\mathbf B}}\, \left\lbrace \Gamma(-\psi) + \int \psi \, d\mu \right\rbrace \quad \quad \forall\,\mu \in \mathcal{P}_a(X).
\end{equation}
Moreover, as $\mathcal{A}_\Gamma \subset {\mathbf B}$ and $\Gamma(-\psi) \,\,\leqslant\,\, 0$ for every $\psi \in \mathcal{A}_\Gamma$ we conclude that
\begin{eqnarray*}
\alpha(\mu) \,\,&\leqslant&\,\, \inf_{\psi \,\in\, {\mathbf B}}\, \left\lbrace \Gamma(-\psi) + \int \psi \, d\mu \right\rbrace \,\,\leqslant\,\, \inf_{\psi \,\in\, \mathcal{A}_\Gamma}\, \left\lbrace \Gamma(-\psi)+ \int \psi \, d\mu \right\rbrace \\
&\leqslant& \inf_{\psi \,\in\, \mathcal{A}_\Gamma} \,\left\lbrace \int \psi \, d\mu \right\rbrace \,=\, {\mathfrak h}(\mu) \quad \quad \forall\,\mu \in \mathcal{P}_a(X).
\end{eqnarray*}
The previous reasoning using $\alpha = \mathfrak{h}$ allows us to conclude that
\begin{equation}\label{eq:entropyB}
{\mathfrak h}(\mu) \,\,=\,\, \inf_{\psi \,\in\, {\mathbf B}}\, \left\lbrace \Gamma(-\psi) + \int \psi \, d\mu \right\rbrace \quad \quad \forall \,\mu \in \mathcal{P}_a(X).
\end{equation}

As ${\mathbf B}$ is a vector space the equality \eqref{eq:entropyB} can be rewritten as
$${\mathfrak h}(\mu) \,=\,  \inf_{\psi \,\in\, {\mathbf B}}\, \left\lbrace \Gamma(-\psi) - \int - \psi \, d\mu \right\rbrace
= \,  \inf_{\psi \,\in\, {\mathbf B}}\, \left\lbrace \Gamma(\psi) - \int  \psi \, d\mu \right\rbrace.$$

We are left to prove that $\mathfrak{h}$ is affine. The convex set $\cP_a(X)$ is compact in the weak$^*$ topology (cf. \cite[Theorem 2, V.4.2]{DS}) and so, by the Krein-Milman Theorem,
it is the closed convex hull of its extreme points. Moreover, we can use the the Choquet Representation Theorem (cf. \cite[Theorem 6.6]{AE80} or \cite[p.153]{Wa}) to express each member of $\cP_a(X)$ in terms of the extreme elements of $\cP_a(X)$. More precisely, if $E_a(X)$ denotes the set of extreme points of $\cP_a(X)$ and $\mu$ belongs to $\cP_a(X)$ then there is a unique measure $\mathbb{P}_\mu$ on the Borel subsets of the compact metrizable space $\cP_a(X)$ such that $\mathbb{P}_\mu(E_a(X))=1$ and
\begin{equation}\label{Choquet1}
\int_X \,\psi(x) \,d\mu(x) = \int_{E_a(X)}\,\left(\int_X\, \psi(x) \, dm(x)\right)\,d\mathbb{P}_\mu(m) \quad \quad \forall\, \psi \in B_m(X).
\end{equation}
Hence every $\mu \in \cP_a(X)$ is a generalized convex combination of extreme finitely additive probability measures. We write $\mu = \int_{E_a(X)}\,m\, \,d\mathbb{P}_\mu(m)$ and call this equality the decomposition in extremes of the finitely additive probability measure $\mu$.

\begin{lemma}\label{le:Wa(ii)} Given $\mu \in \cP_a(X)$ whose decomposition in extremes is $\mu = \int_{E_a(X)}\,m\, \,d\mathbb{P}_\mu(m)$, then
$$\mathfrak{h}(\mu) = \int \mathfrak{h}(m)\,d\mathbb{P}_\mu(m).$$
In particular, the function $\mathfrak{h}$ is affine.
\end{lemma}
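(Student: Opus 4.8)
The plan is to prove the two inequalities $\mathfrak{h}(\mu)\geqslant\int\mathfrak{h}(m)\,d\mathbb{P}_\mu(m)$ and $\mathfrak{h}(\mu)\leqslant\int\mathfrak{h}(m)\,d\mathbb{P}_\mu(m)$ separately, the first being elementary and the second carrying the weight. First record that the constant function $\Gamma(0)$ lies in $\mathcal{A}_\Gamma$, since $\Gamma(-\Gamma(0))=\Gamma(0)-\Gamma(0)=0$; hence $\mathfrak{h}(\mu)\leqslant\int\Gamma(0)\,d\mu=\Gamma(0)$ for every $\mu\in\mathcal{P}_a(X)$, so $\mathfrak{h}$ is bounded above and the integrals $\int\mathfrak{h}(m)\,d\mathbb{P}_\mu(m)$ are well defined with values in $[-\infty,\Gamma(0)]$. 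For the inequality ``$\geqslant$'', fix $\varphi\in\mathcal{A}_\Gamma$; as $\int\varphi\,dm\geqslant\inf_{\psi\in\mathcal{A}_\Gamma}\int\psi\,dm=\mathfrak{h}(m)$ for every $m\in E_a(X)$, integrating this pointwise bound against $\mathbb{P}_\mu$ and using the Choquet identity \eqref{Choquet1} (which holds for every element of $B_m(X)$, in particular for $\varphi$) gives $\int\varphi\,d\mu=\int_{E_a(X)}\big(\int\varphi\,dm\big)\,d\mathbb{P}_\mu(m)\geqslant\int_{E_a(X)}\mathfrak{h}(m)\,d\mathbb{P}_\mu(m)$; taking the infimum over $\varphi\in\mathcal{A}_\Gamma$ yields the claim. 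In passing this re-proves that $\mathfrak{h}$, being an infimum of affine maps on $\mathcal{P}_a(X)$, is concave.

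For the inequality ``$\leqslant$'' I would argue in two stages. Stage one treats a $\mathbb{P}_\mu$ supported on finitely many distinct extreme points, $\mathbb{P}_\mu=\sum_{j=1}^n t_j\,\delta_{m_j}$ with $t_j>0$ and $\sum_j t_j=1$, so that $\mu=\sum_j t_j m_j$. The structural input is that each extreme point of $\mathcal{P}_a(X)$ is a two-valued (hence $\{0,1\}$-valued) finitely additive set function; consequently distinct extreme points are mutually singular, and a finite induction produces pairwise disjoint Borel sets $B_1,\dots,B_n$ with $\bigcup_j B_j=X$ and $m_i(B_j)=\delta_{ij}$. Given $\varepsilon>0$, choose $\varphi_j\in\mathcal{A}_\Gamma$ with $\int\varphi_j\,dm_j<\mathfrak{h}(m_j)+\varepsilon$ and splice them into the bounded Borel observable $\varphi:=\sum_{j=1}^n\varphi_j\,\mathbf{1}_{B_j}\in B_m(X)$; since the $B_j$ are disjoint and $m_i(B_j)=\delta_{ij}$ one gets $\int\varphi\,d\mu=\sum_{j=1}^n t_j\int\varphi_j\,dm_j<\int\mathfrak{h}(m)\,d\mathbb{P}_\mu(m)+\varepsilon$. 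It then remains to see that $\varphi$, after at most an additive constant that tends to $0$ with $\varepsilon$, belongs to $\mathcal{A}_\Gamma$: here one would use monotonicity of $\Gamma$ together with the stability of $\mathcal{A}_\Gamma$ under finite maxima (if $\varphi,\psi\in\mathcal{A}_\Gamma$ then $\Gamma\big(-(\varphi\vee\psi)\big)=\Gamma\big((-\varphi)\wedge(-\psi)\big)\leqslant\Gamma(-\varphi)\leqslant0$). Granting this, one obtains $\mathfrak{h}(\mu)\leqslant\int\mathfrak{h}(m)\,d\mathbb{P}_\mu(m)+c(\varepsilon)$ with $c(\varepsilon)\to0$, and letting $\varepsilon\to0$ settles the finitely supported case. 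Stage two removes the finiteness assumption: a general Choquet measure $\mathbb{P}_\mu$ on the compact metrizable set $\mathcal{P}_a(X)$ is approximated by finitely supported measures carried by $E_a(X)$ whose barycenters converge to $\mu$, and the upper semicontinuity of $\mathfrak{h}$ lets one pass to the limit in Stage one's estimate.

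Combining the two inequalities gives $\mathfrak{h}(\mu)=\int_{E_a(X)}\mathfrak{h}(m)\,d\mathbb{P}_\mu(m)$ for every $\mu\in\mathcal{P}_a(X)$. Affineness then follows: for $\mu=t\mu_1+(1-t)\mu_2$ the measure $t\,\mathbb{P}_{\mu_1}+(1-t)\,\mathbb{P}_{\mu_2}$ is supported on $E_a(X)$ and has barycenter $\mu$, hence by uniqueness of the representing measure it equals $\mathbb{P}_\mu$, and applying the identity three times gives $\mathfrak{h}(t\mu_1+(1-t)\mu_2)=t\,\mathfrak{h}(\mu_1)+(1-t)\,\mathfrak{h}(\mu_2)$. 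The main obstacle is precisely the verification in Stage one that the spliced observable (up to a vanishing constant) lies in $\mathcal{A}_\Gamma$; this is the only place where the defining properties of a pressure function, and in particular monotonicity through the max-stability of $\mathcal{A}_\Gamma$, have to be pushed hard, since a pressure function need not behave locally. A secondary, purely technical difficulty is the topological bookkeeping in Stage two, as the Choquet--Krein--Milman machinery operates in the weak$^*$ topology whereas the upper semicontinuity of $\mathfrak{h}$ was established for the total variation topology.
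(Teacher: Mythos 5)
Your first inequality, $\mathfrak{h}(\mu)\geqslant\int\mathfrak{h}(m)\,d\mathbb{P}_\mu(m)$, and your closing derivation of affineness from the identity via uniqueness of the representing measure are correct and agree with the paper. The trouble is Stage one of the reverse inequality, and it is not a deferrable detail: the claim that the spliced observable $\varphi=\sum_j\varphi_j\mathbf{1}_{B_j}$ belongs to $\mathcal{A}_\Gamma$ up to an additive constant $c(\varepsilon)\to0$ is what fails, and the tool you invoke points the wrong way. Max-stability of $\mathcal{A}_\Gamma$ is true, but from $\varphi\leqslant\varphi_1\vee\dots\vee\varphi_n$ monotonicity only yields the useless lower bound $\Gamma(-\varphi)\geqslant\Gamma(-(\varphi_1\vee\dots\vee\varphi_n))$; to conclude $\Gamma(-\varphi)\leqslant 0$ you would need to dominate $\varphi$ from \emph{below} by an element of $\mathcal{A}_\Gamma$, i.e.\ min-stability of $\mathcal{A}_\Gamma$, and that is precisely the kind of localization that (C$_1$)--(C$_3$) do not give. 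Concretely, let $X=\{a,b\}$, let $\nu$ be the uniform measure and $\Gamma(\psi)=\log\int e^\psi\,d\nu$, which satisfies (C$_1$)--(C$_3$). Near-optimal members of $\mathcal{A}_\Gamma$ for $\delta_a$ and $\delta_b$ are $\varphi_1\approx(-\log 2,\,M)$ and $\varphi_2\approx(M,\,-\log 2)$ with $M$ large; the spliced observable is then (essentially) the constant $-\log 2$, for which $\Gamma(-\varphi)\approx\log 2$, so no additive correction smaller than $\log 2$ (with $n$ pieces, $\log n$) returns it to $\mathcal{A}_\Gamma$, however small $\varepsilon$ is. Your Stage one therefore only produces $\mathfrak{h}(\mu)\leqslant\int\mathfrak{h}(m)\,d\mathbb{P}_\mu(m)+\log n$. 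Moreover, for this $\Gamma$ the function $\mathfrak{h}$ is minus a relative entropy, which is strictly concave on the two-point simplex, so the $\log n$ loss is not an artifact of your estimates: no splicing argument using only (C$_1$)--(C$_3$) and a local comparison can close this step.

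Stage two has an independent problem of direction: from $\mathfrak{h}(\mu_k)\leqslant\int\mathfrak{h}\,d\mathbb{P}_k$ for finitely supported $\mathbb{P}_k$ whose barycenters $\mu_k$ converge to $\mu$, upper semicontinuity of $\mathfrak{h}$ gives $\limsup_k\mathfrak{h}(\mu_k)\leqslant\mathfrak{h}(\mu)$, which cannot be combined with the Stage-one bound to produce an upper bound on $\mathfrak{h}(\mu)$; you would need lower semicontinuity along the approximating sequence, which is not available. For comparison, the paper's proof involves no splicing and no approximation: it applies the Choquet identity \eqref{Choquet1} to each $\psi\in\mathcal{A}_\Gamma$ and then interchanges the infimum over $\mathcal{A}_\Gamma$ with the integral against $\mathbb{P}_\mu$, invoking a monotone convergence theorem for nets of continuous functions, after which affineness follows exactly as in your last paragraph. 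Note that the legitimacy of that interchange rests on the family $m\mapsto\int\psi\,dm$, $\psi\in\mathcal{A}_\Gamma$, being suitably directed, which is the same structural issue (downward directedness of $\mathcal{A}_\Gamma$) that blocks your Stage one; so the obstacle you flagged is indeed where the entire content of the lemma lies, but your proposal does not overcome it.
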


\begin{proof}
Recall that $\mathfrak{h}(\mu) = \inf_{\psi \, \in \, \mathcal{A}_\Gamma} \,\int  \psi \, d\mu$ and the map $\psi \, \in \, B_m(X) \, \mapsto \, \int  \psi \, d\mu$ is continuous. By \eqref{Choquet1} there exists a probability measure $\mathbb P_\mu$ giving full weight to the space $E_a(X)$ of extreme measures of $\cP_a(X)$ and satisfying $\int \,\psi\, d\mu = \int_{E_a(X)}\,\left(\int\, \psi \, dm \right)\,d\mathbb{P}_\mu(m)$. In particular,
$$
\mathfrak{h}(\mu) = \inf_{\psi \, \in \, \mathcal{A}_\Gamma} \,\int_{E_a(X)}\,\left(\int \, \psi(x) \, dm(x)\right)\,d\mathbb{P}_\mu(m).
$$
Moreover, as $\mathbb P_\mu$ is countably additive, the Monotone Convergence Theorem (cf.~\cite[Theorem IV.15, Vol. I]{RS}) for nets of continuous maps when applied to the net $(\int  \psi \, d\mu)_{\psi \, \in \, \mathcal{A}_\Gamma}$ implies that
\begin{eqnarray*}
\mathfrak{h}(\mu) &=& \inf_{\psi \, \in \, \mathcal{A}_\Gamma} \,\int_{E_a(X)}\,\left(\int \, \psi(x) \, dm(x)\right)\,d\mathbb{P_\mu}(m) \\
	&=& \int_{E_a(X)}\,\left(\inf_{\psi \, \in \, \mathcal{A}_\Gamma} \,\int \, \psi(x) \, d\,m(x)\right)\,d\mathbb{P}_\mu(m)
= \int_{E_a(X)}\,\mathfrak{h}(m)\,d\mathbb{P}_\mu(m).
\end{eqnarray*}
Finally, by Choquet Theorem, given $\mu_1,\mu_2 \in \cP_a(X)$ there exist unique probability measures $\mathbb P_{\mu_1}$, $\mathbb P_{\mu_2}$ giving full weight to $E_a(X)$ and such that $\mu_i = \int_{E_a(X)}\,m\,\,d\mathbb{P}_{\mu_i}(m)$ for $i=1,2$. In particular, for each $0 < \alpha < 1$, one has
$$\alpha \mu_1 + (1-\alpha) \mu_2 \,=\, \int_{E_a(X)}\,m \, \, d\,[\alpha \,d\mathbb{P}_{\mu_1} + (1-\alpha) \,d\mathbb{P}_{\mu_2}](m)$$
and, using the first part of the lemma, one gets
$$\mathfrak{h}(\alpha \mu_1 + (1-\alpha) \mu_2)  = \int_{E_a(X)}\mathfrak{h}(m)d[\alpha \mathbb{P}_{\mu_1} +
(1-\alpha)\mathbb{P}_{\mu_2}](m) = \alpha \mathfrak{h}(\mu_1) + (1-\alpha) \mathfrak{h}(\mu_2).$$
This ends the proof of Theorem~\ref{thm:main}.
\end{proof}

\section{Tangent functionals: Proof of Theorem~\ref{thm:tangent-functionals}}\label{sec:tangent}

The argument we will present follows closely the one in \cite[Theorems 9.14 and 9.15]{Wa}. Consider $\varphi \in \mathbf B$ and assume that $\mu \in \cE_\varphi(\Gamma)$. Then, by Theorem~\ref{thm:main},
$$\Gamma(\varphi + \psi)-\Gamma(\varphi) \geqslant {\mathfrak h}(\mu) + \int (\varphi + \psi) \, d\mu -  {\mathfrak h}(\mu) - \int \varphi \, d\mu = \int \psi\, d\mu \quad \quad \forall \,\psi \in \mathbf B.$$
This shows that $\cE_\varphi(\Gamma)\subseteq \mathcal T_\varphi(\Gamma).$ To establish the converse inclusion, fix $\mu \in \mathcal T_\varphi(\Gamma)$ and note that
\begin{eqnarray*}
\Gamma(\varphi + \psi) - \Gamma(\varphi) &\geqslant& \int \psi\, d\mu \quad \quad \forall \,\psi \in \mathbf B \\
&\Updownarrow& \\
\Gamma(\varphi + \psi) - \int (\varphi +\psi) \, d\mu &\geqslant& \Gamma(\varphi) - \int \varphi\, d\mu \qquad \forall \psi \in \mathbf B.
\end{eqnarray*}
This equivalence together with the variational principle in Theorem~\ref{thm:main}, the fact that $\mathbf B$ is a vector space and equation \eqref{eq:entropyB} imply that
$${\mathfrak h}(\mu)=\inf_{\psi\,\in\, \mathbf B} \,\Big\{\Gamma(\varphi + \psi)- \int (\varphi +\psi) \, d\mu\Big\} \geqslant \Gamma(\varphi) - \int \varphi\, d\mu.$$
Since the reverse inequality
$${\mathfrak h}(\mu) \leqslant \Gamma(\varphi) - \int \varphi\, d\mu$$
is an immediate consequence of \eqref{eq:var}, we conclude that $\mathcal T_\varphi(\Gamma) \subseteq \cE_\varphi(\Gamma)$. The second claim in the statement of Theorem~\ref{thm:tangent-functionals} is a consequence of \cite{Mazur} (see also \cite[page 12]{Phe93}), which ensures that the convex function $\Gamma$, acting on the separable Banach space $\mathbf B=C_b(X)$ or $\mathbf B=C_c(X)$, admits a unique tangent functional for every $\varphi$ in a residual subset of $\mathbf B$.

\section{Fr\'echet differentiability: Proof of Theorem~\ref{thm:differentiable-functionals}}\label{sec:diff-func}

In this section we will show the characterization of Fr\'echet differentiability of the pressure functional in terms of tangent functionals. \\

\noindent $(a) \Rightarrow (b)$. As $\Gamma$ is locally affine at $\varphi$, there exist (a unique) $\mu_\varphi \in \cP_a(X)$ (respectively, a signed probability measure if $X$ is locally compact and $\mathbf B=C_c(X)$) such that, for every $\psi_1,\,\psi_2\in \mathbf B$ whose norms are small enough, one has
$$\Gamma(\varphi+\psi_1)-\Gamma(\varphi+\psi_2)= \int (\psi_1-\psi_2) \, \mu_\varphi.$$
This implies that $\mathcal T_{\varphi+\psi}(\Gamma)=\{\mu_\varphi\}$ for every $\psi \in \mathbf B$ with small enough norm. Thus,
$$\lim_{\psi \,\to \,0}\,\sup\,\Big\{\|\mu - \mu_\varphi\| \,\colon \,\,\mu \in \mathcal T_{\varphi+\psi}(\Gamma)\Big\} = \lim_{\psi \,\to \,0}\,\{0\} = 0.$$

\noindent $(b) \Rightarrow (c)$. Assume that there exists a unique tangent functional $\mu_\varphi\in \mathcal T_\varphi(\Gamma)$ and that
\begin{equation}\label{eq:hyp}
\lim_{\psi \,\to \,0}\,\sup\,\Big\{\|\mu - \mu_\varphi\| \,\colon \,\,\mu \in \mathcal T_{\varphi+\psi}(\Gamma)\Big\}= 0.
\end{equation}
As $\mu_\varphi \in \mathcal T_\varphi(\Gamma)$, one has
$$\Gamma(\varphi+\psi) -\Gamma(\varphi) \, \geqslant \, \int \psi\, d\mu_\varphi \quad \quad \forall\, \psi \in \mathbf B.$$
Moreover, the uniqueness of the tangent functional $\mu_\varphi$ at $\varphi$ and Theorem~\ref{thm:tangent-functionals} imply that
$$\Gamma(\varphi)=\mathfrak h(\mu_\varphi) + \int \varphi\; d\mu_\varphi	\quad \quad \text{and} \quad \quad	\Gamma(\varphi)>\mathfrak h(\mu) + \int \varphi\; d\mu \quad \,\forall\, \mu \neq \mu_\varphi.$$
So, given $\psi \in \mathbf B$ and $\mu \in \mathcal T_{\varphi+\psi}(\Gamma)=\mathcal E_{\varphi+\psi}(\Gamma)$, one gets
\begin{align*}
0 & \leqslant \Gamma(\varphi+\psi) - \Gamma(\varphi) - \int \psi\, d\mu_\varphi = \mathfrak h(\mu) + \int (\varphi +\psi) \; d\mu - \Gamma(\varphi) - \int \psi\, d\mu_\varphi  \\
	& \leqslant \mathfrak h(\mu) + \int (\varphi +\psi) \; d\mu - \mathfrak h(\mu) - \int \varphi\; d\mu -\int \psi\, d\mu_\varphi \\
	& = \int \psi\, d\mu - \int \psi\, d\mu_\varphi  \,\,\leqslant \,\,\|\psi\|_\infty \, \,\|\mu-\mu_\varphi\|.
\end{align*}
Therefore, by assumption \eqref{eq:hyp}, one has
$$\lim_{\psi\, \to\, 0} \, \frac{1}{\|\psi\|_\infty}\,\left|\Gamma(\varphi+\psi) -\Gamma(\varphi) -\int \psi\, d\mu_\varphi\right| = 0.$$

\noindent $(c) \Rightarrow (a)$. The map $\mathfrak{h}$ is affine, but the pressure function $\Gamma$ is not associated to an underlying dynamics. Therefore, part of the argument to prove Theorem 6 of \cite{Wa2} (the analogue of Theorem~\ref{thm:differentiable-functionals} for the topological pressure) has to be adapted to the general setting we are dealing with. Assume that $\Gamma$ is Fr\'echet differentiable at $\varphi \in \mathbf B$. Then, as $\Gamma$ is convex, there is a unique tangent functional to $\Gamma$ at $\varphi$ (cf. \cite[Chapt. IV, \S 44]{RV73}), say $\mathcal T_\varphi(\Gamma) = \{\mu_\varphi\}$, and $\mu_\varphi$ satisfies
$$\lim_{\psi\,\to\, 0} \,\frac1{\|\psi\|_\infty} \,\Big|\Gamma(\varphi+\psi) - \Gamma(\varphi) -\int \psi\, d\mu_\varphi\Big| = 0.$$
By \eqref{eq:tf1}, one also has
$$\Gamma(\varphi+\psi) -\Gamma(\varphi) \,\geqslant \, \int \psi \, d\mu_\varphi \qquad  \forall \,\psi \in \mathbf B.$$
We are left to prove the reverse inequality for $\psi$ inside a neighborhood of $0$ in $\mathbf B$.

\begin{lemma}\label{le:Wa(i)}
Let $(\mu_n)_{n \, \in \, \mathbb{N}}$ be a sequence in $\cP_a(X)$ such that $\lim_{n \, \to \, +\infty}\,\mathfrak{h}(\mu_n) + \int \varphi\, d\mu_n = \Gamma(\varphi)$. Then $\lim_{n \, \to \, +\infty}\,\|\mu_n - \mu_\varphi\|=0.$
\end{lemma}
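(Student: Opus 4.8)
The plan is to prove Lemma~\ref{le:Wa(i)} by contradiction, extracting from a hypothetical non-convergent sequence a fixed separating potential and then exploiting Fr\'echet differentiability at $\varphi$. Suppose $\|\mu_n - \mu_\varphi\| \not\to 0$. By passing to a subsequence we may assume there is $\delta > 0$ with $\|\mu_n - \mu_\varphi\| \geqslant \delta$ for all $n$. By the definition of the total variation norm, for each $n$ choose $\psi_n \in \mathbf B$ with $\|\psi_n\|_\infty \leqslant 1$ and $\int \psi_n \, d\mu_n - \int \psi_n \, d\mu_\varphi \geqslant \delta/2$.

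The key step is to convert this into a quantitative lower bound on the Fr\'echet difference quotient along a suitably scaled direction. For a small parameter $t > 0$, using that $\mu_n \in \cE_\varphi(\Gamma)$ (by Theorem~\ref{thm:main}) and $\mu_\varphi \in \mathcal T_\varphi(\Gamma)$, I would estimate, for each $n$,
$$\Gamma(\varphi + t\psi_n) - \Gamma(\varphi) - \int t\psi_n \, d\mu_\varphi \;\geqslant\; \mathfrak h(\mu_n) + \int (\varphi + t\psi_n)\, d\mu_n - \Big(\mathfrak h(\mu_n) + \int \varphi\, d\mu_n\Big) - \int t\psi_n\, d\mu_\varphi + \big(\Gamma(\varphi) - \mathfrak h(\mu_n) - \textstyle\int \varphi\, d\mu_n\big),$$
which simplifies to $t\big(\int \psi_n\, d\mu_n - \int \psi_n\, d\mu_\varphi\big) + \varepsilon_n$ where $\varepsilon_n := \Gamma(\varphi) - \mathfrak h(\mu_n) - \int \varphi \, d\mu_n \to 0$ by hypothesis. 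So the quotient $\frac{1}{t}\big|\Gamma(\varphi + t\psi_n) - \Gamma(\varphi) - \int t\psi_n\, d\mu_\varphi\big|$ is at least $\delta/2 + \varepsilon_n/t$. Choosing $n = n(t)$ large enough (for each fixed small $t$) so that $|\varepsilon_{n(t)}| \leqslant t \,\delta/4$, we obtain elements $\eta_t := t\,\psi_{n(t)}$ with $\|\eta_t\|_\infty = t\,\|\psi_{n(t)}\|_\infty \leqslant t \to 0$ yet $\frac{1}{\|\eta_t\|_\infty}\big|\Gamma(\varphi + \eta_t) - \Gamma(\varphi) - \int \eta_t\, d\mu_\varphi\big| \geqslant \delta/4$, contradicting Fr\'echet differentiability \eqref{eq:Frechet} at $\varphi$. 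Hence $\|\mu_n - \mu_\varphi\| \to 0$.

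The main obstacle I anticipate is bookkeeping the interplay of the two limits: the $\psi_n$ come with no control as $n$ varies, so one cannot simply fix a direction and let $t \to 0$; instead the direction itself must be allowed to depend on $t$, and one must carefully arrange $n(t)$ so that the error term $\varepsilon_{n(t)}$ is dominated by $t$ (not merely small in absolute terms) — this is exactly what makes the normalized difference quotient stay bounded away from zero. A secondary point to handle cleanly is the case $\mathbf B = C_c(X)$ with $\mu_\varphi$ a signed measure: the total-variation norm and the duality pairing still behave as needed, but one should remark that the scaled test functions $t\psi_n$ remain in $\mathbf B$, which is immediate since $\mathbf B$ is a vector space and $\psi_n$ may be taken in $\mathbf B$ when computing the norm. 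Once the lemma is in place, the remaining direction $(c) \Rightarrow (a)$ follows by arguing that any sequence of near-optimizers in the variational principle for $\varphi + \psi$ (with $\|\psi\|_\infty$ small) must have its tangent functionals converging to $\mu_\varphi$, forcing the local affine identity \eqref{eq:FD}.
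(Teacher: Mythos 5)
Your argument is correct in substance and rests on exactly the same two ingredients as the paper's proof --- the variational inequality $\Gamma(\varphi+\psi)\geqslant \mathfrak h(\mu_n)+\int(\varphi+\psi)\,d\mu_n$ from Theorem~\ref{thm:main} and Fr\'echet differentiability at $\varphi$ --- but you package it as a contradiction, whereas the paper argues directly: fix $\varepsilon>0$, take $\delta>0$ from \eqref{eq:Frechet}, show that $\big|\int\psi\,d\mu_n-\int\psi\,d\mu_\varphi\big|<2\delta\varepsilon$ uniformly over $\|\psi\|_\infty\leqslant\delta$ for $n$ large, and rescale to get $\|\mu_n-\mu_\varphi\|<2\varepsilon$. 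The direct version avoids the extra bookkeeping of your $t$-dependent index $n(t)$, but your route is valid. Two slips, neither fatal. First, your displayed inequality has the wrong sign on the error term: the variational principle yields $\Gamma(\varphi+t\psi_n)-\Gamma(\varphi)-t\int\psi_n\,d\mu_\varphi\geqslant t\big(\int\psi_n\,d\mu_n-\int\psi_n\,d\mu_\varphi\big)-\varepsilon_n$, with $-\varepsilon_n$ rather than $+\varepsilon_n$; however, your subsequent insistence that $\varepsilon_{n(t)}\leqslant t\,\delta/4$ is exactly what the corrected sign requires, so the lower bound $\delta/4$ on the normalized quotient, and hence the contradiction with \eqref{eq:Frechet}, survives unchanged. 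Second, the parenthetical claim that $\mu_n\in\cE_\varphi(\Gamma)$ is unjustified --- the hypothesis only makes the $\mu_n$ approximate maximizers (were they exact, your $\varepsilon_n$ would vanish) --- but your computation never uses it: it only uses $\Gamma(\varphi+t\psi_n)\geqslant\mathfrak h(\mu_n)+\int(\varphi+t\psi_n)\,d\mu_n$, which holds for every element of $\cP_a(X)$.
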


\begin{proof} Given $\varepsilon > 0$, take $\delta > 0$ so that
$$\psi \in \mathbf B, \,\, \,\|\psi\|_\infty \,\leqslant\, \delta \quad \quad \Rightarrow \quad \quad 0 \,\leqslant\, \Gamma(\varphi + \psi) - \Gamma(\varphi) - \int \psi\, d\mu_\varphi \,\leqslant\, \varepsilon \,\|\psi\|_\infty.$$
For each $n \in \mathbb{N}$ consider $\varepsilon_n = \Gamma(\varphi) - \mathfrak{h}(\mu_n) - \int \varphi\, d\mu_n$, and let $N \in \mathbb{N}$ be such that for every $n \geqslant N$ one has $0 \leqslant \varepsilon_n < \varepsilon  \delta$. Therefore, if $\|\psi\|_\infty \leqslant \delta$ and $n \geqslant N$,
\begin{align*}
\int \psi\, d\mu_n - \int \psi\, d\mu_\varphi
    & = \Gamma(\varphi) + \int \psi\, d\mu_n - \Gamma(\varphi) - \int \psi\, d\mu_\varphi \\
	& = \mathfrak{h}(\mu_n) + \int \varphi\, d\mu_n + \varepsilon_n + \int \psi\, d\mu_n - \Gamma(\varphi) - \int \psi\, d\mu_\varphi\\
	& = \mathfrak{h}(\mu_n) + \int (\varphi + \psi)\, d\mu_n + \varepsilon_n - \Gamma(\varphi) - \int \psi\, d\mu_\varphi \\
	& \leqslant \Gamma(\varphi + \psi) + \varepsilon_n - \Gamma(\varphi) - \int \psi\, d\mu_\varphi \\
	& = \Gamma(\varphi + \psi) - \Gamma(\varphi) - \int \psi\, d\mu_\varphi + \varepsilon_n < 2 \delta \varepsilon.
\end{align*}
Since these estimates are also valid for $-\psi$, one has
$\Big|\int \psi\, d\mu_n - \int \psi\, d\mu_\varphi\Big| < 2  \delta \varepsilon$
for every $\|\psi\|_\infty \leqslant \delta$ and $n \geqslant N$.
Thus, if $n \geqslant N$,
\begin{eqnarray*}
\|\mu_n - \mu_\varphi\| &=& \sup \, \left\{\Big|\int \psi\, d\mu_n - \int \psi\, d\mu_\varphi\Big|\,\colon \, \|\psi\|_\infty \leqslant 1 \right\}\\
&=& \frac{1}{\delta}\,\sup \, \left\{\Big|\int \psi\, d\mu_n - \int \psi\, d\mu_\varphi\Big|\,\colon \, \|\psi\|_\infty \leqslant \delta \right\} \,< \, \frac{1}{\delta} \, 2  \delta \varepsilon  \,= \,2 \varepsilon.
\end{eqnarray*}
\end{proof}

Recall that, by Lemma~\ref{le:Wa(ii)}, one has $\mathfrak{h}(\mu) = \int \mathfrak{h}(m)\,d\mathbb{P}_\mu(m)$ for every $\mu \in \cP_a(X)$ whose decomposition in extreme points is $\mu = \int_{E_a(X)}\,m\, \,d\mathbb{P}_\mu(m)$. Thus,
\begin{equation}\label{eq:vp-extreme}
\Gamma(\varphi) = \sup \, \Big\{\mathfrak{h}(\mu) + \int \varphi \, d\mu \,\,\,| \,\,\, \mu \,\in \,E_a(X)\Big\}.
\end{equation}

\begin{lemma}\label{le:Wa(iii)}
The tangent functional $\mu_\varphi$ is an extreme point of $\cP_a(X)$ and
$$\Gamma(\varphi) > \sup \, \Big\{\mathfrak{h}(\mu) + \int \varphi \, d\mu \,\,\,| \,\,\, \mu \,\in \,E_a(X) \,\text{ and } \,\mu \neq \mu_\varphi\Big\}.$$
\end{lemma}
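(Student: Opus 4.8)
The plan is to deduce the two assertions in turn, using the identification $\cE_\varphi(\Gamma)=\mathcal T_\varphi(\Gamma)=\{\mu_\varphi\}$ (Theorem~\ref{thm:tangent-functionals} together with the uniqueness of the tangent functional at a point of Fr\'echet differentiability, already invoked above, which gives $\Gamma(\varphi)=\mathfrak h(\mu_\varphi)+\int\varphi\,d\mu_\varphi$), the affinity of $\mathfrak h$ (Lemma~\ref{le:Wa(ii)}), the extremal variational principle~\eqref{eq:vp-extreme}, and the convergence statement in Lemma~\ref{le:Wa(i)}. First I would check that $\mu_\varphi$ is an extreme point of $\cP_a(X)$: if $\mu_\varphi=\tfrac12(\nu_1+\nu_2)$ with $\nu_1,\nu_2\in\cP_a(X)$, then, since both $\mathfrak h$ and $\mu\mapsto\int\varphi\,d\mu$ are affine, $\Gamma(\varphi)=\mathfrak h(\mu_\varphi)+\int\varphi\,d\mu_\varphi=\tfrac12\sum_{i=1}^{2}\big(\mathfrak h(\nu_i)+\int\varphi\,d\nu_i\big)$; as each summand is $\leqslant\Gamma(\varphi)$ by~\eqref{eq:var}, both must equal $\Gamma(\varphi)$, whence $\nu_1,\nu_2\in\cE_\varphi(\Gamma)=\{\mu_\varphi\}$ and therefore $\nu_1=\nu_2=\mu_\varphi$.

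For the strict inequality I would argue by contradiction. By~\eqref{eq:vp-extreme} the supremum of $\mathfrak h(\mu)+\int\varphi\,d\mu$ over $\mu\in E_a(X)$ equals $\Gamma(\varphi)$ and, by the previous paragraph, it is attained at $\mu_\varphi$; so if the claimed strict inequality failed there would exist $\mu_n\in E_a(X)$ with $\mu_n\neq\mu_\varphi$ and $\mathfrak h(\mu_n)+\int\varphi\,d\mu_n\to\Gamma(\varphi)$. Lemma~\ref{le:Wa(i)} would then force $\|\mu_n-\mu_\varphi\|\to0$. On the other hand, the extreme points of $\cP_a(X)$ are precisely the $\{0,1\}$-valued regular finitely additive probability measures (if $0<\mu(A)<1$, then $\mu$ is a proper convex combination of the normalizations of $\mu(\,\cdot\cap A)$ and $\mu(\,\cdot\cap(X\setminus A))$, contradicting extremality), and any two distinct such measures $\mu\neq\nu$ are at total-variation distance $2$: choosing $A\in\mathfrak B$ with $\mu(A)=1$ and $\nu(A)=0$ and testing against $\mathbf 1_A-\mathbf 1_{X\setminus A}$ (or a continuous approximation of it, when $\mathbf B=C_b(X)$ or $C_c(X)$, obtained from the regularity of $\mu$ and $\nu$ and Urysohn's lemma) gives $\|\mu-\nu\|=2$, while $\|\mu-\nu\|\leqslant2$ trivially. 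Since $\mu_\varphi\in E_a(X)$, this yields $\|\mu_n-\mu_\varphi\|=2$ for every $n$, contradicting $\|\mu_n-\mu_\varphi\|\to0$; hence the supremum over $E_a(X)\setminus\{\mu_\varphi\}$ is strictly below $\Gamma(\varphi)$.

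The step I expect to be the main obstacle is the last one: that distinct extreme points of $\cP_a(X)$ are uniformly separated in the total-variation norm. This rests on the (standard, though in the finitely additive regular setting slightly delicate) identification of $E_a(X)$ with the $\{0,1\}$-valued measures — including checking that the conditioning construction $\mu\mapsto\mu(\,\cdot\cap A)/\mu(A)$ preserves regularity — and, when $\mathbf B\neq B_m(X)$, on the fact that the supremum defining the total-variation norm may still be computed over continuous test functions. Note that only a uniform positive lower bound for the pairwise distances between extreme points is actually needed, which somewhat lightens this bookkeeping; everything else follows directly from results established earlier in the paper.
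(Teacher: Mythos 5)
Your argument is correct, and it coincides with the paper's proof on the second assertion while handling the first one differently. For the strict inequality you do exactly what the paper does: take $\mu_n\in E_a(X)\setminus\{\mu_\varphi\}$ with $\mathfrak{h}(\mu_n)+\int\varphi\,d\mu_n\to\Gamma(\varphi)$, invoke Lemma~\ref{le:Wa(i)} to get $\|\mu_n-\mu_\varphi\|\to 0$, and contradict the fact that distinct extreme points of $\cP_a(X)$ lie at total-variation distance $2$; the only difference is that the paper simply cites \cite{AE80} for this separation fact, whereas you sketch its proof (extreme points are $\{0,1\}$-valued, two distinct such charges are separated by testing on a set where they disagree, with a Urysohn/regularity approximation when $\mathbf B\neq B_m(X)$) — your remarks that the conditioning $\mu\mapsto\mu(\cdot\cap A)/\mu(A)$ preserves regularity and that only a uniform lower bound on pairwise distances is needed are accurate, so this extra bookkeeping is sound, if more than the paper demands. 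Where you genuinely diverge is the extremality of $\mu_\varphi$: the paper extracts it from the same approximating sequence (the $\mu_n$ must eventually stabilize at an extreme point, which then equals $\mu_\varphi$), so extremality and the strict inequality come out of one argument; you instead prove extremality directly from the affinity of $\mathfrak{h}$ (Lemma~\ref{le:Wa(ii)}), the variational inequality \eqref{eq:var} and the identification $\cE_\varphi(\Gamma)=\mathcal T_\varphi(\Gamma)=\{\mu_\varphi\}$ from Theorem~\ref{thm:tangent-functionals}. Your route for this first claim is cleaner and more robust: it does not use Fr\'echet differentiability beyond the uniqueness of the tangent functional, nor the norm-separation of extreme points, whereas the paper's version gets both claims simultaneously from Lemma~\ref{le:Wa(i)} at the cost of leaning twice on the distance-$2$ fact.
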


\begin{proof}
Using \eqref{eq:vp-extreme}, one can choose a sequence $(\mu_n)_{n \,\in \,\mathbb{N}}$ of extreme finitely additive probability measures with
$$
\lim_{n \,\to\, +\infty} \,\, \big(\mathfrak{h}(\mu_n) + \int \varphi \, d\mu_n\big) = \Gamma(\varphi).$$
Then, by Lemma~\ref{le:Wa(i)}, one has $\lim_{n \, \to \, +\infty}\,\|\mu_n - \mu_\varphi\|=0.$ Since distinct extreme points in $\cP_a(X)$ have norm distance equal to $2$ (cf. \cite{AE80}), there is $N \in \mathbb{N}$ such that $\mu_n = \mu_N$ for every $n \geqslant N$. Therefore, $\mu_N=\mu_\varphi$, so $\mu_\varphi$ is an extreme point of $\cP_a(X)$.
In addition, observe that, since $\mathcal T_\varphi(\Gamma)=\mathcal E_\varphi(\Gamma) = \{\mu_\varphi\}$, the previous argument also shows that one cannot have
$$\Gamma(\varphi) = \sup \, \Big\{\mathfrak{h}(\mu) + \int \varphi \, d\mu \,\,\,| \,\,\, \mu \,\in \,E_a(X) \,\text{ and } \,\mu \neq \mu_\varphi\Big\}.$$
Thus, $\Gamma(\varphi) > \sup \, \Big\{\mathfrak{h}(\mu) + \int \varphi \, d\mu \,\,\,| \,\,\, \mu \,\in \,E_a(X) \,\text{ and } \,\mu \neq \mu_\varphi\Big\}.$
\end{proof}

\begin{lemma}\label{le:Wa(iv)}
There is a neighborhood $\mathcal{U}$ of $\mu_\varphi$ in the total variation norm such that
$$\mathfrak{h}(\mu_\varphi) > \sup \, \Big\{\mathfrak{h}(\mu)  \,\,\,| \,\,\, \mu \in \mathcal{U}, \,\, \mu \,\in \,E_a(X) \text{ and } \,\mu \neq \mu_\varphi\Big\}.$$
\end{lemma}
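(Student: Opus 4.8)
The plan is to deduce the lemma from Lemma~\ref{le:Wa(iii)}, which already provides a strict gap for the functional $\mu\mapsto\mathfrak h(\mu)+\int\varphi\,d\mu$ over the extreme points of $\cP_a(X)$ other than $\mu_\varphi$, and then to \emph{absorb} the term $\int\varphi\,d\mu$ by restricting attention to a small total variation ball around $\mu_\varphi$, on which $\int\varphi\,d\mu$ is close to $\int\varphi\,d\mu_\varphi$.

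First I would record the uniform gap coming from Lemma~\ref{le:Wa(iii)}: set
\[
c \,:=\, \Gamma(\varphi) \,-\, \sup\Big\{\mathfrak h(\mu) + \int\varphi\,d\mu \,:\, \mu\in E_a(X),\ \mu\neq\mu_\varphi\Big\},
\]
which is strictly positive by that lemma. I would also note that $\mathfrak h(\mu_\varphi)$ is finite: since $\mu_\varphi\in\cE_\varphi(\Gamma)$ one has $\Gamma(\varphi)=\mathfrak h(\mu_\varphi)+\int\varphi\,d\mu_\varphi$, and both $\Gamma(\varphi)$ and $\int\varphi\,d\mu_\varphi$ are finite, the latter because $\varphi\in\mathbf B$ is bounded.

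Next, for any extreme point $\mu\neq\mu_\varphi$, the definition of $c$ together with $\Gamma(\varphi)=\mathfrak h(\mu_\varphi)+\int\varphi\,d\mu_\varphi$ rearranges to
\[
\mathfrak h(\mu) \,\leqslant\, \mathfrak h(\mu_\varphi) - c + \Big(\int\varphi\,d\mu_\varphi - \int\varphi\,d\mu\Big),
\]
and the last parenthesis is at most $\|\varphi\|_\infty\,\|\mu-\mu_\varphi\|$ by the definition of the total variation norm on $\cP_a(X)$. Hence I would take $\mathcal U:=\{\mu\in\cP_a(X):\|\mu-\mu_\varphi\|<\eta\}$ with $\eta:=c/\bigl(2(\|\varphi\|_\infty+1)\bigr)>0$; then for every $\mu\in\mathcal U\cap E_a(X)$ with $\mu\neq\mu_\varphi$ one obtains $\mathfrak h(\mu)<\mathfrak h(\mu_\varphi)-c/2$, and taking the supremum over all such $\mu$ yields the desired strict inequality.

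I do not expect a genuine obstacle in this argument; the only point requiring care is that Lemma~\ref{le:Wa(iii)} supplies a uniform gap only for the \emph{sum} $\mathfrak h+\int\varphi\,d\mu$ — no such uniform gap can be expected for $\mathfrak h$ alone over all of $E_a(X)$, as $\mathfrak h$ is merely upper semi-continuous — which is precisely why the localization must be done in the total variation norm: there the map $\mu\mapsto\int\varphi\,d\mu$ is Lipschitz with constant $\|\varphi\|_\infty$ even for the possibly discontinuous potentials in $\mathbf B=B_m(X)$, whereas in the weak$^*$ topology it need not even be continuous.
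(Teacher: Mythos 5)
Your proof is correct and is essentially the paper's own argument: both take the gap $a$ from Lemma~\ref{le:Wa(iii)}, use $\Gamma(\varphi)=\mathfrak h(\mu_\varphi)+\int\varphi\,d\mu_\varphi$ (uniqueness of the equilibrium state), and absorb the difference of the integrals on a small neighborhood of $\mu_\varphi$. The only difference is cosmetic: the paper localizes via the condition $\bigl|\int\varphi\,d\mu-\int\varphi\,d\mu_\varphi\bigr|<a/2$ (which is open in total variation since $\mu\mapsto\int\varphi\,d\mu$ is $\|\varphi\|_\infty$-Lipschitz), whereas you take an explicit total variation ball contained in it.
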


\begin{proof}
Let $a=\Gamma(\varphi)- \sup \, \Big\{\mathfrak{h}(\mu) + \int \varphi \, d\mu \,\,\,| \,\,\, \mu \in E_a(X) \,\text{ and }\,\mu \neq \mu_\varphi\Big\}$ and $\mathcal{U}$ the neighborhood of $\mu_\varphi$ given by
$$\mathcal{U} = \Big\{\mu \in \cP_a(X) \, \colon \,\, \Big|\int \varphi \, d\mu - \int \varphi\, d\mu_\varphi\Big| < a/2\Big\}.$$
If $\mu \in \mathcal{U}$ is an extreme of $\cP_a(X)$ and $\mu \neq \mu_\varphi$, then
\begin{eqnarray*}
\mathfrak{h}(\mu) &\leqslant& \mathfrak{h}(\mu) + \int \varphi \, d\mu - \int \varphi \, d\mu_\varphi + \frac{a}{2} \,\,\leqslant \,\,\Gamma(\varphi) - a - \int \varphi \, d\mu_\varphi + \frac{a}{2} \\
&=& \Gamma(\varphi) - \int \varphi \, d\mu_\varphi - \frac{a}{2} = \mathfrak{h}(\mu_\varphi)- \frac{a}{2}
\end{eqnarray*}
where the last equality is due to the fact that $\mathcal T_\varphi(\Gamma)=\mathcal E_\varphi(\Gamma) = \{\mu_\varphi\}$.
\end{proof}

We are finally ready to show that $\Gamma$ is locally affine at $\varphi$. Let $a$ be as in the proof of Lemma~\ref{le:Wa(iv)}. Then, for every $\psi \in B_m(X)$ satisfying $\|\psi - \varphi\|_\infty < a/2$, one has
\begin{align*}
\sup\, \Big\{\mathfrak{h}(\mu) & +\int \psi\, d\mu \,\,\,| \,\,\, \mu \,\in \,E_a(X) \text{ and } \,\mu \neq \mu_\varphi\Big\} \\
&\leqslant  \Gamma(\varphi) - a + \|\psi - \varphi\|_\infty \leqslant \Gamma(\psi) - a + 2\,\|\psi - \varphi\|_\infty \,<\, \Gamma(\psi).
\end{align*}
Thus, by \eqref{eq:vp-extreme}, all such maps $\psi$ which are $a/2$-close to $\varphi$ have $\mu_\varphi$ as unique extreme $\Gamma$-equilibrium state in $\cP_a(X)$. In particular,
$$\|\psi - \varphi\|_\infty < a/2 \quad \quad \Rightarrow \quad \quad \Gamma(\psi) = \mathfrak{h}(\mu_\varphi) +\int \psi\, d\mu_\varphi.$$
So $\Gamma$ is locally affine at $\varphi$. This ends the proof of the first part of Theorem~\ref{thm:differentiable-functionals}.

Regarding the second list of equivalent assertions stated in Theorem~\ref{thm:differentiable-functionals}, firstly assume that $\Gamma$ is everywhere Fr\'echet differentiable. The previous equivalent conditions imply
that $\Gamma$ is locally affine at every $\varphi \in \mathbf B$, so, by the connectedness of the vector space $\mathbf B$, we conclude that $\Gamma$ is affine. Conversely, affine functions are clearly Fr\'echet differentiable. Hence items $(\bar{a})$ and $(\bar{b})$ are equivalent. Assume now that $\Gamma$ is affine. Then there exists $\mu \in \cP_a(X)$ such that for every $\varphi \in B$ one has
$$\Gamma(\varphi + \psi)-\Gamma(\varphi) = \int \psi\; d\mu \quad \quad \forall \, \psi \in B.$$
Thus $\mu$ is a tangent functional to every $\varphi \in \mathbf B$. As any element in $\cP_a(X)$ is determined by its integrals over $\mathbf B$, the previous equality implies that $\mu$ is the unique tangent functional at every $\varphi \in B$. So, $\bigcup_{\varphi\,\in\,\mathbf B} \,\mathcal T_\varphi(\Gamma) =\{\mu\}$, and $(\bar{a})$ implies $(\bar{c})$. Finally, condition $(\bar{c})$ implies $(\bar{a})$ due to the corresponding local property $(c) \Rightarrow (a)$.
This completes the proof of Theorem~\ref{thm:differentiable-functionals}.

\section{Pressure, entropy and zero temperature limits}\label{sse:top-pressure}

We start by defining the topological and free energy for a continuous transformation of a compact metric space, along with a recollection of some of their properties.

\subsection{{{Entropy and pressure}}}\label{sse:topol-pressure}

Let $f \,\colon X \to X$ be a continuous transformation of a compact metric space $(X,d)$. Given $n \in \mathbb{N}$, define the dynamical distance $d_n \colon X \times X \, \to \,[0,+\infty)$ by
$$d_n(x,z)=\max\,\Big\{d(x,z),\,d(f(x),f(z)),\,\dots,\,d(f^{n}(x),f^{n}(z))\Big\}$$
which generates the same topology as $d$. For every $x \in X$, $n \in \mathbb{N}$ and $\varepsilon > 0$, we denote by $B^f_n(x,\varepsilon)$ the open ball centered at $x$ with radius $\varepsilon$ with respect to the metric $d_n$.
Having fixed $n \in \mathbb{N}$ and $\varepsilon>0$, we say that a set $E \subset X$ is $(n,\varepsilon)$--separated by $f$ if
$d_n(x,z) \geqslant \varepsilon \quad \quad \forall \, x \neq z \,\in \,E.$
Denote by $s_n(f,\varepsilon)$ the maximal cardinality of all $(n,\varepsilon)$--separated subsets of $X$ by $f$. Due to the compactness of $X$, the number $s_n(f,\varepsilon)$ is finite.
The \emph{topological entropy} of $f$ is defined by
$$h_{\text{top}}(f) = \lim_{\varepsilon \, \to \, 0^+}\,\limsup_{n\, \to\, +\infty}\,\frac{1}{n}\,\log \,s_n(f,\varepsilon).$$

More generally, given a continuous map  $\varphi \,\colon X \to \mathbb{R}$  (also called a potential), the \emph{topological pressure} of $f$ and $\varphi$ is defined by
\begin{equation}\label{eqPf1}
P_{\text{top}}(f,\varphi) = \lim_{\varepsilon \, \to \, 0^+}\,\limsup_{n\, \to\, +\infty}\,\frac{1}{n}\,\log \,P_n(f,\varphi,\varepsilon)
\end{equation}
where, for every $n \in \mathbb{N}$,
\begin{equation}\label{eqPf2}
P_n(f,\varphi,\varepsilon)= \sup_{E}\,\Big\{\sum_{x \,\in\, E}\,e^{S_n^f \,\varphi(x)}\,\colon \text{ $E \subset X$ is $(n,\varepsilon)$-separated}\Big\}
\end{equation}
and
$S_n^f \,\varphi(x) = \varphi(x) + \varphi(f(x)) + \cdots + \varphi(f^n(x)).$
This way, one assigns to each point $x \in X$ the weight $e^{S^f_n\varphi(x)}$ determined by the potential $\varphi$ along the block of the first $n$ iterates of $f$ at $x$. In particular, $P_{\text{top}}(f,0) = h_{\text{top}}(f)$.

As $X$ is compact, $C(X)$ is a subspace of $B_m(X)$ where $\|.\|_\infty$ is the norm of the uniform convergence. The pressure map
$$P_{\text{top}}(f, .) \, \colon \,\,C(X) \quad \to \quad \mathbb{R}\cup\{+\infty\}$$
satisfies, for every $\varphi, \, \psi \in C(X)$ and constant $c \in \mathbb{R}$, the following properties \cite{Wa}:

\begin{enumerate}
\item  $h_{\text{top}}(f) + \min \,\varphi \,\,\leqslant \,\,P_{\text{top}}(f, \varphi) \,\,\leqslant \,\,h_{\text{top}}(f) + \max \,\varphi.$
\medskip
\item $P_{\text{top}}(f, .)$ is either finite valued or constantly $+\infty$.
\medskip
\item If $P_{\text{top}}(f, .) < +\infty$, then $P_{\text{top}}(f, .)$ is convex.
\medskip
\item $P_{\text{top}}(f, \varphi + c) = P_{\text{top}}(f, \varphi) + c$.
\medskip
\item $\varphi \leqslant \psi \quad \Rightarrow \quad P_{\text{top}}(f, \varphi)  \leqslant P_{\text{top}}(f, \psi).$
\medskip
\item $P_{\text{top}}(f, \varphi + \psi\circ f - \psi) = P_{\text{top}}(f, \varphi)$.
\medskip
\item If $P_{\text{top}}(f, .) < +\infty$, then $\left|P_{\text{top}}(f, \varphi) - P_{\text{top}}(f, \psi)\right| \leqslant \|\varphi - \psi\|_\infty.$
\end{enumerate}

\smallskip

We observe that \eqref{eqPf1} and \eqref{eqPf2} can be used to define a pressure function on the space $B_m(X)$ of bounded potentials which, by some abuse of notation, we still denote by $P_{\text{top}}(f, .) \, \colon \,\,B_m(X) \to  \mathbb{R}\cup\{+\infty\}$. To avoid any confusion, in order to distinguish the pressure functions we will mention their domains.

Denote by $\mathcal{P}_f(X)$ the space of $f$-invariant Borel probability measures on $X$ endowed with the weak$^*$ topology.
Given $\mu \in \mathcal{P}_f(X)$ and a continuous potential $\varphi: \, X \to \mathbb{R}$, the \emph{free energy} of $f$, $\mu$ and $\varphi$ is given by
$$P_\mu(f,\varphi) = h_{\mu}(f) + \int \varphi \,d\mu$$
where $h_{\mu}(f)$ is the \emph{metric entropy} of $f$ with respect to $\mu$ (definition and properties may be read in \cite[Chapter 4]{Wa}).
A measure $\mu \in \mathcal{P}_f(X)$ is called an \emph{equilibrium state} for 
the potential $\varphi$ if
$P_{\mu}(f,\varphi) = \sup_{\nu \, \in \, \mathcal{P}_f(X)} \,\,\left\{h_{\nu}(f) + \int \varphi \,d\nu\right\}.$

\subsubsection{\textbf{\emph{Star-entropy}}}\label{sse:star}
The lack of upper semi-continuity of the entropy map has led some authors to regularize the notion of metric entropy. For instance, given $\mu \in \mathcal{P}_f(X)$, the concept of \emph{star-entropy} was introduced in \cite{New89} and later explored by Viana and Yang in \cite{VY}, being defined by
$$h^*_\mu(f)=\sup  \Big\{\limsup_{n  \,\to \, +\infty} \,h_{\mu_n}(f) \,\,|\,\,(\mu_n)_{n \,\in\, \mathbb{N}} \text{ is a sequence in $\mathcal{P}_f(X)$ with $\lim_{n  \,\to  \,+\infty}  \mu_n = \mu$}\Big\}.$$
It is known that, for every $\mu \in \mathcal{P}_f(X)$, one has
\begin{equation}\label{eq:Buzzi}
h_\mu(f) \,\,\leqslant \,\,h^*_\mu(f) \,\,\leqslant \,\,h_\mu(f) + h_{loc}(f)
\end{equation}
where $h_{loc}(f)$ stands for the local entropy of $f$, defined by
$$h_{loc}(f) = \lim_{\varepsilon \, \to \, 0^+} \,\lim_{\delta \, \to \, 0^+} \, \limsup_{n \, \to \, +\infty} \,\frac{1}{n}\,\sup_{x \, \in \, X}\, \log \, s_n(f,\delta,B^f_n(x,\varepsilon))$$
and $s_n(f,\delta,B^f_n(x,\varepsilon))$ denotes the maximal cardinality of an $(n, \delta)$-separated subset of $B^f_n(x,\varepsilon)$.  The first inequality of \eqref{eq:Buzzi} is a straightforward consequence of the definition of $h^*$, while the second was proved in \cite{New89} (see also \cite[Appendix B]{Buz}), from which we conclude that $h_{loc}(f)$ bounds the defect in upper semi-continuity of the map $\mu \in \mathcal{P}_f(X)\, \mapsto \, h_\mu(f)$. The star-entropy function $h^* :\cP_f(X) \to \mathbb R$ is related to the entropy structures of Boyle and Downarowicz (cf.~\cite{BD}) since $h^*$ is precisely the upper semi-continuous envelope of the metric entropy. More precisely, when the topological entropy of $f$ is finite the map $h^*$ is bounded from above by $T = \text{ the topological entropy of $f$}$ and one has (see \cite[p. 466--467]{DN})
\begin{equation}\label{eq:envelope}
h^* = \inf\,\Big\{T: \cP_f(X) \to \mathbb R \,\,|\,\, \text{$T$ is continuous and} \, \, T(\mu)\geqslant h_\mu(f) \;\; \forall \mu\in \cP_f(X)\Big\}.
\end{equation}
The advantage of considering the star-entropy is that the function $\mu \in \mathcal{P}_f(X) \mapsto h^*_\mu(f)$ is upper semi-continuous when $\mathcal{P}_f(X)$ is endowed with the weak$^*$-topology. In particular, defining the star-pressure by
$$\Ptop^*(f,\varphi) =\sup_{\mu \,\in \,\mathcal{P}_f(X)} \,\left\{ h^*_\mu(f) + \int \varphi \, d\mu \right\}$$
one guarantees that there always exists an $f$-invariant probability measure which attains the supremum. Yet, the behavior of the star-entropy function may differ substantially from the usual metric entropy map (we refer the reader to \cite{Rt2002} for examples of smooth interval maps without an invariant probability measure with maximal entropy), and so the corresponding maximum values and maximal measures might fail to describe standard physical quantities.

Denote the topological entropy of $f$ by $h_{\mathrm{top}}(f)$ and assume that it is finite. Let ${h}^{**}$ be the upper semi-continuous concave envelope of the metric entropy, defined in $\mathcal{P}_f(X)$ by
\begin{equation}\label{eq:concave-envelope}
h^{**} = \inf\Big\{T: \cP_f(X) \to \mathbb R \,\,|\,\, \text{$T$ is continuous, affine and} \, \, T(\mu)\geqslant h_\mu(f) \;\; \forall \mu\in \cP_f(X)\Big\}.
\end{equation}
Thus
$$h_\mu(f) \,\,\leqslant \,\, {h}^*_\mu(f) \,\,\leqslant \,\,h^{**}_\mu(f) \,\,\leqslant \,\, h_{\mathrm{top}}(f) \quad \forall \mu\in \cP_f(X).$$

\subsection{Ergodic optimization}\label{sec:erg-opt}

Given $\varphi \in C(X)$, the map $\mu  \in  \cP_f(X) \, \mapsto \, \int \varphi\, d\mu$ is continuous and defined on a compact metric space. Hence it has a maximum, which is realized by $f$-invariant probability measures. These are referred to as \emph{$\varphi$-maximizing probability measures}, and may be not unique. It is therefore useful to find $\varphi$-maximizing probability measures which are the most chaotic, meaning those which carry larger metric entropy. It is known that, if the metric entropy map is upper semi-continuous, then the $\varphi$-maximizing probability measures obtained through zero-temperature limits have this property (cf. \cite[Theorem~4.1]{Je}). More precisely, on the one hand, if
$\htop(f)< +\infty$, then
\begin{align*}\frac1t \Ptop(f,t\varphi) & = \sup_{\nu\,\in\, \cP_f(X)}\, \Big\{\frac1t \,h_{\nu}(f) + \int \varphi \, d\nu \Big\} \\
&
\underset{t\to +\infty}{\longrightarrow} \quad \sup_{\nu\,\in\, \cP_f(X)} \,\int \varphi\, d\nu = \max_{\nu\,\in\, \cP_f(X)} \,\int \varphi\, d\nu.
\end{align*}
On the other hand, if for each $t>0$ large enough there exists a unique equilibrium state $\mu_t$ for $f$ with respect to $t\varphi$, then any weak$^*$ accumulation probability measure $\mu\in\cP_f(X)$ of $(\mu_t)_{t\,>\,0}$ as $t$ goes to $+\infty$ is a $\varphi$-maximizing probability measure.

Two difficulties to carry on this approach for dynamical systems which may not be expansive is that the entropy function may
not be upper semi-continuous, and so equilibrium states likely fail to exist at sufficiently small temperatures. We refer the reader to \cite{VV10} for a class of non-uniformly expanding maps for which the entropy function is not upper semi-continuous, and for which one can only ensure that $t\varphi$ has an equilibrium state for small values of the parameter $t$. The results in Theorem~\ref{thm:second-main} guarantee a way to bypass these issues using the variational
$\mathfrak h$-entropy, which still satisfies the variational principle with respect to the classical pressure function.

\section{Variational principles: Proof of Theorem~\ref{thm:second-main} }\label{se:thermo-formalism}

In the mid seventies the thermodynamic formalism was brought from statistical mechanics to dynamical systems by the pioneering work of Sinai, Ruelle and Bowen \cite{Bo75}, which established a powerful correspondence between one-dimensional lattices and uniformly hyperbolic dynamics and conveyed several notions from one setting to the other. The success of this approach ultimately relies on a variational principle for the topological pressure, along with the construction of equilibrium states as the class of pressure maximizing invariant probability measures.
In this section we first show that Theorem~\ref{thm:main} extends the classical thermodynamic formalism for continuous maps on compact metric spaces, and then we complete the proof of Theorem~\ref{thm:second-main}.

\subsection{Classical variational principle}\label{sec:classic-1}

Given a continuous transformation $f: \,X \to X$ acting on a compact metric space $(X,d)$, the variational principle (cf. \cite[\S 9.3]{Wa}) states that, given a continuous potential $\varphi: \, X \to \mathbb{R}$,
$$\Ptop(f,\varphi)=\sup_{\mu\,\in\,\mathcal{P}_f(X)} \,\Big\{h_\mu(f) + \int_X \,\varphi \, d\mu \Big\}.$$
Moreover, the previous least upper bound coincides with the supremum evaluated on the set of ergodic probability measures (cf. \cite[Corollary 9.10.1]{Wa}). A probability measure which attains the maximal value is called an equilibrium state for $f$ and the potential $\varphi$. For instance, an equilibrium state for $f$ and the potential $\varphi\equiv 0$ is a measure with maximal entropy.

In the event that $\htop(f) < + \infty$ and the entropy function $\mu \in \mathcal{P}_f(X)\, \mapsto\, h_\mu(f)$ is upper semi-continuous, the metric entropy satisfies
(see \cite[Theorem 9.12]{Wa})
\begin{align*}\label{eq:dual}
h_\mu(f) &= \inf_{\varphi \in C(X)} \Big\{\Ptop(f,\varphi) - \int \varphi  d\mu \Big\} \\
&= \sup  \big\{\limsup_{n \, \to \, +\infty}\,h_{\mu_n} \,\,|\,\,(\mu_n)_{n \,\in\, \mathbb{N}} \text{ is a sequence in $\mathcal{P}_f(X)$ with $\,\,\lim_{n \, \to \, +\infty}  \mu_n = \mu$}\big\}.
\end{align*}
In this case, an easy computation using \cite[Theorem~9.12]{Wa} also yields
\begin{equation*}\label{eq:VE}
h_\mu(f) = \inf_{\varphi \,\in \,\Lambda} \,\int \varphi \, d\mu
\end{equation*}
where
$\Lambda=\{\Ptop(f,\varphi) - \varphi \colon \,\varphi \in C(X)\}.$

\subsection{New variational principle}\label{sse:theorem 5}
Keeping the classical notion of topological pressure and summoning Theorem~\ref{thm:main}, we replace the entropy map (metric or star) acting on the space $\mathcal{P}_f(X)$ by a more general real valued function ${\mathfrak h}_f$ whose domain is the space $\mathcal{P}(X)$ of the Borel probability measures on $X$. More precisely, assume that $\htop(f) < +\infty$; then $\Ptop(f, .): \,C(X) \to \mathbb{R}$ is a pressure function (cf.  Section~\ref{sse:top-pressure})
to which we may apply Theorem~\ref{thm:main}. This way, we conclude that the map ${\mathfrak h}_f :\, \mathcal{P}(X) \to \mathbb{R}$ given by
$${\mathfrak h}_\mu(f) = \inf_{\varphi \, \in \, \mathcal{A}_{\Ptop}}\, \int \varphi \, d\mu$$
where $\mathcal{A}_{\Ptop} = \left\{\varphi \in C(X) :\, \Ptop(f,-\varphi) \leqslant 0 \right\}$
is upper semi-continuous, satisfies
\begin{equation}\label{eq:second-dual-2}
{\mathfrak h}_\mu(f) = \inf_{\varphi \, \in \, C(X)}\, \left\{\Ptop(f, \varphi) - \int \varphi \, d\mu\right\}\quad \quad \forall \, \mu \in \mathcal{P}(X)
\end{equation}
and
\begin{equation}\label{eq:Pgamma}
\Ptop(f,\varphi) = \max_{\mu \,\in\, \mathcal{P}(X)}
	\,\Big\{{\mathfrak h}_\mu(f) + \int \varphi \, d\mu \Big\} \quad \quad \forall \,\varphi \in C(X).
\end{equation}

It is immediate from \eqref{eq:second-dual-2} that ${\mathfrak h}_\mu(f)\leqslant \htop(f)$ for every $\mu \in \mathcal{P}(X)$. Moreover, using the aforementioned strategy, it is clear that, given $\varphi \in C(X)$, there exists $\mu_\varphi \in \mathcal{P}(X)$ such that $\Ptop(f,\varphi) = {\mathfrak h}_{\mu_\varphi}(f) + \int \varphi \, d\mu_\varphi.$
In the special case of $\varphi \equiv 0$ one gets both the equality
\begin{equation*}\label{eq:Hgamma}
\htop(f) = \max_{\mu\, \in\, \mathcal{P}(X)} \,{\mathfrak h}_\mu(f)
\end{equation*}
and $\mu_0 \in \mathcal{P}(X)$ where ${\mathfrak h}_f$ attains its maximum value $\htop(f)$. This ends the proof of Theorem~\ref{thm:second-main}.

\begin{corollary}\label{cor:2dyn}
Given $\varphi \in C(X)$, every $\mu_\varphi \in \mathcal{P}(X)$ attaining the maximum at \eqref{eq:Pgamma} is $f$-invariant.
\end{corollary}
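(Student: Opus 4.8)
The plan is to exploit the fact that attaining the maximum in \eqref{eq:Pgamma} forces $\varphi$ to realize the infimum defining ${\mathfrak h}_{\mu_\varphi}(f)$ in \eqref{eq:second-dual-2}, and then to perturb $\varphi$ by coboundaries, using the invariance property $\Ptop(f,\varphi+\psi\circ f-\psi)=\Ptop(f,\varphi)$ (property (6) of the topological pressure recalled in Section~\ref{sse:topol-pressure}).

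First I would fix $\varphi\in C(X)$ and $\mu_\varphi\in\mathcal P(X)$ with $\Ptop(f,\varphi)={\mathfrak h}_{\mu_\varphi}(f)+\int\varphi\,d\mu_\varphi$. Combining this with the expression \eqref{eq:second-dual-2} for ${\mathfrak h}_{\mu_\varphi}(f)$ as an infimum over $\eta\in C(X)$, one gets
$$\Ptop(f,\varphi)-\int\varphi\,d\mu_\varphi \;=\; {\mathfrak h}_{\mu_\varphi}(f)\;\leqslant\; \Ptop(f,\eta)-\int\eta\,d\mu_\varphi \qquad \forall\,\eta\in C(X),$$
so $\varphi$ itself attains this infimum. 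Next I would insert, for an arbitrary $\psi\in C(X)$ and $t\in\mathbb R$, the test function $\eta=\varphi+t\,(\psi\circ f-\psi)$. By property (6) one has $\Ptop(f,\eta)=\Ptop(f,\varphi)$, and the displayed inequality becomes
$$0\;\leqslant\; -\,t\int(\psi\circ f-\psi)\,d\mu_\varphi \qquad \forall\, t\in\mathbb R,\ \forall\,\psi\in C(X).$$
Letting $t$ range over both positive and negative reals yields $\int\psi\circ f\,d\mu_\varphi=\int\psi\,d\mu_\varphi$ for every $\psi\in C(X)$, which by the Riesz representation theorem gives $f_*\mu_\varphi=\mu_\varphi$, i.e. $\mu_\varphi$ is $f$-invariant.

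There is essentially no serious obstacle here; the only point that requires care is recognizing that the coboundary-invariance of $\Ptop$ — which is \emph{not} among the abstract axioms (C$_1$)--(C$_3$) for a general pressure function, as remarked after Definition~\ref{def:pressure-function} — is available in the present concrete setting and is exactly what converts ``$\mu_\varphi$ maximizes'' into ``$\mu_\varphi$ annihilates all coboundaries''. One could equivalently phrase the same computation using the directional derivative of $t\mapsto\Ptop(f,\varphi+t(\psi\circ f-\psi))$ at $t=0$, but the elementary inequality argument above is shorter and avoids invoking differentiability.
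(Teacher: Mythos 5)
Your proof is correct and follows essentially the same route as the paper: both arguments perturb $\varphi$ by coboundaries $\pm(\psi\circ f-\psi)$ and exploit the identity $\Ptop(f,\varphi+\psi\circ f-\psi)=\Ptop(f,\varphi)$ to show that $\int(\psi\circ f-\psi)\,d\mu_\varphi$ must vanish for every $\psi\in C(X)$. The only (harmless) difference is that you work directly with the dual formula \eqref{eq:second-dual-2}, so you never need the auxiliary maximizing measures $\mu_1,\mu_2$ for the perturbed potentials that the paper introduces; you use only the inequality half of the variational principle, which slightly streamlines the argument.
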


\begin{proof}
Recall that $\mu \in \mathcal{P}(X)$ is said to be $f$-invariant if $\int (\psi \circ f) \, d\mu = \int \psi \, d\mu$ for every $\psi$ in $C(X)$. Fix $\mu_\varphi \in \mathcal{P}(X)$ such that $\Ptop(f,\varphi) = {\mathfrak h}_{\mu_\varphi}(f) + \int \varphi \, d\mu_\varphi$, and consider $\psi \in C(X)$. By the variational relation \eqref{eq:Pgamma} applied to both $\varphi + \psi \circ f - \psi$ and $\varphi + \psi - \psi \circ f$ we may take $\mu_1, \, \mu_2 \in \mathcal{P}(X)$ such that
$$\Ptop(f,\varphi + \psi \circ f - \psi) = {\mathfrak h}_{\mu_1}(f) + \int \varphi\, d\mu_1 + \int (\psi\circ f) \, d\mu_1 - \int \psi \, d\mu_1$$
and
$$\Ptop(f,\varphi + \psi - \psi \circ f) = {\mathfrak h}_{\mu_2}(f) + \int \varphi \, d\mu_2 + \int \psi \, d\mu_2 - \int (\psi\circ f) \, d\mu_2.$$
Using the equalities
$$\Ptop(f, \varphi + \psi \circ f - \psi) = \Ptop(f, \varphi) = \Ptop(f, \varphi + \psi - \psi \circ f)$$
(cf. Subsection~\ref{sse:top-pressure}) together with \eqref{eq:Pgamma}, we conclude that
\begin{eqnarray*}
{\mathfrak h}_{\mu_\varphi}(f) + \int \varphi \, d\mu_\varphi &=& {\mathfrak h}_{\mu_1}(f) + \int \varphi \, d\mu_1 + \int (\psi\circ f) \, d\mu_1 - \int \psi \, d\mu_1  \\
&\geqslant&  {\mathfrak h}_{\mu_\varphi}(f) + \int \varphi \, d\mu_\varphi + \int (\psi \circ f) \, d\mu_\varphi - \int \psi \, d\mu_\varphi
\end{eqnarray*}
so $ \int (\psi \circ f) \, d\mu_\varphi - \int \psi \, d\mu_\varphi \leqslant 0.$ In a similar way, we deduce that
\begin{eqnarray*}
{\mathfrak h}_{\mu_\varphi}(f) + \int \varphi \, d\mu_\varphi &=& {\mathfrak h}_{\mu_2}(f) + \int \varphi \, d\mu_2 + \int \psi \, d\mu_2  - \int (\psi\circ f) \, d\mu_2 \\
&\geqslant&  {\mathfrak h}_{\mu_\varphi}(f) + \int \varphi \, d\mu_\varphi + \int \psi \, d\mu_\varphi - \int (\psi \circ f) \, d\mu_\varphi
\end{eqnarray*}
hence $\int \psi \, d\mu_\varphi - \int (\psi \circ f) \, d\mu_\varphi \leqslant 0.$
\end{proof}

A straightforward consequence one draws from the classical variational principle and the fact that the metric entropy is always non-negative is that the pressure function determines $\mathcal{P}_f(X)$, in the sense that, if $h_{\mathrm{top}}(f) < +\infty$, then (cf. \cite[Theorem 9.11]{Wa})
$$\mu \, \in \, \mathcal{P}_f(X) \quad \Leftrightarrow \quad \int \varphi \, d\mu \,\,\leqslant\,\, \Ptop(f, \varphi) \quad \forall \,\varphi \in C(X).$$
Therefore, by \eqref{eq:second-dual-2}, if $h_{\mathrm{top}}(f) < +\infty$ and $\mu$ belongs to $\mathcal{P}(X)$ then
$$\mu \in \mathcal{P}_f(X) \quad \Leftrightarrow \quad {\mathfrak h}_\mu(f) \geqslant 0.$$

\subsection{Linking ${\mathfrak h}$ and $h^{**}$}

Here we prove the second part of Theorem~\ref{thm:second-main}, relating the $h^{**}$-entropy (defined by \eqref{eq:concave-envelope}) with the ${\mathfrak h}$-entropy function as a consequence of the following result.

\begin{proposition}\label{prop:max-gamma}
Let $f: \,X \to X$ be a continuous transformation acting on a compact metric space $X$ with $\htop(f) < +\infty$. Then:
\begin{enumerate}
\item[\emph{(a)}] $\Ptop(f, \varphi) = \max_{\mu \,\in \,\mathcal{P}_f(X)}\,\,\Big\{h^*_f(\mu) + \int \varphi \, d\mu\Big\} \quad \forall\, \varphi \in C(X).$
\medskip
\item[\emph{(b)}] $h_\mu(f) \,\,\leqslant \,\, h^*_\mu(f) \,\,\leqslant \,\, h^{**}_\mu(f) \,\,=\,\, {\mathfrak h}_\mu(f) \quad \forall \, \mu \in \mathcal{P}_f(X).$
\end{enumerate}
\end{proposition}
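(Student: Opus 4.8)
The plan is to handle the two items separately, deducing (a) from the classical variational principle together with the upper semi-continuity of $h^*$, and then establishing the decisive equality ${\mathfrak h}_\mu(f)=h^{**}_\mu(f)$ in (b) by identifying both sides with the same infimum over continuous affine functions dominating the metric entropy. For (a), I would first recall that, since $h^*_\mu(f)\geqslant h_\mu(f)$ for every $\mu\in\cP_f(X)$, the classical variational principle gives $\Ptop(f,\varphi)\leqslant\sup_{\mu\in\cP_f(X)}\{h^*_\mu(f)+\int\varphi\,d\mu\}$; conversely, using \eqref{eq:Buzzi} and the definition of $h^*$ via sequences $\mu_n\to\mu$ with $\limsup h_{\mu_n}(f)=h^*_\mu(f)$, together with the continuity of $\nu\mapsto\int\varphi\,d\nu$, one gets $h^*_\mu(f)+\int\varphi\,d\mu\leqslant\limsup_n(h_{\mu_n}(f)+\int\varphi\,d\mu_n)\leqslant\Ptop(f,\varphi)$, so the supremum does not exceed $\Ptop(f,\varphi)$. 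Finally, since $\mu\mapsto h^*_\mu(f)$ is upper semi-continuous and $\cP_f(X)$ is compact, the supremum is attained, which upgrades it to a maximum.

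For (b), the first two inequalities $h_\mu(f)\leqslant h^*_\mu(f)\leqslant h^{**}_\mu(f)$ are immediate: the first is part of \eqref{eq:Buzzi}, and the second follows because every continuous affine $T$ with $T\geqslant h_\cdot(f)$ is in particular continuous, so the infimum \eqref{eq:concave-envelope} defining $h^{**}$ is taken over a smaller family than the one \eqref{eq:envelope} defining $h^*$. The heart of the matter is the equality $h^{**}_\mu(f)={\mathfrak h}_\mu(f)$. The key observation is that, by property (4) and (7) of the pressure in Subsection~\ref{sse:topol-pressure} and its convexity (property (3)), each map $\mu\mapsto\Ptop(f,\varphi)-\int\varphi\,d\mu$ is continuous and affine on $\cP(X)$, and by the classical variational principle it dominates $h_\cdot(f)$ on $\cP_f(X)$; combined with \eqref{eq:second-dual-2}, which expresses ${\mathfrak h}_\mu(f)$ precisely as the infimum of these affine maps, this shows ${\mathfrak h}_\mu(f)\geqslant h^{**}_\mu(f)$ once we check that the family $\{\Ptop(f,\varphi)-\int\varphi\,d\cdot:\varphi\in C(X)\}$ is cofinal (from above) in the family of all continuous affine functions dominating $h_\cdot(f)$ on $\cP_f(X)$ — equivalently, that every such affine function is of this form up to an additive error, which is where one invokes that continuous affine functions on $\cP(X)$ are, by the Riesz representation theorem, exactly the maps $\mu\mapsto c+\int\varphi\,d\mu$ with $\varphi\in C(X)$, $c\in\RR$, and that the constraint $c+\int\varphi\,d\mu\geqslant h_\mu(f)$ on $\cP_f(X)$ forces $c+\Ptop(f,-\varphi)\geqslant\sup_{\mu\in\cP_f(X)}\{h_\mu(f)-\int\varphi\,d\mu\}$... wait, more carefully: writing such an affine function as $\mu\mapsto c-\int\psi\,d\mu$, domination $h_\mu(f)\leqslant c-\int\psi\,d\mu$ for all $\mu\in\cP_f(X)$ means $c\geqslant\sup_{\mu\in\cP_f(X)}\{h_\mu(f)+\int\psi\,d\mu\}=\Ptop(f,\psi)$ by the classical variational principle, hence $c-\int\psi\,d\mu\geqslant\Ptop(f,\psi)-\int\psi\,d\mu$, so every continuous affine dominating function lies above some member of our distinguished family, giving $h^{**}_\mu(f)\geqslant\inf_{\psi\in C(X)}\{\Ptop(f,\psi)-\int\psi\,d\mu\}={\mathfrak h}_\mu(f)$ by \eqref{eq:second-dual-2}. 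Together with the reverse inequality, which follows because each $\mu\mapsto\Ptop(f,\psi)-\int\psi\,d\mu$ is itself continuous, affine (on all of $\cP(X)$), and dominates $h_\cdot(f)$ on $\cP_f(X)$, hence is one of the competitors in \eqref{eq:concave-envelope}, we obtain $h^{**}_\mu(f)={\mathfrak h}_\mu(f)$ for every $\mu\in\cP_f(X)$.

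The main obstacle I anticipate is the careful bookkeeping around the two envelope formulas \eqref{eq:envelope} and \eqref{eq:concave-envelope}: one must be precise that $h^{**}$ is defined by an infimum over \emph{affine} continuous functions while $h^*$ uses all continuous functions, and that in \eqref{eq:second-dual-2} the infimum runs over $\varphi\in C(X)$ (equivalently, over the affine functions $\mu\mapsto\Ptop(f,\varphi)-\int\varphi\,d\mu$). A subtlety worth flagging is that the competitors in \eqref{eq:concave-envelope} are required to dominate $h_\cdot(f)$ only on $\cP_f(X)$, whereas ${\mathfrak h}_\mu(f)$ is a priori defined on all of $\cP(X)$; but since we only assert the equality on $\cP_f(X)$, and since on $\cP_f(X)$ the two infima are over families that are mutually cofinal from above by the argument just sketched, this causes no difficulty. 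No genuinely hard analytic estimate is needed here; everything reduces to the Riesz representation of continuous affine functionals on $\cP(X)$ and a clean application of the classical variational principle.
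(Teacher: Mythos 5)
Your item (a) is correct and in fact more direct than the paper's route: the paper obtains (a) by sandwiching $\Ptop(f,\varphi)$ between suprema involving $h$, $h^*$ and ${\mathfrak h}$, whereas you bound $\sup_{\mu}\{h^*_\mu(f)+\int\varphi\,d\mu\}$ from above straight from the definition of $h^*$ (sequences $\mu_n\to\mu$ in $\cP_f(X)$) together with the classical variational principle, and get attainment from upper semi-continuity of $h^*$. The first two inequalities in (b) and the inequality $h^{**}\leqslant{\mathfrak h}$ on $\cP_f(X)$ are also fine: each map $\mu\mapsto\Ptop(f,\psi)-\int\psi\,d\mu$ is continuous, affine and dominates the metric entropy, hence is a competitor in \eqref{eq:concave-envelope}, which is even a bit cleaner than the paper's appeal to the minimality of the concave upper semi-continuous envelope.

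The gap is in the decisive inequality ${\mathfrak h}_\mu(f)\leqslant h^{**}_\mu(f)$. The competitors in \eqref{eq:concave-envelope} are continuous affine functions $T$ defined only on $\cP_f(X)$, and your cofinality argument rests on writing every such $T$ as $\mu\mapsto c-\int\psi\,d\mu$ with $\psi\in C(X)$. The Riesz/Dirac-measure identification you invoke does prove this representation for continuous affine functions on $\cP(X)$ (finitely supported measures are weak$^*$ dense there), but it does not apply on $\cP_f(X)$, which contains no Dirac measures at non-periodic points; in general, for a compact convex subset $K$ of $C(X)^*$, the functions $c+\int\psi\,d(\cdot)$ are only uniformly dense in the space of continuous affine functions on $K$, not all of it. So, as written, you only show that every \emph{representable} competitor dominates some $\Ptop(f,\psi)-\int\psi\,d(\cdot)$, which does not bound $h^{**}$ from below by ${\mathfrak h}$. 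The argument can be repaired by adding the density theorem and an $\varepsilon$-perturbation: if $\big|T(\mu)-\big(c-\int\psi\,d\mu\big)\big|\leqslant\varepsilon$ on $\cP_f(X)$, then $c+\varepsilon-\int\psi\,d\mu\geqslant h_\mu(f)$, so $c+\varepsilon\geqslant\Ptop(f,\psi)$ and hence $T\geqslant\Ptop(f,\psi)-\int\psi\,d(\cdot)-2\varepsilon\geqslant{\mathfrak h}-2\varepsilon$ by \eqref{eq:second-dual-2}. The paper avoids representability altogether and proves ${\mathfrak h}\leqslant h^{**}$ by a Hahn--Banach separation in $C(X)^*\times\mathbb R$ of a point $(\mu_0,b)$, with $b>h^{**}_{\mu_0}(f)$, from the convex set $\{(\mu,t):0\leqslant t\leqslant h^{**}_\mu(f)\}$, adapting Walters' Theorem 9.12 and using the variational principle \eqref{eq:igual-2} for $h^{**}$. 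Some separation or density argument of this kind is genuinely needed here; it cannot be replaced by the Riesz representation theorem alone.
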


The remainder of this subsection is devoted to the proof of this proposition. We start relating ${\mathfrak h}_\mu(f)$, $h_\mu(f)$ and $h^*_\mu(f)$ when $\mu$ belongs to $\mathcal{P}_f(X)$. As previously mentioned, for every $\mu \in \mathcal{P}_f(X)$ one has $h_\mu(f) \leqslant h^*_\mu(f)$. Moreover, by the classical variational principle, for each $\mu \in \mathcal{P}_f(X)$ we get
$$h_\mu(f) \,\,\leqslant \,\,\Ptop(f, \varphi) - \int \varphi \, d\mu  \quad \quad \forall \,\varphi \, \in \, C(X).$$
So
\begin{equation}\label{ineqhh-2}
{\mathfrak h}_\mu(f) = \inf_{\varphi \, \in \, C(X)}\, \left\{\Ptop(f, \varphi) - \int \varphi \, d\mu\right\} \,\,\geqslant \,\, h_\mu(f).
\end{equation}
If $h^*_\mu(f)  >  {\mathfrak h}_\mu(f)$ for some $\mu \in \mathcal{P}_f(X)$, then there exists $\nu \in \mathcal{P}_f(X)$ satisfying $h_\nu(f) > {\mathfrak h}_\nu(f)$, contradicting ~\eqref{ineqhh-2}. This proves that $h_\mu(f) \leqslant h^*_\mu(f) \leqslant  {\mathfrak h}_\mu(f)$ for every $\mu \in \mathcal{P}_f(X)$. These inequalities, together with Theorem~\ref{thm:main}, yield
\begin{align*}
\sup_{\mu \,\in \,\mathcal{P}_f(X)}\,\left\{h_\mu(f) + \int \varphi \, d\mu\right\}\,\,& \leqslant \,\,\max_{\mu \,\in \,\mathcal{P}_f(X)}\,\left\{h^*_\mu(f) + \int \varphi \, d\mu\right\} \\
& \leqslant \,\, \max_{\mu \,\in \,\mathcal{P}_f(X)}\,\left\{{\mathfrak h}_\mu(f) + \int \varphi \, d\mu\right\}.
\end{align*}
On the other hand, as $\mathcal{P}_f(X) \, \subset \, \mathcal{P}(X)$, one has for every $\varphi  \in  C(X)$
$$\max_{\mu \,\in \,\mathcal{P}_f(X)}\,\left\{{\mathfrak h}_\mu(f) + \int \varphi \, d\mu \right\} \,\,\leqslant \,\, \max_{\mu \,\in \,\mathcal{P}(X)}\,\left\{{\mathfrak h}_\mu(f) + \int \varphi \, d\mu\right\}.$$
Therefore, from both the classical and the new variational principle ~\eqref{eq:Pgamma} we deduce that
\begin{eqnarray*}
\max_{\mu \,\in \,\mathcal{P}_f(X)}\,\left\{{\mathfrak h}_\mu(f) + \int \varphi \, d\mu\right\} \, &\leqslant& \, \max_{\mu \,\in \,\mathcal{P}(X)}\,\left\{{\mathfrak h}_\mu(f) + \int \varphi \, d\mu\right\} \\
\,&=&\, \Ptop(f, \varphi) \,\,=\,\, \sup_{\mu \,\in \,\mathcal{P}_f(X)}\,\left\{h_\mu(f) + \int \varphi \, d\mu\right\}\\
&\leqslant&  \,\max_{\mu \,\in \,\mathcal{P}_f(X)}\,\left\{h^*_\mu(f) + \int \varphi \, d\mu\right\} \\
&\leqslant& \, \max_{\mu \,\in \,\mathcal{P}_f(X)}\,\left\{{\mathfrak h}_\mu(f) + \int \varphi \, d\mu\right\}.
\end{eqnarray*}
and, consequently,
\begin{equation}\label{eq:igual}
\Ptop(f, \varphi) \,=\, \max_{\mu \,\in \,\mathcal{P}_f(X)}\,\left\{{\mathfrak h}_\mu(f) + \int \varphi \, d\mu\right\} \,=\, \max_{\mu \,\in \,\mathcal{P}_f(X)}\,\left\{h^*_\mu(f) + \int \varphi \, d\mu\right\}.
\end{equation}
Notice that the supremum is attained since ${\mathfrak h}_f$ is upper semi-continuous on $\mathcal{P}_f(X)$, so the map $\mu \in \mathcal{P}_f(X) \,\mapsto\, {\mathfrak h}_\mu(f) + \int \varphi \, d\mu$ is upper semi-continuous as well; similar comment regarding $h^*$. This proves item (a) of the proposition.

\begin{remark}
The previous argument also works if we replace $h^*$ by $h^{**}$. Thus
\begin{equation}\label{eq:igual-2}
\Ptop(f, \varphi) \,=\, \max_{\mu \,\in \,\mathcal{P}_f(X)}\,\left\{h^{**}_\mu(f) + \int \varphi \, d\mu\right\}.
\end{equation}
\end{remark}

\medskip

We proceed by showing that $h^{**} \,= \,\mathfrak h$ in $\mathcal{P}_f(X)$. According to \cite[A.2.1]{D11}, the map $h^{**}$ is smaller than any other upper semi-continuous concave map that also upper bounds the metric entropy. Therefore, one has $h^{**} \,\leqslant \,\mathfrak h$, since $\mathfrak h$ is upper semi-continuous, affine (hence concave) and satisfies \eqref{ineqhh-2}. So, regarding item (b) of the proposition, we are left to show that $h^{**} \,\geqslant \,\mathfrak h$ in $\mathcal{P}_f(X)$.

To prove this inequality, we just need to adapt the argument in \cite[Theorem 9.12]{Wa} and this way conclude that, for every $\mu \,\in \,\mathcal{P}_f(X)$,
$$h^{**}_\mu(f) \,\,\geqslant\,\, \inf_{\varphi \, \in \, C(X)}\, \Big\{\Ptop(f, \varphi) - \int \varphi \, d\mu\Big\}$$
which, due to \eqref{eq:second-dual-2}, yields $h^{**}_\mu(f) \,\,\geqslant\,\, {\mathfrak h}_\mu(f).$

Take $\mu_0 \in \mathcal{P}_f(X)$ and, as $\htop(f) < +\infty$, consider a real number $b > h^{**}_{\mu_0}(f)$. Let $C$ be the set
$$C \,=\, \Big\{(\mu, t) \in \mathcal{P}_f(X) \times \mathbb{R}\colon \,\, 0 \,\leqslant\, t\, \leqslant \,h^{**}_{\mu}(f)\Big\}.$$
Since $h^{**}$ is concave, $C$ is a convex set: if $(\mu, t)$ and $(\nu, s)$ belong to $C$ and $0 \leqslant p \leqslant 1$, then $p\mu + (1-p)\nu \in \mathcal{P}_f(X)$ and
$$0\,\leqslant \, pt + (1-p)s \,\leqslant \, ph^{**}_\mu(f) + (1-p) h^{**}_\nu(f) \,\leqslant \, h^{**}_{p\mu + (1-p)\nu}(f).$$
Moreover, if we consider $C$ as a subset of the dual $C(X)^*$ with the weak$^*$ topology, then $(\mu_0, b)$ does not belong to the closure $\overline{C}$ of $C$. Indeed, if there was a sequence $\big((\mu_n, t_n)\big)_n \in C$ converging to $(\mu_0, b)$, then, since $h^{**}$ is upper semi-continuous, one would get
$$b\,=\, \lim_{n \,\to\,+\infty}\,t_n \,\leqslant \, \limsup_{n \,\to\,+\infty}\,h^{**}_{\mu_n}(f) \,\leqslant\, h^{**}_{\mu_0}(f)$$
contradicting the choice of $b$. Therefore, there is a continuous functional $F: \,C(X)^* \times \mathbb{R} \to \mathbb{R}$ separating the disjoint convex sets $\overline{C}$ and $\{(\mu_0, b)\}$, that is, such that
\begin{equation}\label{eq:F}
F((\mu, t)) \, < \, F((\mu_0, b)) \quad \quad \forall \,(\mu, t) \in \overline{C}.
\end{equation}
Since we are using the weak$^*$ topology on $C(X)^*$, there are $\psi \in C(X)$ and $\gamma \in \mathbb{R}$ such that $F((\mu, t)) \,=\, \int \psi \, d\mu + t\gamma$ for every  $(\mu, t) \in C(X)^* \times \mathbb{R}$. Thus \eqref{eq:F} becomes
$$\int \psi \, d\mu + t\gamma \,\,< \,\, \int \psi \, d\mu_0 + b\gamma \quad \quad \forall\, (\mu, t) \in \overline{C}.$$
In particular,
$$\int \psi \, d\mu + h^{**}_\mu(f)\gamma \,\,< \,\, \int \psi \, d\mu_0 + b\gamma \quad \quad \forall\, \mu \in \mathcal{P}_f(X)$$
so, when $\mu = \mu_0$, one obtains
$$\int \psi \, d\mu_0 + h^{**}_{\mu_0}(f)\gamma \,\,< \,\, \int \psi \, d\mu_0 + b\gamma.$$
Therefore $\gamma > 0$, and so
$$\int \frac{\psi}{\gamma} \, d\mu + h^{**}_{\mu}(f)  \,\,< \,\, \int \frac{\psi}{\gamma} \, d\mu_0 + b \quad \quad \forall\, \mu \in \mathcal{P}_f(X).$$
Consequently, by the variational principle \eqref{eq:igual-2},
$$\Ptop\big(f, \frac{\psi}{\gamma}\big) \,\leqslant \, \int \frac{\psi}{\gamma} \, d\mu_0 + b.$$
Hence,
$$b \, \geqslant \, \Ptop\big(f, \frac{\psi}{\gamma}\big) -  \int \frac{\psi}{\gamma} \, d\mu_0  \,\geqslant \, \inf_{\varphi \, \in \, C(X)}\, \Big\{\Ptop(f, \varphi) - \int \varphi \, d\mu_0\Big\}$$
which implies that
$$h^{**}_{\mu_0}(f) \, \geqslant \, \inf_{\varphi \, \in \, C(X)}\, \Big\{\Ptop(f, \varphi) - \int \varphi \, d\mu_0\Big\}.$$
The proof of the proposition is complete.

\begin{remark}
If $\mu_0 \in \mathcal{P}_f(X)$ is an equilibrium state for $f$ and $\varphi$ with respect to the classical variational principle, that is,
$$\Ptop(f,\varphi) = h_f(\mu_0) + \int \varphi \, d\mu_0$$
then $\mu_0$ attains the maximum at \eqref{eq:Pgamma} as well. Indeed,
$$\Ptop(f,\varphi) \geqslant {\mathfrak h}_{\mu_0}(f) + \int \varphi \, d\mu_0 \geqslant h_f(\mu_0) + \int \varphi \, d\mu_0 = \Ptop(f,\varphi).$$
The converse is false, though, as the example in Subsection~\ref{sec:non-standard} illustrates.
\end{remark}

\begin{remark}\label{rmk:otherbound}
When the metric entropy map is not upper semi-continuous, the map ${\mathfrak h}_f = h^{**}$ is a strict upper bound for the metric entropy. Thus, Theorem~\ref{thm:second-main} motivates the search for an optimal upper semi-continuous upper bound. Clearly, Theorem~\ref{thm:main} applied to the pressure function $P_{\text{top}}(f, .) \, \colon \,\,B_m(X) \to  \mathbb{R}\cup\{+\infty\}$ provides in general a better bound than ${\mathfrak h}_f$, since it guarantees that
\begin{equation*}
P_{\text{top}}(f, \varphi) = \max_{\mu \, \in \, \mathcal{P}_a(X)}\, \left\{ \mathfrak h_\infty(\mu) + \int \varphi \, d\mu \right\} \quad \quad \forall\, \varphi \in {B_m(X)}
\end{equation*}
where the upper semi-continuous map $\mathfrak h_{\infty}$ is defined by
$$\mathfrak h_{\infty}(\mu) = \inf_{\varphi \, \in \, {B_m(X)}}\, \left\{P_{\text{top}}(f, \varphi) - \int\varphi \, d\mu\right\} \quad \quad \forall \,\mu \in  \mathcal{P}_a(X)$$
and so $h_\mu(f) \leqslant \mathfrak h_{\infty}(\mu) \leqslant {\mathfrak h}_\mu(f)$ for every $\mu \in \cP(X).$
\end{remark}

\subsection{An example without a measure with maximal entropy}\label{sec:non-standard}

Given $\varphi \in C(X)$, denote by $\mathcal{P}_\varphi(f,X) \subset \mathcal{P}_f(X)$ the space of (classical) equilibrium states for $f$ and $\varphi$. Both $\mathcal{P}_\varphi(f,X)$ and $\mathcal{T}_\varphi(P_{\mathrm{top}})$ are convex sets, but whereas $\mathcal{T}_\varphi(P_{\mathrm{top}})$ is always non-empty and compact for the weak$^*$ topology, this is sometimes not true for $\mathcal{P}_\varphi(f,X)$, as we will now check. In general, one has $\mathcal{P}_\varphi(f,X) \subset \mathcal{T}_\varphi(P_{\mathrm{top}})$, with equality if and only if the metric entropy map is upper semi-continuous at every element of $\mathcal{T}_\varphi(P_{\mathrm{top}})$ (cf. \cite{Wa}). 
As stated in Theorem~\ref{thm:tangent-functionals}, it is the set $\mathcal{E}_\varphi(\Gamma)$
of the $f$-invariant probability measures which maximize the map $\Gamma (\varphi) = \max_{\mu \, \in \, \mathcal{P}_f(X)}\, \left\{ {\mathfrak h}_\mu(f) + \int \varphi \, d\mu \right\}$ that fills in the gap between $\mathcal{P}_\varphi(f,X)$ and $\mathcal{T}_\varphi(P_{\mathrm{top}})$.

Let us briefly recall the example given on \cite[p. 193]{Wa} of a homeomorphism without a measure with maximal entropy. We start describing the $\beta$-shift. Let $\beta > 1$ be given and take the expansion of $1$ in powers of $\beta^{-1}$, that is, $1 = \sum_{n\,=\,1}^{+\infty} \, a_n \,\beta^{-n}$
where
$$a_1  =  [\beta] \quad \quad \text{and} \quad \quad a_n  =  \big[\beta^n - \sum_{j\,=\,1}^{n-1} \, a_j \,\beta^{n-j}\big] \quad \quad \forall\, n \geqslant 2.$$
Then $0 \leqslant a_n \leqslant k-1$ for all $n \in \mathbb{N}$, where $k = [\beta] + 1.$ So we can consider $a=(a_n)_{n \, \in \, \mathbb{N}}$ as a point in the space $\Sigma^+_k = \prod_{i\,=\,1}^{+\infty}\,\{0, 1, \cdots, k-1\}$, within which we define the lexicographical ordering, that is, $(x_n)_{n \, \in \, \mathbb{N}} \, < \,(y_n)_{n \, \in \, \mathbb{N}}$ if $x_j < y_j$ for the smallest $j$ with $x_j \neq y_j$. Let $\sigma_+: \,\Sigma^+_k  \to \Sigma^+_k$ be the one-sided shift transformation. Note that $\sigma_+^n(a) \leqslant a$ for every $n \in \mathbb{N}_0$. Define
$$Y_\beta = \Big\{x = (x_n)_{n \, \in \, \mathbb{N}} \,\in \, \Sigma^+_k \, \colon \quad \,\sigma_+^n(x) \leqslant a \quad \forall \,n \in \mathbb{N}_0 \Big\}.$$
This is a non-empty closed subset of $\Sigma^+_k$, and one has $\sigma_+(Y_\beta) = Y_\beta$ and $\htop({\sigma_+}_{|Y_\beta}) = \log \beta$. Besides, if $\Sigma_k = \prod_{i\,=\,-\infty}^{+\infty}\,\{0, 1, \cdots, k-1\}$ and
$$X_\beta = \Big\{x=(x_n)_{n \, \in \, \mathbb{Z}} \,\in \, \Sigma_k \, \colon \quad\, (x_i, \, x_{i+1}, \cdots) \, \in \, Y_\beta \quad  \forall \,i \in \mathbb{Z}\Big\}$$
then $X_\beta$ is closed in $\Sigma_k$, invariant under the two-sided shift $\sigma$ and $\htop(\sigma_{|X_\beta}) = \log \beta$ as well.

Now choose an increasing sequence $(\beta_n)_{n \, \in \, \mathbb{N}}$ such that $1 < \beta_n < 2$ and $\lim_{n \, \to \, +\infty}\, \beta_n = 2$. Let $f_n:\, X_{\beta_n} \to X_{\beta_n}$ denote the two-sided $\beta_n$-shift and consider on $\Sigma_k$ a metric $d_n$ inducing the product topology and satisfying $d_n(x,y) \leqslant 1$ for every $x, \, y \in \Sigma_k$. Define a new space $X$ as the disjoint union of all the spaces $X_{\beta_n}$ together with a compactification point $x_\infty$, and put on $X$ the metric
$$
\rho(x,y) = \left\{\begin{array}{ll}
\frac{1}{n^2}\,d_n(x,y), & \hbox{if $x, \, y \in X_{\beta_n}$} \\
\smallskip \\
\sum_{j\, = \,n}^{p}\, \frac{1}{j^2}, & \hbox{if $x \in X_{\beta_n}$, $y \in X_{\beta_p}$ and $n < p$} \\
\smallskip \\
\sum_{j\, = \,n}^{+\infty}\, \frac{1}{j^2}, & \hbox{if $x=x_\infty$ and $y \in X_{\beta_n}$.} \\
\end{array}
\right.
$$
\smallskip

\noindent Then $(X, \,\rho)$ is a compact metric space and the sequence of subsets $\big(X_{\beta_n}\big)_{n \, \in \, \mathbb{N}}$ converges to $x_\infty$, that is, the sequence
$$n \in \mathbb{N} \quad \mapsto \quad \tau_n = \inf \big\{\rho(z,x_\infty) \, \colon\, \,z \in X_{\beta_n}\big\}$$
converges to $0$. Moreover, the map $f: X \to X$ defined as $f_{|X_{\beta_n}} = f_n$ and $f(x_\infty)=x_\infty$ is a homeomorphism of $(X,\rho)$; and the Borel $f$-invariant probability measures are given by
$$\sum_{n \, = \, 1}^{+\infty}\, p_n \,\mu_n + \Big(1-\sum_{n \, = \, 1}^{+\infty}\, p_n\Big)\,\delta_{x_\infty}$$
where $\mu_n \in \mathcal{P}_{f_n}(X_{\beta_n})$ for every $n \in \mathbb{N}$, and the numbers $p_n$ are non-negative and satisfy $\sum_{n \, = \, 1}^{+\infty}\, p_n \leqslant 1$. Hence the ergodic elements of $\mathcal{P}_f(X)$ are either ergodic measures in $\mathcal{P}_{f_n}(X_{\beta_n})$ for some $n$ or $\delta_{x_\infty}$. Therefore, if $\mathcal{E}_f(X)$ stands for the subset of ergodic measures in $\mathcal{P}_f(X)$, then
\begin{eqnarray*}
\htop(f) &=& \sup\,\big\{h_\mu(f): \,\mu \in \mathcal{E}_f(X)\big\} \,=\, \sup_{n \, \in \,\mathbb{N}}\,\sup \, \big\{h_{\mu_n}(f_n): \,\mu_n \in {\mathcal{E}}_{f_n}(X_{\beta_n})\big\} \\
&=& \sup_{n \, \in \,\mathbb{N}}\,\htop(f_n) \,=\, \lim_{n \, \to \, +\infty}\, \log \beta_n \,=\, \log 2.
\end{eqnarray*}
Now, if $f$ had a maximal entropy measure, then there should exist an ergodic maximal entropy measure $\mu$. Thus $\mu$ would belong to $\mathcal{E}_{f_n}(X_{\beta_n})$ for some $n$, and so $h_\mu(f) = \log \beta_n < \log 2.$

\smallskip

Let us look instead for a maximizing probability measure of $h^*$.

\begin{lemma}\label{le:delta-infinito}
Let $\varepsilon >0$ be given and, for each $n \in \mathbb{N}$, consider $\mu_n \in  {\mathcal{P}}_{f_{n}}(X_{\beta_n})$ such that $\htop(f_n) = h^*_{\mu}(f_n)$. Then any accumulation point of $(\mu_n)_{n \, \in \, \mathbb{N}}$ in the weak$^*$ topology is $\delta_{x_\infty}$.
\end{lemma}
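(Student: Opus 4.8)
The plan is to prove the stronger statement that the whole sequence $(\mu_n)_n$ converges to $\delta_{x_\infty}$ in the weak$^*$ topology; the claim about accumulation points then follows at once. In fact the hypothesis $\htop(f_n)=h^*_{\mu_n}(f_n)$ and the auxiliary $\varepsilon$ will not be used — what matters is only that each $\mu_n$ is a probability measure carried by the subset $X_{\beta_n}\subset X$, together with the purely metric fact that in $(X,\rho)$ the sets $X_{\beta_n}$ collapse onto the point $x_\infty$.

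First I would read off from the definition of $\rho$ that $\rho(y,x_\infty)=\sum_{j=n}^{+\infty} j^{-2}=:r_n$ for \emph{every} $y\in X_{\beta_n}$, so that $X_{\beta_n}\subset B_\rho(x_\infty,r_n)$ with $r_n\to 0$ as the tail of a convergent series (one also has $\diam_\rho(X_{\beta_n})\leqslant n^{-2}\to 0$ since $d_n\leqslant 1$, though only the distance estimate is needed). Then, fixing $g\in C(X)$ and $\eta>0$, the continuity of $g$ at $x_\infty$ provides $\delta>0$ with $|g(z)-g(x_\infty)|<\eta$ whenever $\rho(z,x_\infty)<\delta$; choosing $N$ with $r_n<\delta$ for all $n\geqslant N$, every $\mu_n$ with $n\geqslant N$ is supported inside $B_\rho(x_\infty,\delta)$, whence $\big|\int g\,d\mu_n-g(x_\infty)\big|\leqslant\int_{X_{\beta_n}}|g-g(x_\infty)|\,d\mu_n<\eta$. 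Thus $\int g\,d\mu_n\to g(x_\infty)=\int g\,d\delta_{x_\infty}$ for every $g\in C(X)$, i.e.\ $\mu_n\to\delta_{x_\infty}$ weak$^*$; a fortiori $\delta_{x_\infty}$ is the unique weak$^*$ accumulation point of $(\mu_n)_n$.

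I do not expect any real obstacle: this is just the standard fact that a net of probability measures whose supports shrink to a point converges to the Dirac mass there. The only mild point of care is that the bound $r_n$ on $\rho(\,\cdot\,,x_\infty)$ is uniform over all of $X_{\beta_n}$ — indeed $\rho(\,\cdot\,,x_\infty)$ is constant on $X_{\beta_n}$ — which is precisely what makes the integral estimate above go through immediately.
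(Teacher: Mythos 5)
Your proof is correct and follows essentially the same route as the paper: both arguments use only that each $\mu_n$ is a probability measure carried by $X_{\beta_n}$ and that these sets converge to $x_\infty$ in $\rho$, so that $\psi|_{X_{\beta_n}}$ is uniformly close to $\psi(x_\infty)$ for large $n$, forcing $\int\psi\,d\mu_n\to\psi(x_\infty)$ for every $\psi\in C(X)$. Your observation that the entropy hypothesis is not actually used is consistent with the paper's proof, which likewise never invokes it.
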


\begin{proof} Take $\psi \in C(X)$. Our aim is to show that $\lim_{n \,\to \,+ \infty}\,\int \psi \, d\mu_n = \psi(x_\infty)$. As $\psi$ is continuous on the compact $X$, the subsets $\big(X_{\beta_n}\big)_{n \, \in \, \mathbb{N}}$ are pairwise disjoint and converge to $x_\infty$ with respect to the metric $\rho$, then the sequence of continuous maps $\big(\psi_n = \psi_{|X_{\beta_n}}\big)_{n \,\in \,\mathbb{N}}$ converges uniformly to $\psi(x_\infty)$. Therefore,
\begin{eqnarray*}
\left|\int \psi \, d\mu_n - \psi(x_\infty)\right|
\,=\, \left|\int \big(\psi_n  - \psi(x_\infty)\big) \, d\mu_n\right| \,\leqslant\, \|\psi_n  - \psi(x_\infty)\|_\infty \quad \,\stackrel{n\, \to \, +\infty}{\longrightarrow} \,\,0.
\end{eqnarray*}
\end{proof}

From Lemma~\ref{le:delta-infinito} and the upper semi-continuity of $h^*$ we also conclude that
$$h^*_{\delta_{x_\infty}}(f) \,\geqslant\, \limsup_{n \,\to \, +\infty} \, h^*_{\mu_n}(f) \,=\, \limsup_{n \,\to \, +\infty} \, h^*_{\mu_n}(f_n) \,=\, \log 2.$$
Since by definition $h^*_{\delta_{x_\infty}}(f) \leqslant \mathfrak{h}_{\delta_{x_\infty}}(f) \leqslant \htop(f) = \log 2$, the measure
$\delta_{x_\infty}$ maximizes both $h^*$ and $\mathfrak{h}$. On the contrary, $h_{\delta_{x_\infty}}(f) = 0.$
In particular, as $h_{\delta_{x_\infty}}(f)  \, < \, \mathfrak{h}_{\delta_{x_\infty}}(f)$, the measure $\delta_{x_\infty}$ belongs to $\mathscr D$ (cf. \eqref{def:D}).

\subsection{Pressure derived from Ruelle-Perron-Frobenius transfer operators}\label{sec:RPF-bounded}

Some of the statistical properties of equilibrium states are often proved using transfer operators, and the topological pressure arises as the logarithm of the spectral radius of such an operator (cf. \cite{Ke}). However, the spectral theory of these operators is more powerful when the transfer operator preserves the spaces of H\"older continuous or bounded variation potentials. In what follows, we recall some of these concepts and show that Theorem~\ref{thm:main} also imparts a new insight in the thermodynamic formalism of piecewise continuous maps.

Let $f: X\to X$ be a piecewise continuous map on a metric space $(X,d)$ and assume that $\kappa \,:= \,\sup_{x \, \in \, X}\,\#f^{-1}(x) < +\infty$. Then, given a potential $\varphi \in B_m(X)$, the \emph{Ruelle-Perron-Frobenius transfer operator with weight $\varphi$} is (well) defined by
\begin{eqnarray}\label{def:transfer}
\mathcal{L}_\varphi \colon \quad B_m(X) \quad &\to& \quad B_m(X) \nonumber \\
\psi \quad &\mapsto& \quad \mathcal{L}_\varphi (\psi) \colon \,\, x \in X \,\mapsto \,\sum_{f(y)\,=\,x} e^{\varphi (y)} \, \psi(y).
\end{eqnarray}
Denote by $r(\mathcal{L}_\varphi)$ the spectral radius of $\mathcal{L}_\varphi$ which, according to Gelfand's formula (cf. \cite{DS-2}),
may be computed by $r(\mathcal{L}_\varphi) = \lim_{n\,\to\, +\infty} \,\sqrt[n]{\|\mathcal{L}_\varphi^n\|}$.

\begin{lemma}\label{le:piecewisespectral}
The function $P: B_m(X) \to \mathbb R$ given by $P(\varphi)=\log r(\mathcal{L}_\varphi)$ is a pressure function.
\end{lemma}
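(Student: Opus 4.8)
The plan is to reduce the three axioms to properties of the iterated operator evaluated at the constant function $\mathbf 1\equiv 1$. First I would record that iterating \eqref{def:transfer} gives, for every $n\in\mathbb N$ and $\psi\in B_m(X)$,
\[
\mathcal L_\varphi^{\,n}(\psi)(x)\;=\;\sum_{f^n(y)\,=\,x} e^{\,S_n\varphi(y)}\,\psi(y),\qquad S_n\varphi(y):=\sum_{j=0}^{n-1}\varphi(f^j(y)).
\]
Since $\mathcal L_\varphi$ is a positive operator on $(B_m(X),\|\cdot\|_\infty)$, one has $|\mathcal L_\varphi^{\,n}\psi(x)|\leqslant \|\psi\|_\infty\,\mathcal L_\varphi^{\,n}\mathbf 1(x)$ for every $\psi$, so
\[
\|\mathcal L_\varphi^{\,n}\|\;=\;\|\mathcal L_\varphi^{\,n}\mathbf 1\|_\infty\;=:\;Z_n(\varphi)\;=\;\sup_{x\,\in\, X}\ \sum_{f^n(y)\,=\,x} e^{\,S_n\varphi(y)}.
\]
The bounds $e^{-n\|\varphi\|_\infty}\leqslant Z_n(\varphi)\leqslant \kappa^{\,n}e^{\,n\|\varphi\|_\infty}$ (using $\#f^{-1}(x)\leqslant\kappa$ and $f^n(X)\neq\emptyset$) show $Z_n(\varphi)\in(0,+\infty)$, while the cocycle identity $S_{n+m}\varphi(y)=S_m\varphi(y)+S_n\varphi(f^m(y))$ yields the submultiplicativity $Z_{n+m}(\varphi)\leqslant Z_n(\varphi)\,Z_m(\varphi)$. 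Hence, by Fekete's subadditivity lemma together with Gelfand's formula,
\[
P(\varphi)\;=\;\log r(\mathcal L_\varphi)\;=\;\lim_{n\to+\infty}\tfrac1n\log Z_n(\varphi)\;=\;\inf_{n\in\mathbb N}\tfrac1n\log Z_n(\varphi)
\]
is a well-defined finite real number for every $\varphi\in B_m(X)$.

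Each of the conditions (C$_1$), (C$_2$), (C$_3$) is then obtained by establishing the corresponding property for every $Z_n$, taking $\tfrac1n\log(\cdot)$ and letting $n\to+\infty$. For (C$_1$): if $\varphi\leqslant\psi$ pointwise then $S_n\varphi\leqslant S_n\psi$, so $e^{\,S_n\varphi(y)}\leqslant e^{\,S_n\psi(y)}$ for every $y$, whence $Z_n(\varphi)\leqslant Z_n(\psi)$. For (C$_2$): $S_n(\varphi+c)=S_n\varphi+nc$, so $Z_n(\varphi+c)=e^{\,nc}Z_n(\varphi)$ and thus $\tfrac1n\log Z_n(\varphi+c)=c+\tfrac1n\log Z_n(\varphi)$. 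For (C$_3$), fix $t\in(0,1)$ (the cases $t\in\{0,1\}$ being trivial) and $x\in X$; writing $a_y=e^{\,S_n\varphi(y)}$, $b_y=e^{\,S_n\psi(y)}$ and using $S_n(t\varphi+(1-t)\psi)=t\,S_n\varphi+(1-t)\,S_n\psi$, the discrete Hölder inequality with conjugate exponents $1/t$ and $1/(1-t)$, applied over the finite fiber $f^{-n}(x)$, gives
\[
\sum_{f^n(y)\,=\,x} a_y^{\,t}\,b_y^{\,1-t}\;\leqslant\;\Big(\sum_{f^n(y)=x} a_y\Big)^{\!t}\Big(\sum_{f^n(y)=x} b_y\Big)^{\!1-t}\;\leqslant\;Z_n(\varphi)^{\,t}\,Z_n(\psi)^{\,1-t}.
\]
Taking the supremum over $x\in X$ yields $Z_n(t\varphi+(1-t)\psi)\leqslant Z_n(\varphi)^{\,t}Z_n(\psi)^{\,1-t}$, so $\tfrac1n\log Z_n(t\varphi+(1-t)\psi)\leqslant t\,\tfrac1n\log Z_n(\varphi)+(1-t)\,\tfrac1n\log Z_n(\psi)$; letting $n\to+\infty$ gives convexity of $P$.

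There is no genuine obstacle here. The only points requiring some care are the reduction $\|\mathcal L_\varphi^{\,n}\|=\|\mathcal L_\varphi^{\,n}\mathbf 1\|_\infty$, which rests on the positivity of $\mathcal L_\varphi$, and the finiteness of the limit defining $P$; the convexity, which is the least routine of the three, is nothing more than a fiberwise application of Hölder's inequality. One also uses tacitly that $\mathcal L_\varphi$ does map $B_m(X)$ into itself, which is subsumed in the assertion that $\mathcal L_\varphi$ is well defined.
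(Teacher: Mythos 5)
Your proposal is correct and follows essentially the same route as the paper: reduce the operator norm to $\|\mathcal L_\varphi^{\,n}\mathbf 1\|_\infty$ via positivity, get monotonicity and translation invariance from the corresponding properties of the Birkhoff sums, and prove convexity by a fiberwise H\"older inequality before taking $\tfrac1n\log$ and passing to the limit. The extra submultiplicativity/Fekete observation is harmless but not needed, since Gelfand's formula already provides the limit.
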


\begin{proof}
Fix $\varphi \in B_m(X)$. Since the space $B_m(X)$ is endowed with the supremum norm and $\mathcal{L}_\varphi$ is a positive operator, for every $n \in \mathbb{N}$ one has
$$\|\mathcal{L}_\varphi^n\| = \sup_{\|\psi\|_\infty \, =\,1}\, \|\mathcal{L}^n_\varphi(\psi)\|_\infty = \|\mathcal{L}^n_\varphi(\mathbf 1)\|_\infty.$$
So $r(\mathcal{L}_\varphi)=\lim_{n\,\to\, +\infty} \,\sqrt[n]{\|\mathcal{L}_\varphi^n(\mathbf 1)\|_\infty}$, which is bounded by $\kappa \, e^{\|\varphi\|_\infty}$.
Now, given $a \in [0,1]$, $\varphi_1, \,\varphi_2\in B_m(X)$ and $n \in \mathbb{N}$, we write
\begin{align*}
\mathcal{L}^n_{a\varphi_1 + (1-a)\varphi_2}(\mathbf 1)(x) = \sum_{f^n(y)=x} e^{S_n(a\varphi_1 + (1-a)\varphi_2) (y)}
	= \!\!\sum_{f^n(y)=x}  \big(e^{S_n\varphi_1(y)}\big)^a \big(e^{S_n\varphi_2(y)}\big)^{1-a}
\end{align*}
and apply H\"older's inequality to get
$$\|\mathcal{L}^n_{a\varphi_1 + (1-a)\varphi_2}(\mathbf 1)\|_\infty \leqslant \|\mathcal{L}^n_{\varphi_1}(\mathbf 1)\|_\infty^a  \|\mathcal{L}^n_{\varphi_2}(\mathbf 1)\|_\infty^{1-a}.$$
Taking logarithm, dividing by $n$ and letting $n$ go to $+\infty$, we obtain
$$\log \big(r(\mathcal{L}_{a\,\varphi_1 + (1-a)\,\varphi_2})\big) \,\leqslant \,a\, \log \,\big(r(\mathcal{L}_{\varphi_1})\big) + (1-a)\, \log \,\big(r(\mathcal{L}_{\varphi_2})\big)$$
thereby showing the convexity of the function $P$. The monotonicity follows from the positivity of the operator $\mathcal{L}_\varphi$ and the proof of the translation invariance is immediate.
\end{proof}

Consequently, Theorem~\ref{thm:main} yields the following variational principle.

\begin{corollary}\label{corPE}
Let $f: X \to X$ be a piecewise continuous map on a metric space $X$ such that $\kappa \,:= \,\sup_{x \, \in \, X}\,\#f^{-1}(x) < +\infty$. Given $\varphi \in B=B_m(X)$, there exists an upper semi-continuous map $\mathfrak h_B\colon \cP_a(X) \to\mathbb R$ such that
$$
\log r(\mathcal{L}_\varphi) \, = \,\max_{\mu\,\in\, \cP_a(X)} \,\Big\{\mathfrak h_B(\mu) + \int \varphi \, d\mu\Big\}
$$
In particular, there is $\mu_\varphi \in \cP_a(X)$ satisfying Rohlin's-like formula (cf. \cite{Rohlin})
$$\mathfrak h_B(\mu_\varphi) = \int \,\log \,\big(r(\mathcal{L}_\varphi)\, e^{-\varphi}\big) \, d\mu_\varphi.$$
\end{corollary}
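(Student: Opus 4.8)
The plan is to obtain the corollary directly from Theorem~\ref{thm:main} applied to the pressure function supplied by Lemma~\ref{le:piecewisespectral}. First I would set $\Gamma := P : B_m(X) \to \mathbb{R}$, $\Gamma(\varphi) = \log r(\mathcal{L}_\varphi)$; by Lemma~\ref{le:piecewisespectral} this $\Gamma$ is monotone, translation invariant and convex, i.e. a pressure function in the sense of Definition~\ref{def:pressure-function}. Then Theorem~\ref{thm:main}, with $\mathbf{B} = B_m(X)$, produces an upper semi-continuous (in fact affine) map $\mathfrak{h}_B := \mathfrak{h}_{\Gamma, B_m(X)} : \cP_a(X) \to \mathbb{R}$, given by $\mathfrak{h}_B(\mu) = \inf\{\int \psi\, d\mu : \psi \in B_m(X),\ \log r(\mathcal{L}_{-\psi}) \leqslant 0\}$, for which
$$\log r(\mathcal{L}_\varphi) \,=\, \max_{\mu \in \cP_a(X)} \left\{ \mathfrak{h}_B(\mu) + \int \varphi\, d\mu \right\} \qquad \forall\, \varphi \in B_m(X).$$
This is the displayed variational principle in the statement.

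For the Rohlin-type identity, the key observation is that the supremum above is attained, so the set $\cE_\varphi(\Gamma)$ of finitely additive equilibrium states of $\varphi$ is non-empty. Fixing $\mu_\varphi \in \cE_\varphi(\Gamma)$ one has $\log r(\mathcal{L}_\varphi) = \mathfrak{h}_B(\mu_\varphi) + \int \varphi\, d\mu_\varphi$, hence
$$\mathfrak{h}_B(\mu_\varphi) \,=\, \log r(\mathcal{L}_\varphi) - \int \varphi\, d\mu_\varphi \,=\, \int \big( \log r(\mathcal{L}_\varphi) - \varphi(x) \big)\, d\mu_\varphi(x) \,=\, \int \log \big( r(\mathcal{L}_\varphi)\, e^{-\varphi} \big)\, d\mu_\varphi,$$
where I use that $\mu_\varphi$ is normalized, that $\log r(\mathcal{L}_\varphi)$ is a real constant, and the pointwise identity $\log r(\mathcal{L}_\varphi) - \varphi = \log(r(\mathcal{L}_\varphi)\, e^{-\varphi})$.

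Since Lemma~\ref{le:piecewisespectral} and Theorem~\ref{thm:main} already do all the work, there is essentially no obstacle; the one point not yet on record is the strict positivity of $r(\mathcal{L}_\varphi)$, which is needed for $\log(r(\mathcal{L}_\varphi)\, e^{-\varphi})$ to be meaningful. This follows at once: for any $x$ in the non-empty set $f^n(X)$ the value $\mathcal{L}_\varphi^n(\mathbf{1})(x) = \sum_{f^n(y) = x} e^{S_n\varphi(y)}$ has at least one summand, each at least $e^{-n\|\varphi\|_\infty}$, so $\|\mathcal{L}_\varphi^n(\mathbf{1})\|_\infty \geqslant e^{-n\|\varphi\|_\infty}$ and therefore $r(\mathcal{L}_\varphi) = \lim_n \|\mathcal{L}_\varphi^n(\mathbf{1})\|_\infty^{1/n} \geqslant e^{-\|\varphi\|_\infty} > 0$; the finiteness of $r(\mathcal{L}_\varphi)$, and hence the fact that $\Gamma$ is genuinely $\mathbb{R}$-valued, is the bound $r(\mathcal{L}_\varphi) \leqslant \kappa\, e^{\|\varphi\|_\infty}$ established inside the proof of Lemma~\ref{le:piecewisespectral}.
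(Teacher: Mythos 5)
Your proposal is correct and follows exactly the route the paper intends: Lemma~\ref{le:piecewisespectral} shows $\varphi \mapsto \log r(\mathcal{L}_\varphi)$ is a pressure function on $B_m(X)$, Theorem~\ref{thm:main} then gives the upper semi-continuous $\mathfrak h_B$ and the attained maximum, and the Rohlin-type identity is just the rewriting $\log r(\mathcal{L}_\varphi)-\int\varphi\,d\mu_\varphi=\int\log\big(r(\mathcal{L}_\varphi)e^{-\varphi}\big)\,d\mu_\varphi$ using normalization of $\mu_\varphi$. Your added check that $r(\mathcal{L}_\varphi)\geqslant e^{-\|\varphi\|_\infty}>0$ is a sensible small supplement the paper leaves implicit, but otherwise the argument coincides with the paper's.
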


We illustrate this result with the following class of examples.

\begin{example}
Consider a $C^1$-piecewise expanding map $f:X \to X$ whose domain is the union of a finite number of subintervals $X_i = [a_i,b_i)$ or $X_i = (a_i,b_i]$, where $a_i < b_i$, within which $f$ is continuous. Let $\varphi = -\log |f'|$, which we assume to be piecewise continuous and bounded, though it may not exhibit any further regularity. The corresponding transfer operator is given by
$$\psi \in B_m(X) \quad \mapsto \quad \mathcal{L}_\varphi (\psi)(x) = \sum_{f(y)\,=\, x} \frac1{|f^\prime(y)|} \, \psi(y)$$
and Corollary~\ref{corPE} ensures that there exists $\mu_\varphi \in \cP_a(X)$ such that
$$\mathfrak h_B(\mu_\varphi) + \int \log |f'| \, d\mu_\varphi = \log r(\mathcal{L}_\varphi).$$
For instance, the Lorenz maps satisfy the previous assumptions with $X =[-1,0) \cup (0, 1]$.
\end{example}

We observe that, while dealing with a transfer operator acting on a suitable Banach space $\mathcal X $
and exhibiting a spectral gap, Giulietti et al \cite[Theorem~F]{GKLM} showed a variational principle similar to the one
in Corollary~\ref{corPE} with an entropy-like function $\mathbf h_{\mathcal X}$ computed by
$$
\mathbf h_{\mathcal X}(\mu)=\inf_{\phi \,\in\, \mathcal X} \Big\{ \log \lambda_{\mathcal X}(\phi) - \int \phi\, d\mu\Big\}
$$
for every $f$-invariant probability measure $\mu$, where $\lambda_{\mathcal X}(\phi)$ denotes
the spectral radius of the transfer operator $\mathcal L_\phi : \mathcal X \to \mathcal X$.
In general, since $\mathcal X \subsetneq B_m(X)$ one has $h_\mu(f) \leqslant \mathfrak h_B(\mu) \leqslant \mathbf h_{\mathcal X}(\mu)$ for every $\mu\,\in\, \cP_f(X)$.
In the special case that $f$ is a Ruelle expanding map on a compact metric space $X$
and $\mathcal X = C^{\alpha}(X)$, $\alpha>0$, the spectral radius of the operator $\mathcal L_\phi$ acting on both spaces
$C^{\alpha}(X)$ and $C(X)$ coincide and the three notions of entropy (with $B=C(X)$) are the same.

\subsection{Finitely additive equilibrium states and second order phase transition}\label{sec:second-order}

Consider the Manneville-Pomeau family of maps $f_{\alpha}:[0,1]\to [0,1]$, $\alpha>0$, given by
\begin{equation}\label{eq:LSV}
f_{\alpha}(x)= \left\{\begin{array}{ll}
x\,(1+2^\alpha x^{\alpha}) & \quad \text{if } x \,\in\, [0,\frac12[ \\[0.2cm]
2x-1 & \quad \text{if } x\,\in \,[\frac12,1].
\end{array}
\right.
\end{equation}
It is known that the metric entropy map of each $f_\alpha$ is upper semi-continuous, and that this family exhibits phase transitions with respect to the potentials $\varphi_{\alpha,t} = -t\,\log \,|f_\alpha^{\prime}|$, parameterized by $t \in \mathbb R$. For instance:
\smallskip

\begin{enumerate}
\item[(MP$_1$)] If $\alpha>0$ and $t\in \,\,]-\infty,1[$, there exists a unique equilibrium state $\mu_{\alpha,t}$ for
$f_\alpha$ and $\varphi_{\alpha,t}$.
\medskip
\item[(MP$_2$)] If $\alpha>0$, the map $t \in [1,+\infty[ \,\,\mapsto \,\Ptop(-t\,\log\, |f_\alpha'|)$ is equal to zero
and the Dirac measure $\delta_0$ is an equilibrium state with respect to $\varphi_{\alpha,t}$.
\medskip
\item[(MP$_3$)] If $0 < \alpha < 1$, there exist two equilibrium states with respect to $\varphi_{\alpha,1}$, namely the Dirac $\delta_0$ and an $f_\alpha$-invariant {probability} measure $\mu_{\alpha,1}$ which is absolutely continuous with respect to the Lebesgue measure; moreover,
the map $t \in \mathbb R \, \mapsto \,\Ptop(-t\,\log\, |f_\alpha'|)$ is not $C^1$.
\medskip
\item[(MP$_4$)] If $\alpha \geqslant 1$, there exists a unique equilibrium state for $\varphi_{\alpha,t}$ for any $t\in \mathbb R$; moreover, the map $t \in \mathbb R\, \mapsto \,\Ptop(-t\,\log\, |f_\alpha'|)$ is $C^1$, but not $C^2$,
and there is an $f_\alpha$-invariant, $\sigma$-finite and infinite measure which is absolutely continuous with respect to the Lebesgue measure.
\end{enumerate}
\smallskip
We refer the reader to \cite{BTT,Lo,Ve} for an ample discussion on phase transitions of the Manneville-Pomeau family.
\medskip

It is worth noticing that the $C^1$-smoothness of the pressure is compatible with the presence of second-order phase transitions. Yet, in the special case of the Manneville-Pomeau maps and $\alpha\geqslant 1$, the second order phase transitions for the potential $\varphi_{\alpha,t} \in C^\alpha([0,1]) \subset C^0([0,1])$ can be detected as first order phase transitions when one considers the pressure function defined on the space of bounded observables or, equivalently, when one replaces the space of invariant probability measures by the larger set of invariant finitely additive measures. More precisely, let $P(f_\alpha, \cdot): B_m([0,1]) \to \mathbb R$ be the pressure function defined by $P(f_\alpha, \varphi)=\log r(\mathcal{L}_\varphi)$ (recall Lemma~\ref{le:piecewisespectral} and the definition of the transfer operator
$\mathcal{L}_\varphi$ in ~\eqref{def:transfer}). Then
\begin{equation}\label{eq:VPMP001}
P(f_\alpha, \varphi) \,\,= \,\,\max_{\mu \,\in \,\mathcal P_a([0,1])} \,\Big\{ \mathfrak{h}(\mu) + \int \varphi \, d\mu \Big\}
\end{equation}
where
$$
\mathfrak{h}(\mu)=\inf_{\varphi\,\in \,B_m([0,1])} \, \,\Big\{ \log r(\mathcal{L}_\varphi) - \int \varphi \,d\mu\Big\}
$$
for every finitely additive Borel probability measure $\mu$. As $P(f_\alpha, \log |f_\alpha'|) = \log r(\mathcal{L}_{-\log |f_\alpha'|}) = 0$ and
$$
\|\mathcal{L}^n_\varphi\|_\infty \geqslant \|\mathcal{L}^n_\varphi (\mathbf 1)\|_\infty \geqslant (\mathcal{L}^n_\varphi (\mathbf 1))(0) \geqslant e^{n \varphi(0)}
$$
for every $\varphi\in B_m([0,1])$, one concludes that $\mathfrak{h}(\delta_0)\geqslant 0$. Moreover, for every $f_\alpha$-invariant probability measure $\mu$,
\begin{align*}
\mathfrak{h}(\mu)
	& =\inf_{\varphi\,\in \,B_m([0,1])} \, \,\Big\{\log r(\mathcal{L}_\varphi) - \int \varphi \,d\mu\Big\}
	\, \leqslant \,\inf_{\varphi\,\in \,C^0([0,1])} \, \,\Big\{\log r(\mathcal{L}_\varphi) - \int \varphi \,d\mu\Big\} \\
	& = \inf_{\varphi\,\in \,C^0([0,1])} \, \,\Big\{P(f_\alpha,\varphi) - \int \varphi \,d\mu\Big\}
	\,\,=\,\, h_\mu(f_\alpha)
\end{align*}
where the last equality is due to the fact that the metric entropy map of $f_\alpha$ is upper semi-continuous, while the last but one is a consequence of the conjugation between $f_\alpha$ and the doubling map, 
for which such an equality holds (see e.g. \cite{CRV-Rec}). Therefore, $\mathfrak{h}(\delta_0) \leqslant h_\mu(\delta_0)=0$; hence $\mathfrak{h}(\delta_0) = 0$. In the particular case of $\varphi = -\log |f_\alpha'|$ we get
\begin{align*}
P(f_\alpha, -\log |f_\alpha'|) \,\,=\,\, 0
	\,= \max_{\mu \,\in \,\mathcal P_a([0,1])} \,\Big\{ \mathfrak{h}(\mu) - \int \log |f_\alpha'| \, d\mu \Big\}
\end{align*}
and the Dirac measure $\delta_0$ attains the maximum.

\medskip

A second equilibrium state appears while one looks for absolutely continuous finitely additive invariant measures. Consider the Lebesgue measure on $[0,1]$ (which we abbreviate into $\text{Leb}$) and the Banach space $L^\infty(\text{Leb}, [0,1])$ of equivalent classes of the essentially bounded real-valued Lebesgue measurable maps $\varphi: [0,1] \to \mathbb{R}$, under the relation $\varphi \backsim \psi$ if and only if $\varphi = \psi$ at $\text{Leb}$ almost everywhere. Endow $L^\infty(\text{Leb}, [0,1])$ with the essential supremum norm. The Hewitt-Yosida representation theorem (cf. \cite{HY}) informs that the dual of $L^\infty(\text{Leb}, [0,1])$ is the space of bounded finitely additive measures $m$ on the Borel subsets of $[0,1]$ which are absolutely continuous with respect to  $\text{Leb}$ (in the sense that $\text{Leb}(A) = 0$ implies $m(A)=0$), with the norm of total variation. Denote by $\mathcal{P}_{a, \, \text{Leb}}([0,1])$ the set of normalized elements of $\big(L^\infty(\text{Leb}, [0,1])\big)^*$ and by $\mathcal{P}_{a, \, \text{Leb}}(f_\alpha) \subset \mathcal{P}_{a, \, \text{Leb}}([0,1])$ the subset of those elements which are $f_\alpha$-invariant. According to \cite{CER}, the space $\mathcal{P}_{a, \, \text{Leb}}(f_\alpha)$ is non-empty.

\medskip

Given $\mu \in \big(L^\infty(\text{Leb}, [0,1])\big)^*$, consider the upper semi-continuous entropy function
\begin{equation}\label{def:Bakhtin-def}
H(\mu) = \left\{\begin{array}{ll}
\inf_{\varphi \, \in \, L^\infty(\text{Leb}, [0,1])} \quad \big\{\log r(\widehat{\mathcal{L}_\varphi})  - \int \varphi \,d\mu\big\}
& \quad \text{ if $\mu$ is $f_\alpha$-invariant}\\
-\infty & \quad \text{ otherwise}
\end{array}
\right.
\end{equation}
where $r(\widehat{\mathcal{L}_\varphi})$ is the spectral radius of a transfer operator $\widehat{\mathcal{L}_\varphi}$ obtained as the extension to $L^\infty(\text{Leb}, [0,1])$ of the operator $\mathcal{L}_\varphi$ acting on $B_m([0,1])$ (for more details we refer the reader to \cite{Bak}). The previous definition allows us to build a pressure function on $L^\infty(\text{Leb}, [0,1])$. Given $\varphi \in L^\infty(\text{Leb}, [0,1])$, define
\begin{equation}\label{eq:defptil}
\widehat{P}(f_\alpha, \, \varphi) \,\,=\,\, \sup_{\mu \, \in \, \mathcal{P}_{a, \, \text{Leb}}(f_\alpha)}\,\Big\{H(\mu) + \int \varphi \,d\mu\Big\}.
\end{equation}
By the Legendre-Fenchel duality, 
$$\widehat{P}(f_\alpha, \, \varphi) \,\,=\,\, \log r(\widehat{\mathcal{L}_\varphi}).$$
Moreover, $\widehat{P}$ satisfies the following variational principle (cf. \cite[Theorem 1]{Bak})
\begin{equation}\label{ig:defptil}
\widehat{P}(f_\alpha, \, \varphi) \,\, = \max_{\mu \, \in \, \mathcal{P}_{a, \, \text{Leb}}(f_\alpha)}\,\Big\{ H(\mu) + \int \varphi \,d\mu\Big\}.
\end{equation}
In addition, as the spectral radius can be evaluated using the constant map $\textbf{1}$, which belongs to both $B_m([0,1])$ and $L^\infty(\text{Leb}, [0,1])$, one has
$$\log r(\widehat{\mathcal{L}_{-\log|f'_\alpha|}}) \,\,= \,\, \log r(\mathcal{L}_{-\log|f'_\alpha|}) \,\,=\,\, 0.$$
Consequently,
$$0 \,\,=\,\, P(f_\alpha,-\log|f'_\alpha|) \,\,=\,\, \widehat{P}(f_\alpha,-\log|f'_\alpha|)	
 	\,\,= \max_{\mu \,\in\, \mathcal P_{a, Leb}(f_\alpha)} \,\Big\{H(\mu) - \int \log|f'_\alpha| d\mu\Big\}.$$

\medskip

Let $m_\alpha$ be a finitely additive probability measure absolutely continuous with respect to $\text{Leb}$ where the previous maximum is attained, that is,
$$H(m_\alpha) \,\,= \,\ \int  \log |f^\prime_\alpha| \, dm_\alpha.$$
Then
\begin{eqnarray*}
0 \,\,&=&\,\, \widehat{P}(f_\alpha,-\log|f'_\alpha|) \,\, = \,\,H(m_\alpha) - \int  \log |f^\prime_\alpha| \, dm_\alpha \\
&=& \inf_{\varphi\, \in\, L^\infty(Leb, [0,1])}\, \Big\{\log r(\widehat{\mathcal{L}_\varphi}) -  \int \varphi \,d m_\alpha\Big\}  - \int  \log |f^\prime_\alpha| \, dm_\alpha \\
& \leqslant	& \inf_{\varphi \,\in \,B_m([0,1])} \,\Big\{\log r(\mathcal L_\varphi) -  \int \varphi \,d m_\alpha\Big\} - \int  \log |f^\prime_\alpha| \, dm_\alpha \\
&=& \mathfrak{h}(m_\alpha) - \int  \log |f^\prime_\alpha| \, dm_\alpha  \,\,\leqslant\,\, P(f_\alpha,-\log|f'_\alpha|) \,\,=\,\,0.
\end{eqnarray*}

\noindent Thus, summoning {the previous information on $m_{\alpha}$ and $\delta_0$}, and Corollary~\ref{thm:Gateaux}, one deduces that:

\begin{corollary}
If $(f_\alpha)_{\alpha \, \geqslant  \, 1}$ is the Manneville-Pomeau family \eqref{eq:LSV}, then $\#\,\mathcal T_{\varphi_{\alpha,1}}(\Ptop) \geqslant 2$. In particular, for every $\alpha\geqslant 1$ the map
$\Ptop: \,C^0([0,1]) \to \mathbb R$ is Gateaux differentiable, though its extension to $B_m([0,1])$ is not Gateaux differentiable at $\varphi_{\alpha,1}$.
\end{corollary}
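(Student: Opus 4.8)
The plan is to show that the transfer--operator pressure $P(f_\alpha,\cdot)\colon B_m([0,1])\to\mathbb R$, $\varphi\mapsto\log r(\mathcal L_\varphi)$ --- which is the extension to $B_m([0,1])$ of the classical topological pressure $\Ptop(f_\alpha,\cdot)$ on $C^0([0,1])$, since the two coincide on $C^0([0,1])$ via the conjugacy of $f_\alpha$ with the doubling map --- admits (at least) two distinct tangent functionals at $\varphi_{\alpha,1}=-\log|f_\alpha'|$, and then to run this through the abstract differentiability dictionary of Corollary~\ref{thm:Gateaux}. Concretely, I would first record, from the displays preceding the statement, that both $\delta_0$ and the finitely additive measure $m_\alpha$ introduced there lie in $\mathcal E_{\varphi_{\alpha,1}}\!\left(P(f_\alpha,\cdot)\right)$: indeed $\mathfrak h(\delta_0)=0$ and $\int\varphi_{\alpha,1}\,d\delta_0=-\log|f_\alpha'(0)|=0$, so $\mathfrak h(\delta_0)+\int\varphi_{\alpha,1}\,d\delta_0=0=P(f_\alpha,\varphi_{\alpha,1})$, while the chain of inequalities established above forces $\mathfrak h(m_\alpha)-\int\log|f_\alpha'|\,dm_\alpha=0$, i.e.\ $\mathfrak h(m_\alpha)+\int\varphi_{\alpha,1}\,dm_\alpha=0=P(f_\alpha,\varphi_{\alpha,1})$. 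Since $m_\alpha$ is absolutely continuous with respect to $\text{Leb}$ and $\text{Leb}(\{0\})=0$, we have $m_\alpha(\{0\})=0\neq 1=\delta_0(\{0\})$, hence $\delta_0\neq m_\alpha$ in $\mathcal P_a([0,1])$. By Lemma~\ref{le:piecewisespectral}, $P(f_\alpha,\cdot)$ is a pressure function, so Theorem~\ref{thm:tangent-functionals} gives $\mathcal T_{\varphi_{\alpha,1}}\!\left(P(f_\alpha,\cdot)\right)=\mathcal E_{\varphi_{\alpha,1}}\!\left(P(f_\alpha,\cdot)\right)\supseteq\{\delta_0,m_\alpha\}$, whence $\#\,\mathcal T_{\varphi_{\alpha,1}}(\Ptop)\geqslant 2$; and then Corollary~\ref{thm:Gateaux} (Gateaux differentiability is equivalent to uniqueness of the tangent functional) shows at once that this extension is not Gateaux differentiable at $\varphi_{\alpha,1}$.

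It remains to see that $\Ptop(f_\alpha,\cdot)\colon C^0([0,1])\to\mathbb R$ \emph{is} Gateaux differentiable at $\varphi_{\alpha,1}$. Since $[0,1]$ is compact and $\htop(f_\alpha)<+\infty$, $\Ptop(f_\alpha,\cdot)$ is a pressure function on $C^0([0,1])=C_b([0,1])$ (cf.\ Subsection~\ref{sse:topol-pressure}), so by Theorem~\ref{thm:tangent-functionals} its tangent functionals at $\varphi_{\alpha,1}$ are exactly the Borel probability measures maximizing $\mu\mapsto\mathfrak h_\mu(f_\alpha)+\int\varphi_{\alpha,1}\,d\mu$; by Theorem~\ref{thm:second-main}(c) these are $f_\alpha$--invariant, and since the metric entropy map of $f_\alpha$ is upper semi-continuous one has $\mathfrak h_\mu(f_\alpha)=h_\mu(f_\alpha)$ on $\mathcal P_{f_\alpha}([0,1])$ (see the remark following Theorem~\ref{thm:second-main}), so they coincide with the classical equilibrium states for $\varphi_{\alpha,1}$. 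By (MP$_4$), for every $\alpha\geqslant 1$ there is a unique such equilibrium state, hence a unique tangent functional, and a second application of Corollary~\ref{thm:Gateaux} yields the asserted Gateaux differentiability on $C^0([0,1])$. Combining the two halves completes the proof.

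The only point requiring care is pure bookkeeping: one must be certain that $m_\alpha$, a priori only an element of $\bigl(L^\infty(\text{Leb},[0,1])\bigr)^{*}$, is legitimately viewed as an element of $\mathcal P_a([0,1])$ --- so that $\delta_0$ and $m_\alpha$ sit in the common domain on which $P(f_\alpha,\cdot)$ and its entropy function $\mathfrak h$ act and Theorem~\ref{thm:tangent-functionals} is applicable --- and that the equalities $\mathfrak h(\delta_0)=0$ and $\mathfrak h(m_\alpha)=\int\log|f_\alpha'|\,dm_\alpha$ are precisely those already obtained for the entropy function attached to $P(f_\alpha,\cdot)\colon B_m([0,1])\to\mathbb R$ by Theorem~\ref{thm:main}. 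Both are secured in the discussion preceding the statement, so no new estimate is needed; everything else is a formal application of Theorems~\ref{thm:tangent-functionals} and \ref{thm:second-main}, Corollary~\ref{thm:Gateaux}, and the identification of $P(f_\alpha,\cdot)$ with the classical topological pressure on $C^0([0,1])$.
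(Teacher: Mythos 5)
Your proposal is correct and follows essentially the same route as the paper: exhibit $\delta_0$ and the absolutely continuous finitely additive measure $m_\alpha$ as two distinct maximizers of the variational principle for the transfer-operator pressure $P(f_\alpha,\cdot)=\log r(\mathcal L_\cdot)$ on $B_m([0,1])$ at $\varphi_{\alpha,1}$, invoke Theorem~\ref{thm:tangent-functionals} to identify maximizers with tangent functionals, and conclude via Corollary~\ref{thm:Gateaux}, with the Gateaux differentiability on $C^0([0,1])$ coming from (MP$_4$), the upper semi-continuity of the entropy map and the identification of tangent functionals with classical equilibrium states. You merely make explicit a few points the paper leaves implicit (the evaluation $\int\varphi_{\alpha,1}\,d\delta_0=0$, the distinctness $m_\alpha(\{0\})=0\neq\delta_0(\{0\})$, and the bookkeeping placing $m_\alpha$ in $\mathcal P_a([0,1])$), which is consistent with the paper's argument.
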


\section{Non-additive sequences of continuous potentials: Proof of Theorem~\ref{thm:non-additive}}\label{se:thermo-formalism-0}
In this section, we fix a continuous endomorphism of a compact metric space $X$ and, instead of the classic pressure function with respect to a given potential $\varphi:\,X \to \mathbb{R}$ and the sequence of sums $(S_n \varphi)_{n \, \in \, \mathbb{N}}$, we consider non-additive sequences of continuous potentials. Although these objects arise naturally in the study of Lyapunov exponents and dimension theory, the non-additive thermodynamic formalism is still barely understood.
We refer the reader to \cite{Ba96,Ba06,FK12,Noe} for a thorough discussion on these topics.

Let $f: X \to X$ be a continuous map on a compact metric space $(X,d)$. We say that a sequence $\Phi=(\varphi_n)_{n \, \in \, \mathbb{N}} \in C(X)^\mathbb{N}$ of continuous potentials is
\medskip

\begin{enumerate}
\item \emph{sub-additive} if $\, \varphi_{m+n}\,\leqslant\, \varphi_{m}+\varphi_{n}\circ f^m \quad  \,\forall \,m,\,n\, \in \, \mathbb{N}$;
\medskip
\item \emph{almost additive} if there exists a uniform constant $C>0$ such that
$$\varphi_{m}+\varphi_{n}\circ f^m-C \,\leqslant\, \varphi_{m+n}\,\leqslant \, \varphi_{m}+\varphi_{n}\circ f^m+C \quad \quad \forall \,m,\,n\, \in \, \mathbb{N};$$
\item \emph{asymptotically additive} if for any $\varepsilon > 0$ there exists $\varphi_\varepsilon \in C(X)$ such that
$$\limsup_{n\,\to\, +\infty} \,\frac1n \,\Big\| \varphi_n - \sum_{j=0}^{n-1}\,\varphi_\varepsilon \circ f^j \Big\|_\infty < \varepsilon.$$
\end{enumerate}

It is known that every almost additive sequence is asymptotically additive, and that for every asymptotically additive sequence $\Phi=(\varphi_n)_{n \, \in \, \mathbb{N}}  \in C(X)^\mathbb{N}$ there exists $\varphi \in C(X)$ such that
$$\limsup_{n\,\to\, +\infty} \,\frac1n\, \Big\|\varphi_n -\sum_{j=0}^{n-1}\,\varphi \circ f^j\Big\|_\infty=0$$
(cf. \cite{Noe,FH}). Therefore, both the variational principle and the existence of finitely additive equilibrium states established in Theorem~\ref{thm:second-main} admit an immediate generalization to this context
(the modifications necessary to deal with sequences in other Banach spaces are left as an easy exercise to the interested reader.)
 Henceforth, we will aim at the more general context of sub-additive sequences of continuous potentials.

\begin{definition}\label{def:press}
Given a sequence $\Phi=(\varphi_n)_{n \, \in \, \mathbb{N}} \subset C(X)$ of continuous potentials, the \emph{non-additive topological pressure} is defined by
\begin{equation}\label{eq:P-non-additive}
P(f, \,\Phi) = \lim_{\vep\,\to\,0^+}\, \limsup_{n\,\to\,+\infty}\, \frac1n \,\log \Big(\sup_E \,\sum_{x\,\in\, E}\, e^{\varphi_n(x)}\Big)
\end{equation}
where the supremum is taken over the $(n,\vep)$-separated subsets $E$ of $X$.
\end{definition}

This definition coincides with the usual notion of topological pressure $P_{\text{top}}(f,\varphi)$ when there is $\varphi \in C(X)$ such that $\varphi_n = \sum_{j=0}^{n-1}\,\varphi\circ f^j$ for every $n \in \mathbb{N}$. It is known (cf. \cite{Ba06}) 
that every almost additive sequence of continuous potentials which have bounded distortion admits a unique equilibrium state, which is a Gibbs measure. More recently, it was proved in \cite{Noe} that any almost additive or asymptotically additive sequence of continuous potentials have the same pressure of an additive sequence associated to a continuous potential. However, it is not known whether this potential inherits the distortion properties of the original almost additive sequence. Moreover, for sub-additive sequences of continuous potentials no general construction of equilibrium states is known, though for these sequences Cao, Feng and Huang~\cite{CFH08} established the following general variational principle.

\begin{theorem}\cite{CFH08}\label{thm:Chong} Given a continuous self-map $f: X \to X$ of a compact metric space $X$, if $\Phi=(\varphi_n)_{n \, \in \, \mathbb{N}}$ is a sub-additive sequence of continuous potentials such that
$P(f, \,\Phi) > -\infty$, then
\begin{equation}\label{eq:Chong}
P(f, \,\Phi) \,= \sup_{\mu \,\in \,\mathcal P_f(X)}\,\Big\{h_\mu(f) +
\mathcal F_*(\Phi,\,\mu) \Big\}
\end{equation}
where, for every $f$-invariant probability measure $\mu$,
$$\cF_*(\Phi,\,\mu)\,:=\, \lim_{n\, \to \, + \infty}\, \frac1n \,\int \varphi_n \,d\mu.$$
\end{theorem}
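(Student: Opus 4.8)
The plan is to follow the classical two-inequality scheme for the variational principle (Walters' concavity estimate for the lower bound, Misiurewicz's argument for the upper bound), replacing the Birkhoff sums $S_n^f\varphi$ everywhere by the terms $\varphi_n$ and replacing additivity by the one-sided inequality $\varphi_{m+n}\leqslant\varphi_m+\varphi_n\circ f^m$, exactly as in \cite{CFH08}. First I would collect the elementary facts about the sub-additive data. By Fekete's lemma applied to $n\mapsto\int\varphi_n\,d\mu$, the limit $\cF_*(\Phi,\mu)$ exists for every $\mu\in\mathcal P_f(X)$ and equals $\inf_{n\geqslant1}\tfrac1n\int\varphi_n\,d\mu$; hence $\mu\mapsto\cF_*(\Phi,\mu)$ is upper semi-continuous on $\mathcal P_f(X)$ (an infimum of the weak$^*$-continuous maps $\mu\mapsto\tfrac1n\int\varphi_n\,d\mu$) and, via the ergodic decomposition together with Kingman's sub-additive ergodic theorem, it is affine. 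I would also record that the separated-set quantity in \eqref{eq:P-non-additive} agrees with the open-cover formulation $P(f,\Phi)=\lim_{\varepsilon\to0^+}\limsup_n\tfrac1n\log\inf\{\sum_{U\in\mathcal U}e^{\sup_U\varphi_n}\colon\ \mathcal U\ \text{open cover of diameter}<\varepsilon\}$, the usual separated/spanning/cover comparison going through verbatim since it never uses additivity. The hypothesis $P(f,\Phi)>-\infty$ is used to keep all the quantities below meaningful.

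For the inequality $P(f,\Phi)\geqslant\sup_{\mu}\{h_\mu(f)+\cF_*(\Phi,\mu)\}$, fix $\mu\in\mathcal P_f(X)$ and a finite partition $\xi$ with $\mu(\partial\xi)=0$ and $\diam(\xi)$ small; writing $\xi_n=\bigvee_{j=0}^{n-1}f^{-j}\xi$, the concavity-of-logarithm estimate gives
$$H_\mu(\xi_n)+\int\varphi_n\,d\mu\ \leqslant\ \log\sum_{A\in\xi_n}e^{\sup_A\varphi_n},$$
an inequality valid for \emph{any} function $\varphi_n$, so no additivity is needed. Dividing by $n$ and letting $n\to\infty$ yields $h_\mu(f,\xi)+\cF_*(\Phi,\mu)\leqslant P(f,\Phi)$ after comparing the right-hand side with the open-cover formulation above; taking the supremum over $\xi$ and then over $\mu$ gives the bound.

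For the reverse inequality, fix $\varepsilon>0$ and choose, along a subsequence $n_k\to\infty$ realizing the $\limsup$ in \eqref{eq:P-non-additive}, $(n_k,\varepsilon)$-separated sets $E_k$ with $\sum_{x\in E_k}e^{\varphi_{n_k}(x)}\geqslant e^{n_k(P(f,\Phi)-\varepsilon)}$. Put $\sigma_k=\big(\sum_{y\in E_k}e^{\varphi_{n_k}(y)}\big)^{-1}\sum_{x\in E_k}e^{\varphi_{n_k}(x)}\,\delta_x$, form the Ces\`aro averages $\mu_k=\tfrac1{n_k}\sum_{j=0}^{n_k-1}f^j_*\sigma_k$, and let $\mu\in\mathcal P_f(X)$ be a weak$^*$-accumulation point of $(\mu_k)$. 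For a partition $\xi$ of diameter $<\varepsilon$ with $\mu(\partial\xi)=0$, distinct points of $E_k$ lie in distinct atoms of $\xi_{n_k}$, so $H_{\sigma_k}(\xi_{n_k})=\log\big(\sum_y e^{\varphi_{n_k}(y)}\big)-\int\varphi_{n_k}\,d\sigma_k\geqslant n_k(P(f,\Phi)-\varepsilon)-\int\varphi_{n_k}\,d\sigma_k$; and the Misiurewicz bookkeeping (splitting $\{0,\dots,n_k-1\}$ into blocks of length $q$, using subadditivity of the static entropy $n\mapsto H_{(\cdot)}(\xi_n)$ and concavity of $A\mapsto H_A(\xi_q)$, then passing to the limit $k\to\infty$ using $\mu_k\to\mu$) yields $h_\mu(f)\geqslant h_\mu(f,\xi)\geqslant\limsup_k\tfrac1{n_k}H_{\sigma_k}(\xi_{n_k})$. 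Combining these gives
$$h_\mu(f)\ \geqslant\ P(f,\Phi)-\varepsilon-\limsup_{k\to\infty}\tfrac1{n_k}\int\varphi_{n_k}\,d\sigma_k.$$
The crucial non-additive ingredient is the lemma that $\limsup_k\tfrac1{n_k}\int\varphi_{n_k}\,d\sigma_k\leqslant\cF_*(\Phi,\mu)$: for fixed $m$, write $n_k=q_km+r_k$ with $0\leqslant r_k<m$; subadditivity gives $\varphi_{n_k}\leqslant\sum_{i=0}^{q_k-1}\varphi_m\circ f^{im}+C$ up to a term controlled by $r_k$ and $\max_{1\leqslant s\leqslant m}\|\varphi_s\|_\infty$, hence $\tfrac1{n_k}\int\varphi_{n_k}\,d\sigma_k\leqslant\tfrac1m\int\varphi_m\,d\mu_k+o_k(1)\to\tfrac1m\int\varphi_m\,d\mu$ (after also averaging over the $m$ possible block origins), and one concludes by letting $m\to\infty$. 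Feeding this into the display and letting $\varepsilon\to0$ gives $P(f,\Phi)\leqslant\sup_\mu\{h_\mu(f)+\cF_*(\Phi,\mu)\}$, hence equality.

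The main obstacle is precisely this last lemma bounding $\tfrac1{n_k}\int\varphi_{n_k}\,d\sigma_k$ by $\cF_*(\Phi,\mu)$: since $\sigma_k$ is not $f$-invariant, one cannot integrate the subadditivity inequality against it and pass to the limit in a single step, so the estimate must be routed through the Ces\`aro averages $\mu_k$ and the weak$^*$ limit $\mu$, with the combinatorial error coming from the incomplete last block and from reindexing the $q_k$ blocks kept uniformly small in $k$. Everything else is a routine transcription of Misiurewicz's argument, with $S_n^f\varphi$ replaced by $\varphi_n$ and each use of additivity replaced by $\varphi_{m+n}\leqslant\varphi_m+\varphi_n\circ f^m$.
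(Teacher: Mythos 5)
You should first note that the paper itself contains no proof of Theorem~\ref{thm:Chong}: it is quoted from Cao--Feng--Huang \cite{CFH08}, so the only proof to compare with is theirs. Your argument for the inequality $P(f,\Phi)\leqslant\sup_{\mu}\{h_\mu(f)+\mathcal F_*(\Phi,\mu)\}$ is essentially their argument: the Misiurewicz construction from weighted $(n_k,\varepsilon)$-separated sets, combined with the key lemma $\limsup_k\frac1{n_k}\int\varphi_{n_k}\,d\sigma_k\leqslant\mathcal F_*(\Phi,\mu)$ for the weak$^*$ limit $\mu$ of the Ces\`aro averages, proved by cutting $\{0,\dots,n_k-1\}$ into blocks of length $m$, averaging over the $m$ block origins so that $\mu_k$ appears, and letting $m\to\infty$; you correctly single this out as the genuinely non-additive step of that half. (A small slip: for fixed $\varepsilon$ you can only extract $E_k$ realizing the inner $\limsup$ in \eqref{eq:P-non-additive} at scale $\varepsilon$, which is bounded above by $P(f,\Phi)$ but need not be close to it; this is harmless because you send $\varepsilon\to 0$ at the end.)

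The gap is in the other inequality. After the correct, additivity-free estimate $H_\mu(\xi_n)+\int\varphi_n\,d\mu\leqslant\log\sum_{A\in\xi_n}e^{\sup_A\varphi_n}$, you assert that the resulting cover/sup quantity agrees with the separated-set, point-value pressure \eqref{eq:P-non-additive} because the usual separated/spanning/cover comparisons ``never use additivity''. They do: the classical passage from $\sup_A\varphi_n$ (or $\sup_U$ over cover elements) to values $\varphi_n(x)$ at separated points rests on the distortion bound $\sup_{d_n(x,y)<\varepsilon}|S^f_n\varphi(x)-S^f_n\varphi(y)|\leqslant n\,\sup_{d(x,y)<\varepsilon}|\varphi(x)-\varphi(y)|$, which has no analogue for a general sub-additive sequence; the possible discrepancy between cover-type and separated-set definitions is precisely the comparability issue that forced bounded-variation hypotheses in earlier non-additive variational principles and that \cite{CFH08} is designed to avoid, so this step cannot be waved through ``verbatim''. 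To repair it you must work at the level of point values: for instance, pick $x_A\in\overline{A}$ attaining $\sup_A\varphi_n$, extract from $\{x_A\}$, greedily in decreasing order of $\varphi_n(x_A)$, an $(n,\delta)$-separated subset ($\delta$ a Lebesgue number of the auxiliary cover in Walters' proof), accept the multiplicity factor $2^n$, and then remove the resulting $\log 2$ by applying the estimate to $f^m$ with the sub-additive sequence $(\varphi_{mn})_n$, using $P\big(f^m,(\varphi_{mn})_n\big)\leqslant m\,P(f,\Phi)$ in place of the additive identity $P(f^m,S^f_m\varphi)=mP(f,\varphi)$ (fortunately the inequality goes in the direction you need); alternatively, prove $h_\mu(f)+\mathcal F_*(\Phi,\mu)\leqslant P(f,\Phi)$ first for ergodic $\mu$ via Katok's entropy formula together with Kingman's theorem (choosing $(n,\varepsilon)$-separated sets inside a large-measure set where $\varphi_n\geqslant n(\mathcal F_*(\Phi,\mu)-\gamma)$), and then pass to general $\mu$ by ergodic decomposition, which is where the affinity of $\mathcal F_*(\Phi,\cdot)$ you recorded is actually needed. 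As written, this half of the proposal is not established.
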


We note that if $\Phi=(\varphi_n)_{n \, \in \, \mathbb{N}}$ is a sub-additive sequence of continuous potentials then $\cF_*(\Phi,\,\mu)$ is well defined for every $\mu \in \cP_f(X)$. In fact, the sequence of real numbers $(a_n)_{n\,\in\, \mathbb R}$ given by $a_n = \int \varphi_n \, d\mu$ is sub-additive, hence $\lim_{n\,\to\,+\infty}\, \frac1n \, a_n = \inf_{n \,\in\, \mathbb{N}}\, \frac1n \,a_n$ by Fekete's Lemma.

\subsection{An alternative variational principle for sub-additive sequences}
One might expect a counterpart of Theorem~\ref{thm:main} for a more general context of Banach spaces of sequences of functions. This faces non-trivial difficulties, though. Contrary to what happens within the simpler case of almost additive sequences, albeit providing a convex cone in the space of sequences of potentials, sub-additivity is not preserved under multiplication by negative numbers. This is a major obstruction since the entropy in Theorem~\ref{thm:main} is defined using observable maps $\varphi$ such that $-\varphi$ has non-positive pressure, which makes the Banach space generated by sub-additive sequences not suitable to this approach. To overcome this difficulty we will combine two pressure functions to which we apply Theorem~\ref{thm:main}, thus reducing the previous problem, concerning sub-additive
sequences of functions, to the construction of equilibrium states for a single bounded potential
(see Lemma~\ref{le:sub-sdditive}).

In order to apply Kingman's Sub-additive Ergodic Theorem, we need to narrow our analysis to the set
$$\mathcal S_b = \Big\{\Phi = (\varphi_n)_{n\,\in\, \mathbb N} \in C(X)^{\mathbb N} \colon  \Phi  \; \text{is sub-additive and }
\!\! \inf_{x\, \in \,X} \,\Big[\inf_{n\,\in\, \mathbb{N}}\, \frac1n \,\varphi_n(x) \Big] > -\infty\Big\}.$$
This set comprises relevant families of sequences of continuous potentials arising within the theory of linear cocycles, as we will detail on Subsection~\ref{subsec:cocycles}. We also observe that $\Phi \in \mathcal S_b$ if and only if it is sub-additive and $\cF_*(\Phi,\mu)>-\infty$ for every $f$-invariant probability measure $\mu$ (cf. \cite[pp. 336--337]{Sc98}). Moreover:

\begin{lemma}\label{le:sub-sdditive} Given $\Phi = (\varphi_n)_{n \, \in \, \mathbb{N}}  \in  \,\mathcal S_b$, the map $\psi_\Phi$ defined by
$$ x \,\in\, X \quad \mapsto \quad \psi_\Phi(x) = \inf_{n\, \in \,\mathbb{N}}\, \frac1n \,\varphi_n(x)$$
is measurable, upper semi-continuous, belongs to $B_m(X)$ and satisfies
$$\int \psi_\Phi \, d\mu  =  \cF_*(\Phi,\mu)  \quad \quad \forall\, \mu\, \in\,  \cP_f(X).$$
\end{lemma}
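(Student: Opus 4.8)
The plan is to dispatch the three structural assertions directly and then obtain the integral identity from Kingman's sub-additive ergodic theorem. For the structural part: each map $x\mapsto\frac1n\varphi_n(x)$ is continuous since $\varphi_n\in C(X)$, so $\psi_\Phi=\inf_{n\in\mathbb N}\frac1n\varphi_n$ is upper semi-continuous as a pointwise infimum of continuous functions, hence Borel measurable; and it is bounded, since $\psi_\Phi\leqslant\varphi_1\leqslant\|\varphi_1\|_\infty$ while the defining condition of $\mathcal S_b$ gives $\psi_\Phi(x)\geqslant\inf_{y\in X}\inf_{n\in\mathbb N}\frac1n\varphi_n(y)>-\infty$ for every $x$. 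Thus $\psi_\Phi\in B_m(X)$.

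For the identity, fix $\mu\in\cP_f(X)$. Since $\Phi$ is sub-additive and $\mu$ is $f$-invariant, the real sequence $a_n:=\int\varphi_n\,d\mu$ is sub-additive, so by Fekete's Lemma $\cF_*(\Phi,\mu)=\inf_n\frac1n a_n$. The inequality $\int\psi_\Phi\,d\mu\leqslant\cF_*(\Phi,\mu)$ is then immediate: integrating $\psi_\Phi\leqslant\frac1n\varphi_n$ gives $\int\psi_\Phi\,d\mu\leqslant\frac1n a_n$ for every $n$, and one takes the infimum over $n$. For the reverse inequality I would invoke Kingman's theorem: as $\max\{\varphi_1,0\}\in L^1(\mu)$ (it is continuous on the compact space $X$) and $\inf_n\frac1n a_n>-\infty$ (here the hypothesis $\Phi\in\mathcal S_b$ enters), for $\mu$-a.e.\ $x$ the limit $\Phi_*(x):=\lim_n\frac1n\varphi_n(x)$ exists, is finite and $f$-invariant, and $\int\Phi_*\,d\mu=\inf_n\frac1n a_n=\cF_*(\Phi,\mu)$. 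It then suffices to show $\psi_\Phi=\Phi_*$ at $\mu$-a.e.\ $x$, since then $\int\psi_\Phi\,d\mu=\int\Phi_*\,d\mu=\cF_*(\Phi,\mu)$. One inequality is free, $\psi_\Phi(x)\leqslant\liminf_n\frac1n\varphi_n(x)=\Phi_*(x)$; for the other I would, for each fixed $N$, iterate sub-additivity to obtain $\varphi_{kN}\leqslant\sum_{j=0}^{k-1}\varphi_N\circ f^{jN}$, divide by $kN$, let $k\to+\infty$, apply Birkhoff's ergodic theorem to the $\mu$-preserving map $f^N$, and finally let $N$ vary over $\mathbb N$.

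The step I expect to be the main obstacle is precisely this last one: Birkhoff applied to $f^N$ only yields $\Phi_*\leqslant\frac1N\,\mathbb E_\mu[\varphi_N\mid\mathcal I_{f^N}]$ $\mu$-a.e., where $\mathcal I_{f^N}$ is the $\sigma$-algebra of $f^N$-invariant sets, and this is weaker than the pointwise bound $\Phi_*\leqslant\frac1N\varphi_N$ that is needed to identify $\psi_\Phi$ with $\Phi_*$. Bridging this gap --- presumably through an ergodic-decomposition argument together with the additional structure carried by the sequences in $\mathcal S_b$, and in the intended applications by the singular-value potentials $\Phi_{\vec\alpha}$ of linear cocycles, for which Oseledets' theorem gives $\Phi_*(x)=\sum_{i=1}^k\alpha_i\lambda_i(A,x)$ at $\mu$-a.e.\ $x$ --- is where the real work lies, and it is the part of the argument I would want to check most carefully.
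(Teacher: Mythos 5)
Your structural argument and the inequality $\int\psi_\Phi\,d\mu\leqslant\cF_*(\Phi,\mu)$ are exactly the paper's. But the step you flagged as the main obstacle is a genuine gap, and it is not one you (or anyone) could close at this level of generality. The paper's own proof disposes of it in one line, by asserting that Kingman's theorem makes $\inf_{n}\frac1n\varphi_n$ and $\liminf_{n\to+\infty}\frac1n\varphi_n$ coincide $\mu$-almost everywhere; Kingman's theorem asserts no such thing. What it gives is precisely what your computation gives: the almost everywhere limit $\Phi_*$ exists, is $f$-invariant, satisfies $\Phi_*\leqslant\frac1N\,\mathbb{E}_\mu[\varphi_N\mid\mathcal I_{f^N}]$ for every $N$, and (by the uniform bounds coming from $\mathcal S_b$ and dominated convergence) integrates to $\cF_*(\Phi,\mu)$. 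The pointwise infimum $\psi_\Phi$ can be strictly smaller than $\Phi_*$ on a set of positive measure, because sub-additivity relates $\varphi_{m+n}(x)$ to $\varphi_n(f^m(x))$ rather than to $\varphi_n(x)$, so Fekete's lemma does not act pointwise along a single orbit.

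In fact the integral identity itself fails in $\mathcal S_b$, so no ergodic-decomposition or Oseledets argument can bridge your gap. Take $f$ an irrational rotation of the circle, $\mu$ Lebesgue, $\varphi(x)=\cos(2\pi x)$ and the additive (hence sub-additive, and uniformly bounded below) sequence $\varphi_n=\sum_{j=0}^{n-1}\varphi\circ f^j$; this is even a singular value potential sequence, with $\ell=1$ and $A(x)=e^{\varphi(x)}$. Then $\cF_*(\Phi,\mu)=\int\varphi\,d\mu=0$ and, by unique ergodicity, $\frac1n\varphi_n\to 0$ uniformly, so $\psi_\Phi\leqslant 0$ everywhere; on the other hand $\psi_\Phi\leqslant\varphi_1=\varphi<0$ on an interval of positive measure, whence $\int\psi_\Phi\,d\mu<0=\cF_*(\Phi,\mu)$. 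So only the inequality you actually proved, $\int\psi_\Phi\,d\mu\leqslant\cF_*(\Phi,\mu)$, holds for general $\Phi\in\mathcal S_b$ (equality needs extra hypotheses, e.g.\ uniform or pointwise monotone convergence of $\frac1n\varphi_n$), and your refusal to upgrade the conditional bound $\Phi_*\leqslant\frac1N\,\mathbb{E}_\mu[\varphi_N\mid\mathcal I_{f^N}]$ to the pointwise bound $\Phi_*\leqslant\frac1N\varphi_N$ was exactly the right instinct: in the ergodic case $\Phi_*$ is the constant $\cF_*(\Phi,\mu)$, while $\psi_\Phi$ genuinely drops below it on a positive-measure set.
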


\begin{proof}
For every $\Phi = (\varphi_n)_{n \, \in \, \mathbb{N}}  \in  \,\mathcal S_b$, the corresponding map $\psi_\Phi$ is measurable and upper semi-continuous, hence upper bounded on the compact $X$. Since $\Phi$ belongs to $\mathcal S_b$, the map $\psi_\Phi$ is also lower bounded. Moreover, by the Kingman's Sub-additive Ergodic Theorem (see \cite{Wa}), 
the maps $\inf_{n\, \in \,\mathbb{N}}\, \frac1n \,\varphi_n$ and $\liminf_{n\, \to +\infty}\, \frac1n \,\varphi_n$ coincide in a set with full measure and, for every $\mu\, \in\,  \cP_f(X)$ one has
$\int \,\liminf_{n\, \to +\infty}\, \frac1n \,\varphi_n \,\, d\mu  = \cF_*(\Phi,\mu).$
So, $\int \psi_\Phi \, d\mu  =  \cF_*(\Phi,\mu)$ as well.
\end{proof}

In the remaining of this subsection we restrict to $\mathcal S_b$ and consider $P(f, .)$ in order to improve the variational relation \eqref{eq:Chong}.
More precisely, we will show the following counterpart of
Theorem~\ref{thm:second-main} in this context, using $B_m(X)$ instead of $C(X)$.

\begin{theorem}\label{cor:additive-2}
Let $f$ be a continuous self-map of a compact metric space $(X,d)$ whose topological entropy is finite.
Then there exists an affine and upper semi-continuous entropy map ${\mathfrak h}_1: \mathcal{P}_a(X) \to \mathbb{R}$ such that
\begin{equation*}\label{eq:new-vp}
P(f, \,\Phi) =  \max_{\mu\, \in\, \mathcal P_a(X)}\, \Big\{{\mathfrak h}_1(\mu) + \int \psi_\Phi \, d\mu\Big\}
	\quad \quad \forall \,\Phi \in \mathcal S_b.
\end{equation*}
\end{theorem}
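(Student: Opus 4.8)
The plan is to package the non-additive pressure into a single abstract pressure function on $B_m(X)$ and then let Theorem~\ref{thm:main} do the work. Define
$$\Gamma \colon B_m(X)\longrightarrow\mathbb R,\qquad \Gamma(\psi)=\sup_{\mu\,\in\,\mathcal P_f(X)}\Big\{h_\mu(f)+\int\psi\,d\mu\Big\}.$$
First I would check that $\Gamma$ is a pressure function in the sense of Definition~\ref{def:pressure-function}. It is real valued precisely because $\htop(f)<+\infty$: one has $0\le h_\mu(f)\le\htop(f)$ and $\big|\int\psi\,d\mu\big|\le\|\psi\|_\infty$ for every $\mu\in\mathcal P_f(X)$, so $\sup_\mu\int\psi\,d\mu\le\Gamma(\psi)\le\htop(f)+\|\psi\|_\infty$. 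Monotonicity $(C_1)$ and translation invariance $(C_2)$ are inherited from the linear functionals $\psi\mapsto\int\psi\,d\mu$, while convexity $(C_3)$ is automatic since $\Gamma$ is a pointwise supremum of the affine maps $\psi\mapsto h_\mu(f)+\int\psi\,d\mu$. Applying Theorem~\ref{thm:main} to $\Gamma$ then yields an affine, upper semi-continuous map
$$\mathfrak h_1\colon\mathcal P_a(X)\to\mathbb R,\qquad \mathfrak h_1(\mu)=\inf_{\psi\,\in\,B_m(X)}\Big\{\Gamma(\psi)-\int\psi\,d\mu\Big\},$$
satisfying $\Gamma(\psi)=\max_{\mu\in\mathcal P_a(X)}\{\mathfrak h_1(\mu)+\int\psi\,d\mu\}$ for all $\psi\in B_m(X)$. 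Observe that $\mathfrak h_1$ is built once and for all and does not depend on $\Phi$, as the statement demands.

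The second step is to identify $\Gamma(\psi_\Phi)$ with $P(f,\Phi)$ for every $\Phi=(\varphi_n)_{n\in\mathbb N}\in\mathcal S_b$. By Lemma~\ref{le:sub-sdditive} the observable $\psi_\Phi(x)=\inf_n\frac1n\varphi_n(x)$ lies in $B_m(X)$, so $\Gamma(\psi_\Phi)$ makes sense, and $\mathcal F_*(\Phi,\mu)=\int\psi_\Phi\,d\mu$ for every $\mu\in\mathcal P_f(X)$. Moreover $\varphi_n\ge n\,\psi_\Phi\ge n\inf_X\psi_\Phi$ with $\inf_X\psi_\Phi>-\infty$ by the definition of $\mathcal S_b$; evaluating \eqref{eq:P-non-additive} on one-point $(n,\varepsilon)$-separated sets gives $P(f,\Phi)\ge\inf_X\psi_\Phi>-\infty$, so the Cao--Feng--Huang variational principle (Theorem~\ref{thm:Chong}) applies and yields $P(f,\Phi)=\sup_{\mu\in\mathcal P_f(X)}\{h_\mu(f)+\mathcal F_*(\Phi,\mu)\}=\Gamma(\psi_\Phi)$. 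Evaluating the variational identity for $\Gamma$ at $\psi=\psi_\Phi$ then gives $P(f,\Phi)=\max_{\mu\in\mathcal P_a(X)}\{\mathfrak h_1(\mu)+\int\psi_\Phi\,d\mu\}$, which is the claim.

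The conceptual obstacle — and the reason one cannot simply ``apply Theorem~\ref{thm:main} to sub-additive sequences'' — is that the family of sub-additive sequences of continuous potentials, although a convex cone, is not a vector space (it is not stable under multiplication by negative scalars), so there is no Banach space of observables on which the Fenchel--Legendre duality of Theorem~\ref{thm:main} can operate. The resolution is exactly the two moves above: first flatten $\Phi$ to the single upper semi-continuous bounded potential $\psi_\Phi$ without changing its averages against invariant measures (this is Kingman's theorem, already encapsulated in Lemma~\ref{le:sub-sdditive}), and then work inside $B_m(X)$, where Theorem~\ref{thm:main} does apply to $\Gamma$. The only non-formal point requiring care is the hypothesis $P(f,\Phi)>-\infty$ needed to invoke Theorem~\ref{thm:Chong}, which must be derived from the definition of $\mathcal S_b$ as indicated. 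Finally, one may note that, by the classical variational principle, $\Gamma$ restricted to $C(X)$ coincides with the ordinary topological pressure $P_{\mathrm{top}}(f,\cdot)$, so by Remark~\ref{rmk:relation-pressures} the entropy $\mathfrak h_1$ is dominated on $\mathcal P(X)$ by the map $\mathfrak h_f$ of Theorem~\ref{thm:second-main}; this is coherent with Remark~\ref{rmk:otherbound} but is not needed here.
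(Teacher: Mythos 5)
Your proposal is correct and follows essentially the same route as the paper: define the abstract pressure $\Gamma_1(\psi)=\sup_{\mu\in\mathcal P_f(X)}\{h_\mu(f)+\int\psi\,d\mu\}$ on $B_m(X)$, apply Theorem~\ref{thm:main} to get the affine, upper semi-continuous $\mathfrak h_1$, and then use Lemma~\ref{le:sub-sdditive} together with the Cao--Feng--Huang variational principle to identify $P(f,\Phi)=\Gamma_1(\psi_\Phi)$ for $\Phi\in\mathcal S_b$. Your extra verifications (that $\Gamma_1$ is a genuine pressure function and that $P(f,\Phi)>-\infty$) only fill in details the paper leaves implicit.
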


\begin{proof}
An effortless computation shows that the map $\Gamma_1: \,B_m(X) \to \mathbb{R}$ defined by
$$\psi \in B_m(X) \quad \mapsto \quad \Gamma_1(\psi) = \sup_{\mu \,\in \,\mathcal P_f(X)}\,\Big\{h_\mu(f) + \int \psi \, d\mu \Big\}$$
is a pressure function. Therefore, we may apply Theorem~\ref{thm:main} and conclude that there exists an affine and upper semi-continuous map ${\mathfrak h}_1: \mathcal{P}_a(X) \to \mathbb{R}$ such that, for every $\mu \in \mathcal{P}_a(X)$,
$${\mathfrak h}_1(\mu) =  \inf_{\psi \,\in \,\mathcal{A}_{\Gamma_1}}\, \int \psi \, d\mu$$
and, for each $\psi\in B_m(X)$,
$$
\Gamma_1(\psi) = \max_{\mu \,\in \,\mathcal P_a(X)}\,\Big\{{\mathfrak h}_1(\mu) + \int \psi \, d\mu \Big\}.
$$
Besides, $P(f,\Phi) \,\geqslant \, \inf_{\mu\, \in\,  \cP_f(X)} \, \cF_*(\Phi,\mu) \,>\,-\infty$
for every $(\varphi_n)_{n \, \in \, \mathbb{N}}  \in  \,\mathcal S_b$ (cf. \cite{CFH08}).
In addition, from Lemma~\ref{le:sub-sdditive} and Theorem~\ref{thm:Chong} one deduces that, for every $\Phi = (\varphi_n)_{n \, \in \, \mathbb{N}}  \in  \,\mathcal S_b$,
$$P(f,\Phi) = \Gamma_1(\psi_\Phi).$$
That is,
\begin{align*}
\sup_{\mu \,\in \,\mathcal P_f(X)} \Big\{h_\mu(f) + \mathcal F_*(\Phi,\mu) \Big\}
& = \sup_{\mu \,\in \,\mathcal P_f(X)} \Big\{h_\mu(f) + \mathcal \int \psi_\Phi  d\mu \Big\} \\
& = \max_{\mu \,\in \,\mathcal P_a(X)} \Big\{{\mathfrak h}_1(\mu) + \int \psi_\Phi  d\mu \Big\}.
\end{align*}
\end{proof}

\subsection{Application to linear cocycles and Lyapunov equilibrium states}\label{subsec:cocycles}

Non-uniform hyperbolicity is defined in terms of \textit{Lyapunov exponents}: a diffeomorphism is non-uniformly hyperbolic if it has no zero Lyapunov exponents and there exists at least one positive and one negative exponent. These numbers measure the exponential asymptotic rates of contraction or expansion along fixed directions, and became a fundamental tool to characterize chaotic dynamics. Linear cocycles turn to be a powerful mean to attest the abundance of non-uniformly hyperbolic behavior, as it allows to detach the underlying dynamics from the action it induces on a vector space. Here we are mainly interested in the existence of Lyapunov equilibrium states for linear cocycles. Some recent contributions on this topic comprise \cite{BKM, BM, FH,FK12}.

\subsubsection{\textbf{\emph{Lyapunov exponents}}}
\label{def-Lexponents}
We start recalling some preliminary notions. Let $f$ be a continuous map on a compact metric space $(X,d)$. Given an integer $\ell\geqslant 1$, a field $\mathbb K = \mathbb R$ or $\mathbb C$ and a measurable matrix-valued map $A: X \to GL(\ell,\mathbb K)$, the \emph{linear cocycle} generated by $A$ and driven by $f$ is the map
\begin{eqnarray*}
F_A: X \times \mathbb K^\ell \quad &\rightarrow& \quad  X \times \mathbb K^\ell\\
(x,v) \quad &\mapsto& \quad \left(f(x),\,A(x)v\right).
\end{eqnarray*}
Its iterates are ${F}^n_A\left(x,v\right)=\left(f^n(x),A^n(x)v\right)$, where $$A^n(x)=A(f^{n-1}(x))\cdots A(f(x))A(x)$$ for every $n \in \mathbb{N}$, $A^0(x)=x$ and, if $f$ is invertible, $$A^n(x) = A(f^{n}(x))^{-1}\cdots A(f^{-1}(x))^{-1}$$ when $n < 0$. We shall also refer to the cocycle as a pair $(f,A)$. A natural example of linear cocycle is given by the \textit{derivative cocycle} associated to a diffeomorphism $f\in \text{Diff}^{\,1}(X)$ on a compact Riemannian manifold $X$, in which case the cocycle is generated by $A(x)=Df(x)$ for each $x\in X$.

Oseledets' Theorem asserts that, under mild conditions, the Lyapunov exponents of the cocycle $(f,A)$ are well defined. More precisely, given an $f$-invariant probability measure $\mu$, if $\log\|{A}^\pm\| \in L^1(\mu)$ then
for $\mu$-almost every $x\in X$ there exist an integer $k(x)\geqslant 1$, a splitting $\mathbb K^\ell=E^{1,A}_{x}\oplus \cdots \oplus E^{k(x),A}_{x}$ and real numbers (called \emph{Lyapunov exponents})
$$\lambda_1 \left(A, \mu, x\right)> \cdots > \lambda_{k(x)}\left(A, \mu, x\right)$$
such that, for every $v\in E^{i,A}_{x} \setminus\{0\}$ and $1 \leqslant i \leqslant k(x)$,
$$A(x)\left(E^{i,A}_{x}\right) = E^{i,A}_{f(x)} \quad \quad\text{  and  } \quad \quad \lambda_i(A, \mu, x) = \lim_{n\,\to\,+ \infty}\, \dfrac{1}{n}\,\log \| A^n(x)v\|.$$
If, in addition, $\mu$ is ergodic, then $k(x)$, the Lyapunov exponents $\lambda_i(A, \mu, x)$ and the dimensions of the subspaces $E^{i,A}_{x}$ are $\mu$-almost everywhere constant, in which case one simplifies the notation by writing $\lambda_i(A,\mu)$.

\begin{remark}\label{def:Lexp}
\emph{
If $X$ is a compact Riemannian manifold, $f$ is a $C^1$-diffeomorphism on $X$ and $\mu$ is an $f$-invariant and ergodic probability measure then the Lyapunov exponents of $\mu$ are defined as the corresponding Lyapunov exponents for the derivative cocycle $A=Df$.}
\end{remark}

\subsubsection{\emph{\textbf{Singular value sub-additive potentials}}}\label{subsec:lcocs}
In what follows, $\wedge^k L$ stands for the $k$th exterior power of the linear map $L$. Assume that the linear cocycle $A: X \to GL(\ell,\mathbb K)$ is continuous. Then the Lyapunov exponents can be computed using exterior powers and a family of sub-additive sequences of potentials. More precisely, if $\mu$ is an $f$-invariant and ergodic probability measure and one takes for each $k \in \mathbb{N}$ the sub-additive sequence $\Phi_k=(\varphi_{k,n})_{n\,\in\, \mathbb{N}}$ of the continuous functions
$$x \in X \quad \mapsto \quad \varphi_{k,n}(x) = \log \| \wedge^kA^{n}(x)\|$$
then
$$\lim_{n\,\to\,+\infty} \,\frac1n \,\varphi_{k,n}(x) = \sum_{i=1}^k \,\lambda_i(A,\mu) \quad\quad \text{at $\mu$-almost every $x\in X$.}$$

Motivated by applications in dimension theory and aiming to apply their results to Falconer's singular value function and affine iterated function systems with invertible affinities, Bochi and Morris~\cite{BM} studied the following continuous parameterized family of sub-additive sequences of potentials. Given
$\vec \alpha=(\alpha_1, \alpha_2, \dots, \alpha_\ell) \in \mathbb R^\ell$ with $\alpha_1\geqslant \alpha_2 \geqslant \dots
\geqslant \alpha_\ell$, consider the sequence $\Phi_{\vec\alpha}=(\varphi_{\vec\alpha,n})_{n \,\in \,\mathbb{N}}$
defined by
$$
\varphi_{\vec\alpha,n} (x) = \log \Big( \prod_{i=1}^\ell \mathfrak s_i(A^n(x))^{\alpha_i}\Big)
$$
where $\mathfrak s_i(L)$ denotes the $i$th singular value of the linear map $L$. Then it is shown in \cite{BM} that if $\mu$ is an $f$-invariant and ergodic probability one has
\begin{equation}\label{eq:Old-VP0}
\lim_{n\,\to\,+\infty} \,\frac1n \,\varphi_{\vec\alpha,n}(x) = \sum_{i=1}^k \alpha_i \cdot \lambda_i(A,\mu) \quad \quad \text{at $\mu$-almost every $x\in X$.}
\end{equation}
Now, the variational principle for the previous family of sub-additive sequences, established by Theorem~\ref{thm:Chong}, says that, if $\htop(f)<+\infty$ and $P(f, A,\,\Phi_{\vec\alpha})$ is the pressure function defined by~\eqref{eq:P-non-additive} when $\Phi=\Phi_{\vec\alpha}$, then
\begin{equation}\label{eq:Old-VP}
P(f, A,\,\Phi_{\vec\alpha}) \,= \sup_{\mu \,\in \,\mathcal P_f(X)}\,\Big\{h_\mu(f) + \int  \sum_{i=1}^k \alpha_i \cdot \lambda_i(A,\mu,x)\,d\mu \Big\}.
\end{equation}
Invariant measures attaining the previous equality, so called \emph{Lyapunov equilibrium states}, may be hard to find. In the special context of cocycles over a full shift, the metric entropy function is upper semi-continuous and so these equilibrium states always exist. Moreover, within totally disconnected spaces, it has been shown under great generality that, for typical one-step cocycles and H\"older continuous fiber-bunched cocycles, the previous families of sequences of potentials satisfy a quasi-additivity property, and so they have unique Lyapunov equilibrium states (cf. \cite{FK12, Park}).  
More recently, Bochi and Morris (cf. \cite[Theorem~1]{BM}) proved that, in this setting, there are finitely many Lyapunov equilibrium states, these have full support, and are unique for potentials at large temperatures.

It is known that the existence of Lyapunov equilibrium states for the family $(\beta\Phi_{\vec\alpha})_{\beta\,>\,0}$ carries information on Lyapunov optimizing measures. For example, given Lyapunov equilibrium states $\mu_\beta$ with respect to $\beta\Phi_{\vec\alpha}$, $\beta > 0$, any weak$^*$ accumulation point of $(\mu_\beta)_{\beta\,>\,0}$ is an ergodic optimizing measure for the potential $\Phi_{\vec\alpha}$. Additionally, in the case of $2\times 2$ one-step dominated cocycles, Bochi and Rams~\cite{BR16} proved that Lyapunov optimizing measures always exist and, under an additional strong domination condition,
these have zero topological entropy. More information about the behavior of zero temperature limits may be read in \cite{Rz1}.

\subsection{Proof of Theorem~\ref{thm:non-additive}}

Let $f$ be a continuous map on a compact metric space $(X,d)$. For each cocycle $A \in C(X, GL(\ell,\mathbb R))$ and vector $\vec \alpha=(\alpha_1, \alpha_2, \dots, \alpha_\ell) \in \mathbb R^\ell$ with $\alpha_1\geqslant \alpha_2 \geqslant \dots \geqslant \alpha_\ell$ consider the corresponding non-additive sequence $\Phi_{\vec\alpha}$ of singular value potentials. The map
$$
\psi_{\Phi_{\vec\alpha}}(x) \,:= \,\inf_{n\, \in \,\mathbb{N}}\, \frac1n \,\varphi_{\vec\alpha,n}(x), \quad x\in X
$$
defined by Lemma~\ref{le:sub-sdditive} is a measurable, upper semi-continuous and bounded function on $X$.
Combining equations ~\eqref{eq:Old-VP0} and \eqref{eq:Old-VP} one gets
$$
P(f, A,\,\Phi_{\vec\alpha}) \,= \sup_{\mu \,\in \,\mathcal P_f(X)}\,\Big\{h_\mu(f) + \int  \psi_{\Phi_{\vec\alpha}}  \,d\mu \Big\}.
$$
Consider now the function ${\bf P}(f,\cdot )\colon \, B_m(X) \to \mathbb R$ defined by
$${\bf P}(f, \psi ) \,\,=\,\, \sup_{\mu \,\in \,\mathcal P_f(X)}\,\Big\{h_\mu(f) + \int  \psi \,d\mu \Big\}.$$
It is immediate to check that this is a pressure function. Thus, applying Theorem~\ref{thm:main}, one obtains an affine and upper semi-continuous entropy entropy function
$$\mathfrak h^{\vec\alpha} \colon \,\mathcal{P}_a(X) \,\,\to\,\, \mathbb{R} \cup \{-\infty,\, +\infty\}$$
satisfying
\begin{equation}\label{eq:equalss}
P(f, A,\,\Phi_{\vec\alpha}) \,\,=\,\, {\bf P}(f,\psi_{\Phi_{\vec\alpha}}) \,\,=\,\, \max_{\mu \,\in \,\mathcal P_a(X)}\,\Big\{\mathfrak{h}^{\vec\alpha}_\mu(f) + \int  \psi_{\Phi_{\vec\alpha}}  \,d\mu \Big\}.
\end{equation}
The set of finitely additive probability measures attaining the maximum is $f$-invariant (recall the proof of Corollary~\ref{cor:2dyn}).
Moreover, by Theorem~\ref{thm:main} there is an upper semi-continuous map ${\mathfrak h}_f: \mathcal{P}_a(X) \to \mathbb{R}$ such that, for every non-additive sequence $\Phi_{\vec\alpha}$ of singular value potentials, one has
$$P(f, A,\,\Phi_{\vec\alpha}) \,= \,\max_{\mu \,\in \,\mathcal P_a(X)}\,\Big\{{\mathfrak h}_\mu(f) + \int  \psi_{\Phi_{\vec\alpha}}\,d\mu \Big\}.$$

We are left to show that the set of finitely additive equilibrium states is non-empty for every linear cocycle in $C(X, GL(\ell,\mathbb R))$, and that the zero temperature limits of finitely additive equilibrium states have the largest value of ${\mathfrak h}^{\vec \alpha}_f$ amongst the Lyapunov optimizing measures. This is a simple consequence
of equality ~\eqref{eq:equalss} and the fact that, as
$$\sup_{\mu\,\in\, \mathcal \cP_a(X)} \, \mathfrak{h}^{\vec\alpha}_\mu(f) \leqslant \htop(f)< +\infty$$
then one has
$$
\frac1t {\bf P}(f,\,  t\Phi_{\vec\alpha}) \,=\, \max_{\mu\,\in\, \cP_a(X)}\, \Big\{\frac1t \, \mathfrak{h}^{\vec\alpha}_\mu(f) + \int \psi_{\Phi_{\vec\alpha}} \, d\mu \Big\} \quad
\underset{t\,\to \,+\infty}{\longrightarrow} \quad	\max_{\mu\,\in\, \cP_a(X)} \,\int  \psi_{\Phi_{\vec\alpha}}\, d\mu
$$
and
\begin{align*}
\frac1t P(f, A,\,t\Phi_{\vec\alpha}) \,& = \sup_{\mu \,\in \,\mathcal P_f(X)}\,\Big\{\frac1t h_\mu(f) + \int  \psi_{\Phi_{\vec\alpha}}  \,d\mu \Big\} \\
	& \underset{t\,\to\, +\infty}{\longrightarrow} \quad	
	\sup_{\mu\,\in\, \cP_f(X)} \,\int  \psi_{\Phi_{\vec\alpha}}\, d\mu  \,\,=\,\, \sup_{\mu\,\in\, \cP_f(X)} \int  \sum_{i=1}^k \alpha_i \cdot \lambda_i(A,\mu,x)\,d\mu.
\end{align*}
\smallskip

\noindent Therefore, there exists an $f$-invariant finitely additive probability measure $\mu_{\Phi_{\vec\alpha}} \in \cP_a(X)$ such that
$$\int  \psi_{\Phi_{\vec\alpha}}\, d\mu_{\Phi_{\vec\alpha}} \,\,=\,\, \sup_{\mu\,\in\, \cP_f(X)} \int  \sum_{i=1}^k \alpha_i \cdot \lambda_i(A,\mu,x)\,d\mu.$$

\iffalse

\medskip

\begin{remark}
One might use Yosida-Hewitt's decomposition (cf. \cite{HY}) to write the generalized Lyapunov maximizing measure $\mu_{\Phi_{\vec\alpha}} \in \cP_a(X)$ as the sum of an $f$-invariant probability measure $\mu_{\Phi_{\vec\alpha}, 1}$ and an $f$-invariant purely finitely additive measure $\mu_{\Phi_{\vec\alpha}, 2}$. This way, in some instances, we can express the generalized Lyapunov exponent $\int \psi_{\Phi_{\vec\alpha}} \, d\mu_{\Phi_{\vec\alpha}}$ as an almost everywhere limit. More precisely, we can apply Birkhoff's ergodic theorem to $\mu_{\Phi_{\vec\alpha}, 1}$; and, if $\mu_{\Phi_{\vec\alpha}, 2}$ is a finitely additive lower probability measure, we can also apply the ergodic theorem established in \cite[Section 2 and Theorem~3]{Mari}. Indeed, as there exists $C_{\vec\alpha} > 0$ such that
$$C_{\vec\alpha}^{-1} \,n \,\leqslant \,\varphi_{\vec\alpha,n} \,\leqslant\,  C_{\vec\alpha}  \, n \quad \quad \forall\, n \in \mathbb{N}$$
then $\lim_{n\,\to\,+\infty} \,\frac1n \,\varphi_{\vec\alpha,n}(x)$ exists for $\mu_{\Phi_{\vec\alpha}, 2}$ almost every $x \in X$ whenever $\mu_{\Phi_{\vec\alpha}, 2}$ is an $f$-invariant finitely additive lower probability measure. If, in addition, $\mu_{\Phi_{\vec\alpha}, 2}$ is ergodic, then the previous limit is almost everywhere constant and equal to $\int  \psi_{\Phi_{\vec\alpha}}\, d\mu_{\Phi_{\vec\alpha}, 2}$.
\end{remark}

\fi

\subsection*{Acknowledgments}
The authors are grateful to the anonymous referees for the apposite comments and valuable suggestions that have helped us to improve the manuscript. The authors also thank G. Iommi, B. Kloeckner, A. Lopes, J. Rivera-Letelier and M. Todd for their insightful remarks on the first version of this text.  AB was partially supported by the inner Lodz University grant 11/IDUB/DOS/2021. MC, MM and PV were partially supported by CMUP, which is financed by national funds through FCT - Funda\c c\~ao para a Ci\^encia e a Tecnologia, I.P., under the project with reference UIDB/00144/2020. MC also acknowledges financial support from the project PTDC/MAT-PUR/4048/2021. PV benefited from the grant CEECIND/03721/2017 of the Stimulus of Scientific Employment, Individual Support 2017 Call, awarded by FCT, and from the project `New trends in Lyapunov exponents' (PTDC/MAT-PUR/29126/2017).

\end{document}